\theoremstyle{thmstyleone}%
\newtheorem{theorem}{Theorem}%  meant for continuous numbers
\newtheorem{proposition}{Proposition}
\newtheorem{lemma}{Lemma}
\theoremstyle{thmstyletwo}%
\newtheorem{corollary}{Corollary}
\theoremstyle{definition} % Non-italic style for theorems, remarks, etc.
\newtheorem{remark}[theorem]{Remark}
\newtheorem{example}[theorem]{Example}
\theoremstyle{thmstylethree}%
\newtheorem{definition}{Definition}%
\newcommand{\vertiii}[1]{\left\vert\kern-0.25ex\left\vert\kern-0.25ex\left\vert #1\right\vert\kern-0.25ex\right\vert\kern-0.25ex\right\vert}
\newcommand {\dom} {{\rm dom}\,}
\newcommand {\epi} {{\rm epi}\,}
\newcommand{\al}{\alpha}
\newcommand{\be}{\beta}
\newcommand{\ga}{\gamma}
\newcommand{\la}{\lambda}
\newcommand{\eps}{\varepsilon}
\newcommand{\bx}{\bar x}
\newcommand {\R} {\mathbb R}
\newcommand {\N} {\mathbb N}
\newcommand{\norm}[1]{\left\Vert#1\right\Vert}
\newcommand{\abs}[1]{\left\vert#1\right\vert}
\newcommand {\sd} {\partial}
\newcommand{\by}{\bar y}
\newcommand {\B} {\mathbb B}
\begin{document}

\title[Primal and dual characterizations of sign-symmetric norms]{Primal and dual characterizations of sign-symmetric norms}

\author*{\fnm{Nguyen} \sur{Duy Cuong}}
\email{ndcuong@ctu.edu.vn}

\affil{\orgdiv{Department of Mathematics}, \orgname{College of Natural Sciences, Can Tho University},  \city{Can Tho City}, \country{Vietnam}}

\abstract{The paper studies primal and dual characterizations of a class of sign-symmetric norms on product vector spaces. Correspondences between these norms and a class of convex functions are established. 
Explicit formulas for the dual norm and the convex subdifferential of a given primal norm are derived. 
It is demonstrated that this class of norms is well-suited for studying properties and problems on product spaces.
As an application, we study the von Neumann-Jordan constant of norms on product spaces and extend a classical result of Clarkson from Lebesgue spaces to general normed vector spaces.}

\keywords{sign-symmetric norm, dual norm, convex subdifferential, extremal principle, von Neumann–Jordan constant}

\pacs[MSC Classification]{{\color{blue} 	49J52, 49J53, 49K40, 90C30, 90C46}}

\maketitle

\section{Introduction}
The paper is an attempt to systematically study \textit{sign-symmetric} norms on products of vector spaces.
Many important properties and problems in variational analysis involve the use of norms on product spaces.
These include, for example,  extremality and transversality properties of collections of sets, metric regularity properties of set-valued mappings, and optimization problems  on product spaces.
For a broader range of topics involving product structures, the reader is referred to the monographs \cite{RocWet98,Mor06.1,DonRoc14,Iof17}.

The most commonly used norms on product vector spaces are  the conventional maximum and sum norms, and in some cases, the $p$-norms.
The product norm construction machinery proposed in this paper provides flexibility in selecting norms on product spaces to study  properties and problems in variational analysis.
On the other hand,  the construction of norms on product spaces is itself an important topic of study.
The reader is referred to \cite{MitSaiSuz03, MitOshSai05, SomAttSat05} and the references therein for various studies related to norms on products of complex spaces.

Let $X$ be a normed space, and $n\ge 2$.
A norm $\vertiii{\cdot}$ on $X^n$ is said to be
\textit{sign-symmetric} if
	\begin{gather}\label{A1}\tag{{A1}}
		\vertiii{(x_1,\ldots,x_n)}=\vertiii{(\pm x_1,\ldots,\pm x_n)}\;\;\text{for all}\;\;(x_1,\ldots,x_n)\in X^n.
	\end{gather}
Observe that all $p$-norms are sign-symmetric. 
In this paper, we demonstrate that this class of norms provides a flexible framework for study properties and problems  on product spaces.
Several primal and dual characterizations of such norms are established, contributing to a deeper understanding of their structures in primal and dual spaces.

The concept of sign-symmetric norms on general vector spaces  can be seen as
a generalization of the notion of \textit{absolute} norms, originally introduced by Bonsall and Duncan \cite{BonDun73} for the complex space $\mathbb{C}^2$, and later extended to $\mathbb{C}^n$ by Saito, Kato, and Takahashi \cite{SaiKatTak00}.
Building on techniques developed in \cite{SaiKatTak00}, we establish correspondences between the class of sign-symmetric (strictly convex) norms on general product vector spaces and a class of (strictly) convex continuous functions on the standard simplex in $\R^n$.

Explicit formulas for the dual norm and the convex subdifferential of a  sign-symmetric primal norm are provided.
It is demonstrated that when a primal norm is associated with a convex continuous function, its dual corresponds to a convex continuous conjugate-type function of the primal function. 
Duality relations between these functions are established, leading to corresponding relations between the  primal and dual norms. 
We show that any pair of primal norms as well as their corresponding dual norms are topologically equivalent.
On the other hand, the subdifferential formula is derived through a two-step procedure. 
First, we compute the subdifferential of the norm at a vector with real-valued components.
The formula is then used to compute the subdifferential of the norm at an arbitrary point in product  spaces. 

We demonstrate that the class of sign-symmetric norms studied in this paper is well-suited for generalized separation results for collections of sets.
For many years, $p$-norms have been used when formulating  separation results.
In the recent study \cite{CuoKru24-2}, the authors introduce a set of quantitative relations between norms on product spaces that are sufficient for establishing such results.
We show that sign-symmetric norms satisfy all the required conditions.

It is well known that the norm induced by an inner product on a vector space satisfies the parallelogram identity.
The von Neumann–Jordan constant of a norm  measures how closely this norm can be characterized by an inner product \cite{JorNeu35}.
In this paper, we derive estimates (precise in some cases) of the von Neumann–Jordan constant of sign-symmetric norms and extend a classical result of Clarkson \cite{Cla37} from Lebesgue spaces to general vector spaces.

The next  Section~\ref{S3} establishes relations between  
the  class of sign-symmetric (strictly convex) norms  and a class of (strictly) convex continuous functions.
Sections \ref{S4} and \ref{S5} are dedicated to dual norm and subdifferential characterizations of sign-symmetric norms.
We demonstrate in Section~\ref{S6} that the class of sign-symmetric norms is particularly well-suited for formulating and proving generalized separation results for collections of sets.
As an application, we study in Section~\ref{S7} the von Neumann-Jordan constant of sign-symmetric norms and revisit a  result of Clarkson.
The final Section~\ref{S8} summarizes the main contributions of the paper and outlines potential directions for future research.

\subsubsection*{Preliminaries}
Our basic notation is standard, see, e.g., \cite{Mor06.1,Iof17,RocWet98,DonRoc14} .
Throughout the paper, if not explicitly stated otherwise, $X$ is a vector space equipped with a norm $\|\cdot\|$.
Given an integer $n\ge 2$, the product space $X^n$ is assumed to be endowed with a norm $\vertiii{\cdot}$.
The topological dual of a vector space $X$ is denoted by $X^*$, while $\langle\cdot,\cdot\rangle$ denotes the bilinear form defining the pairing between the two spaces.
We keep the same notations $\|\cdot\|$ and $\vertiii{\cdot}$ (possibly with a subscript indicating the space)  for the corresponding dual norms on $X^*$ and $(X^n)^*$.
We write $0_X$ to emphasize that the zero vector belongs to the vector space $X$.
The notation $\mathbb{S}^n_X:=\mathbb{S}_X\times\ldots\times \mathbb{S}_X$ stands for the Cartesian product of $n$ unit spheres.
Symbols $\B_{X}$ and $B_\rho(\bx)$ denote the open unit ball and open ball with centre $\bx$ and radius $\rho$, respectively, while $\overline\B_X$ and $\overline B_\rho(\bx)$  denote the corresponding closed balls.
We write $\R$ and $\N$ to denote the sets of all real numbers and all positive integers, respectively.
The notation $\infty$ represents $+\infty$ in the extended real line with the convention that $\frac{1}{\infty}=0$, while
the notation
$\{x^k\}\subset\Omega$ refers to a sequence of points $x^k\in\Omega$ for all $k\in\N$.
The sign function is defined by
$\text{sgn}(x)=1$ if $x>0$, $\text{sgn}(x)=0$ if $x=0$, and
$\text{sgn}(x)=-1$ if $x<0$.

Given a subset $\Omega$ of a normed space $X$ and a point $\bx\in \Omega$, the set
	\begin{gather*}%\label{NC}
		N_{\Omega}^F(\bx):= \Big\{x^\ast\in X^\ast\mid
		\limsup_{\Omega\ni x{\rightarrow}\bar x,\;x\ne\bx} \frac {\langle x^\ast,x-\bx\rangle}
		{\|x-\bx\|} \le 0 \Big\}
	\end{gather*}
is the Fr\'echet normal cone to $\Omega$ at $\bx$.
If $\Omega$ is a convex set, it reduces to the normal cone in the sense of convex analysis:
$N_{\Omega}(\bx):= \left\{x^*\in X^*\mid \langle x^*,x-\bx \rangle \leq 0\;\;\text{for all}\;\;	x\in \Omega\right\}.$
For a function $f:X\to\R\cup\{\infty\}$ on a normed space $X$,
its domain and epigraph are defined,
respectively, by
$\dom f:=\{x \in X\mid f(x) <\infty\}$
and
$\epi f:=\{(x,\alpha) \in X \times \mathbb{R}\mid f(x) \le \alpha\}$.
The Fr\'echet subdifferential of $f$ at $\bar x\in\dom f$
is defined by
	\begin{gather*}
		\sd^F f(\bar x):= \left\{x^* \in X^*\mid (x^*,-1) \in N^F_{\epi f}(\bar x,f(\bar x))\right\}.
	\end{gather*}
If $f$ is convex, it
reduces to the subdifferential in the sense of convex analysis.
In this case, we simply write $\partial{f}(\bx)$.
We set $N_{\Omega}^F(\bx):=\emptyset$ if $\bx\notin \Omega$ and $\partial^F{f}(\bx):=\emptyset$ if $\bx\notin\dom f$.
For a norm $\|\cdot\|$ on a vector space $X$,
it is well known \cite[Example~3.36]{MorNam22} that
	\begin{equation}\label{P4.12}
		\partial\|\cdot\|(x) = 
		\begin{cases}
			\left\{x^*\in X^*\mid \|x^*\| =1\;\;\text{and}\;\;\langle x^*,x\rangle=\|x\|\right\} & \text{if } x \ne 0_{X},\\
			\overline\B_{X^*}  & \text{otherwise }.
		\end{cases}
	\end{equation}
A function $f$ is said to be \textit{strictly convex} if $f\left(\frac{x+x'}{2}\right)<\frac{f(x)+f(x')}{2}$
for any distinct points $x,x'\in\dom f$.
A norm $\|\cdot\|$ on a vector space $X$ is
\textit{strictly convex }if $\norm{{x+x'}}<2$ for any distinct points
$x,x'\in\mathbb{S}_X$.

\section{Primal characterizations of sign-symmetric norms}\label{S3}
In this section, we establish a correspondence between a class of sign-symmetric (strictly convex) norms and a class of (strictly) convex continuous functions.

Throughout this paper, we also assume  that a sign-symmetric norm $\vertiii{\cdot}$ on $X^n$ and the given norm $\|\cdot\|$ on $X$
satisfy the following compatibility condition:
	\begin{gather}\label{A2}\tag{{A2}}
		\vertiii{(0_X,\ldots,0_X,v,0_X,\ldots,0_X)}=\|v\|\;\;\text{for all}\;\;v\in X,
	\end{gather}	
where $v$ is in the $i$th position for $i=1,\ldots,n$.
We denote by ${\rm \textbf{N}}_{X^n}$ the family of all norms on $X^n$ that satisfy conditions \eqref{A1} and \eqref{A2}, and by ${\rm \textbf{N}}^{\rm{sc}}_{X^n}$ the subclass of ${\rm \textbf{N}}_{X^n}$ consisting of all strictly convex norms satisfying  \eqref{A1} and \eqref{A2}.

\begin{remark}
	\begin{enumerate}[\rm (i)]
		\item	
		For a given $p\in[1,\infty]$,  the $p$-norm on 
		$X^n$, defined by  
			\begin{equation}\label{pnorm}
				\vertiii{x}_{p}
				:= \begin{cases}
					\left(\|x_1\|^p+\ldots\|x_n\|^p\right)^{\frac{1}{p}}  & \text{if } p\in[1,\infty),\\	
					\max\{\|x_1\|,\ldots,\|x_n\|\} & \text{if } p=\infty,
				\end{cases} 
			\end{equation}
		for all $x:=(x_1,\ldots,x_n)\in X^n$, belongs to ${\rm \textbf{N}}_{X^n}$.
		\item 
		Recall from \cite{SaiKatTak00} that a norm $\vertiii{\cdot}$ on the complex space $\mathbb{C}^n$ is called \textit{absolute} if $\vertiii{(x_1,\ldots,x_n)}=\vertiii{(\abs{x_1},\ldots,\abs{x_n})}$
		for all $(x_1,\ldots,x_n)\in\mathbb{C}^n$,
		where $|\cdot|$ is the standard norm on $\mathbb{C}$.
		It is obvious that every absolute norm on $\mathbb{C}^n$  is sign-symmetric. 
		The converse implication does not generally hold (Example~\ref{E3.2}).
		A norm $\vertiii{\cdot}$ on $\mathbb{C}^n$ is called \textit{normalized} \cite{SaiKatTak00}  if 
		$\vertiii{\mathbf{e}_1}=\ldots=\vertiii{\mathbf{e}_n}=1$, where $\mathbf{e}_i:=(0,\ldots,0,1,0,\ldots,0)$ with the value $1$ in the $i$th position  $(i=1,\ldots,n)$. 
		Thus, condition \eqref{A2} can be seen as a vector counterpart of this normalization condition.
	\end{enumerate}
\end{remark}	

The next example shows that a sign-symmetric norm on $\mathbb{C}^2$ is not necessarily absolute.
\begin{example}\label{E3.2}
	Consider the norm $\vertiii{(x_1,x_2)}:=\frac{\abs{x_1-x_2}+\abs{x_1+x_2}}{2}	$ for all $(x_1,x_2)\in\mathbb{C}^2.$
	It is clear that $\vertiii{\cdot}\in {\rm \textbf{N}}_{\mathbb{C}^2}$.
	However, this norm is not absolute as $\vertiii{(x_1,x_2)}=2\ne\sqrt{2}=\vertiii{(\abs{x_1},\abs{x_2})}$, where $x_1:=1+i$ and $x_2:=1-i$.
\end{example}	

The following proposition establishes quantitative relations between norms in $\textbf{{\rm \textbf{N}}}_{X^n}$.
\begin{proposition}\label{L3.2}
	Let $\vertiii{\cdot}\in \textbf{{\rm \textbf{N}}}_{X^n}$ and $x:=(x_1,\ldots,x_n)\in X^n$.
	Then
	\begin{enumerate}[\rm (i)]
		\item\label{L3.2-1}
		$\max\{\vertiii{(0_X,x_2,\ldots,x_n)},\ldots,\vertiii{(x_1,\ldots,x_{n-1},0_X)}\}\le  \vertiii{x}$;
		\item\label{L3.2-0}
		$\vertiii{(x_1,0_X,\ldots,0_X)}\le \vertiii{(x_1,x_2,0_X\ldots,0_X)}\le\ldots\le\vertiii{(x_1,\ldots,x_{n-1},0_X)}\le\vertiii{x}$;
		\item\label{L3.2-2}
		$\vertiii{x}_{\infty}\le \vertiii{x}\le\vertiii{x}_1  \le n\cdot \vertiii{x}_\infty$, where $\vertiii{\cdot}_1$ and $\vertiii{\cdot}_\infty$ are given by \eqref{pnorm}. 
	\end{enumerate}
\end{proposition}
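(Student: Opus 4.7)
The whole proposition pivots on part (i); once it is in hand, parts (ii) and (iii) follow by straightforward iteration together with the triangle inequality and condition \eqref{A2}.

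For part (i), the trick is to use the sign-symmetry \eqref{A1} as an averaging tool. To bound $\vertiii{(0_X,x_2,\ldots,x_n)}$ by $\vertiii{x}$, I would write
\[
(0_X,x_2,\ldots,x_n)=\tfrac12\bigl[(x_1,x_2,\ldots,x_n)+(-x_1,x_2,\ldots,x_n)\bigr],
\]
apply the triangle inequality, and then use \eqref{A1} to identify $\vertiii{(-x_1,x_2,\ldots,x_n)}=\vertiii{(x_1,x_2,\ldots,x_n)}$. The same argument, flipping the sign of the $i$th coordinate rather than the first, shows that zeroing out any single coordinate does not increase the norm. Taking the maximum over $i$ gives (i).

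For part (ii), I would simply iterate the single-coordinate observation proved in (i). Starting from $\vertiii{x}$, zeroing out the $n$th coordinate gives $\vertiii{(x_1,\ldots,x_{n-1},0_X)}\le\vertiii{x}$; zeroing out the $(n-1)$th coordinate of that vector gives the next inequality in the chain, and so on down to $\vertiii{(x_1,0_X,\ldots,0_X)}$.

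For part (iii), the lower bound $\vertiii{x}_\infty\le\vertiii{x}$ follows by combining (i)–(ii) (applied so as to retain only the $i$th coordinate of $x$) with \eqref{A2}, which identifies $\vertiii{(0_X,\ldots,0_X,x_i,0_X,\ldots,0_X)}$ with $\|x_i\|$; taking the maximum over $i$ completes this half. The middle bound $\vertiii{x}\le\vertiii{x}_1$ is a one-line decomposition
\[
(x_1,\ldots,x_n)=\sum_{i=1}^n (0_X,\ldots,0_X,x_i,0_X,\ldots,0_X),
\]
followed by the triangle inequality and \eqref{A2}. The final bound $\vertiii{x}_1\le n\,\vertiii{x}_\infty$ is the trivial inequality $\sum_i\|x_i\|\le n\max_i\|x_i\|$.

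The only nontrivial step is the symmetrization argument in (i); everything else is routine consequence of that observation, condition \eqref{A2}, and the norm axioms. I do not anticipate any real obstacle, since sign-symmetry is exactly designed so that the averaging trick succeeds.
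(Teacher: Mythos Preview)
Your proposal is correct and follows essentially the same route as the paper: the symmetrization/averaging argument via \eqref{A1} for part (i), iteration for (ii), and then \eqref{A2} plus the triangle inequality for (iii). There is nothing to add.
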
	
\begin{proof}
	%Assume that $\vertiii{\cdot}\in \textbf{{\rm \textbf{N}}}_{X^n}$ and  $x:=(x_1,\ldots,x_n)\in X^n$.
	By \eqref{A1},
	$\vertiii{(0_X,x_2,\ldots,x_n)}\le \frac{1}{2}\left(\vertiii{x}+\vertiii{(-x_1,x_2,\ldots,x_n)}\right)=\vertiii{x}.$
	The other inequalities in \eqref{L3.2-1} and  \eqref{L3.2-0}  are proved similarly.
	By \eqref{A1} and \eqref{A2}, 
		\begin{align*}
			\vertiii{x}_{\infty}
			=\max_{1\le i\le n}\|x_i\|
			\le\vertiii{x}
			\le\sum_{i=1}^{n}\vertiii{(0_X,\ldots,0_X,x_i,0_X,\ldots,0_X)}
			=\vertiii{\cdot}_1\le n\cdot\vertiii{\cdot}_\infty.
		\end{align*}
This proves \eqref{L3.2-2}.
The proof is complete.
\end{proof}	
We are going to study relations between the class of sign-symmetric norms and a class of convex continuous functions.
Consider a convex and compact   subset of $\R^n$ given by
	\begin{gather*}
		\Omega_n:=\{(t_1,\ldots,t_{n})\in\R^{n}\mid t_1,\ldots,t_n\ge 0
		\;\;\text{and}\;\; t_1+\ldots+t_{n}=1\}.
	\end{gather*}
Let $\pmb{\Psi}_n$ denote the class of all convex continuous  functions $\psi:\Omega_n\to\R$ satisfying the following conditions:
	\begin{gather}\label{S3.2-1}\tag{B1}
		\psi(\mathbf{e}_1)=\ldots=\psi(\mathbf{e}_n)=1,\\\label{S3.2-2} \tag{B2}
		\psi(t)\ge (1-t_i)\cdot\psi\left(\dfrac{t_1}{1-t_i},\ldots,\dfrac{t_{i-1}}{1-t_i},0,\dfrac{t_{i+1}}{1-t_i},\ldots,\dfrac{t_{n}}{1-t_i}\right)
		\;\;(i=1,\ldots,n)
	\end{gather}	
for all $t:=(t_1,\ldots,t_n)\in\Omega^\circ_n:=\{(t_1,\ldots,t_n)\in\Omega_n\mid t_1,\ldots,t_n<1\}$.
Denote by $\pmb{\Psi}^{\rm{sc}}_n$ the subclass of $\pmb{\Psi}_n$ consisting of all strictly convex functions satisfying \eqref{S3.2-1} and \eqref{S3.2-2}.

\begin{remark}\label{R3}
	\begin{enumerate}[\rm (i)]
		\item\label{R3.1}
		Consider a subset of $\R^{n}$ given by
			\begin{gather*}
				\Omega'_{n}:=\{(t_1,\ldots,t_{n})\in\R^{n}\mid t_1,\ldots,t_{n}\ge 0\;\;\text{and}\;\; t_1+\ldots+t_{n}\le1\}.
			\end{gather*}
		In \cite{SaiKatTak00}, the authors study  a class $\pmb{\Psi}'_{n-1}$ consisting of all convex continuous functions $\varphi:\Omega'_{n-1}\to \R$ satisfying the following conditions:
			\begin{gather}\label{S3.2-4}\tag{B1$^\prime$}
				\varphi(0,0,\ldots,0)=\varphi(1,0,\ldots,0)=\ldots=\varphi(0,\ldots,0,1)=1,\\\label{S3.2-5}\tag{B2$^\prime$}
				\varphi(t)\ge (1-t_i)\cdot\varphi\left(\dfrac{t_1}{1-t_i},\ldots,\dfrac{t_{i-1}}{1-t_i},0,\dfrac{t_{i+1}}{1-t_i},\ldots,\dfrac{t_{n-1}}{1-t_i}\right)\;(i=1,\ldots,n-1),\\\label{S3.2-6}\tag{B3$^\prime$}
				\text{and}\;\;\varphi(t)\ge \left(1-t_n\right)\cdot\varphi\left(\dfrac{t_1}{1-t_n},\ldots,\ldots,\dfrac{t_{n-1}}{1-t_n}\right)
			\end{gather}	
		for all $t:=(t_1,\ldots,t_{n-1})\in \Omega'_{n-1}$ with $t_1,\ldots,t_{n-1}<1$ in \eqref{S3.2-5} and $t_n:=\sum_{i=1}^{n-1}t_i<1$ in \eqref{S3.2-6}.
		The class  $\pmb{\Psi}'_{n-1}$ is employed to characterize {normalized} and {absolute} norms on $\mathbb{C}^n$; see, e.g., \cite[Theorem~3.4]{SaiKatTak00}.
		The special case  $\pmb{\Psi}'_1$ was originally introduced in  \cite{BonDun73} for the study of numerical ranges in $\mathbb{C}^2$.
		The quasi-symmetric conditions specified in \eqref{S3.2-5} and \eqref{S3.2-6} result in the corresponding quasi-symmetric representation of the norm on $\mathbb{C}^n$.
		In this paper, using the class $\pmb{\Psi}_n$, we establish a full symmetric representation for norms on general vector spaces.
		\item\label{R3.2}
		The families $\pmb{\Psi}_n$ and $\pmb{\Psi}'_{n-1}$ are in a one-to-one correspondence.
		Indeed, given a $\psi\in \pmb{\Psi}_n$, the function
		$\varphi'(t_1,\ldots,t_{n-1}):=\psi(t_1,\ldots,t_{n-1},1-\sum_{i=1}^{n-1}t_i)$ for all $(t_1,\ldots,t_{n-1})\in\Omega'_{n-1}$ belongs to $\pmb{\Psi}'_{n-1}$.
		On the other hand, given a $\varphi\in \pmb{\Psi}'_{n-1}$, the function
		$\psi(t_1,\ldots,t_n):=\varphi(t_1,\ldots,t_{n-1})$ for all $(t_1,\ldots,t_n)\in\Omega_n$ belongs to $\pmb{\Psi}_n$.
	\end{enumerate}
\end{remark}	

The next result is of importance.
It is a modified version of \cite[Lemma~3.2]{SaiKatTak00}.
\begin{proposition}\label{P5.1}
	Let $\psi\in\pmb{\Psi}_n$.
	Then $\frac{1}{n}\le \max\{t_1,\ldots,t_n\}\le\psi(t)\le 1$
	for all $t:=(t_1,\ldots,t_n)\in\Omega_n$.
\end{proposition}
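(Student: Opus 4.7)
The statement contains three inequalities of quite different character, and the plan is to handle them separately, in increasing order of difficulty.

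The left-most bound $\frac{1}{n}\le \max\{t_1,\ldots,t_n\}$ is a purely combinatorial fact about $\Omega_n$: since $t_1,\ldots,t_n\ge0$ and $\sum_i t_i=1$, the largest coordinate must be at least $1/n$. No use of $\psi$ is needed. For the upper bound $\psi(t)\le 1$, I would write $t$ as the convex combination $t=\sum_{i=1}^n t_i\mathbf{e}_i$ of the vertices of $\Omega_n$. Convexity of $\psi$ together with the normalization \eqref{S3.2-1} then gives $\psi(t)\le \sum_{i=1}^n t_i\psi(\mathbf{e}_i)=\sum_{i=1}^n t_i=1$.

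The middle inequality $\max_i t_i\le\psi(t)$ is the heart of the proposition. Fix an index $i^*$ at which the maximum is attained. If $t_{i^*}=1$ then $t=\mathbf{e}_{i^*}$ and \eqref{S3.2-1} gives equality directly, so I assume $t_{i^*}<1$, which forces every $t_j<1$, placing $t$ in $\Omega_n^\circ$. The idea is to use \eqref{S3.2-2} iteratively to ``zero out'' each non-maximal coordinate one at a time. Letting $J=\{j\ne i^*: t_j>0\}$, I pick $j_1\in J$ and apply \eqref{S3.2-2} with $i=j_1$, obtaining
\begin{equation*}
\psi(t)\ge (1-t_{j_1})\,\psi(t^{(1)}),\qquad t^{(1)}_k=\tfrac{t_k}{1-t_{j_1}}\ (k\ne j_1),\ t^{(1)}_{j_1}=0.
\end{equation*}
The vector $t^{(1)}$ still lies in $\Omega_n$, has $i^*$ as a maximizing coordinate, and has one fewer nonzero off-$i^*$ component. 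Iterating, after applying \eqref{S3.2-2} once for each $j\in J$ (checking at each step that the updated vector is in $\Omega_n^\circ$, or else has already collapsed to $\mathbf{e}_{i^*}$), the telescoping products of the factors $(1-t^{(k-1)}_{j_k})$ simplify to $1-\sum_{j\in J}t_j=t_{i^*}$, and the final vector is $\mathbf{e}_{i^*}$. Using \eqref{S3.2-1} to evaluate $\psi(\mathbf{e}_{i^*})=1$ yields $\psi(t)\ge t_{i^*}$, as desired.

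The main obstacle is a bookkeeping one in the iteration, specifically the need to verify that each intermediate rescaled vector is admissible for \eqref{S3.2-2}. The potential failure mode is that some $t^{(k)}_{i^*}$ reaches $1$ before the iteration finishes; but this can only happen when all remaining off-$i^*$ coordinates are already zero, i.e., when the vector has become $\mathbf{e}_{i^*}$, and in that case the proof terminates using \eqref{S3.2-1} directly. Once this is handled cleanly, the telescoping of the factors $(1-t^{(k-1)}_{j_k})$ into $t_{i^*}$ is a short algebraic computation.
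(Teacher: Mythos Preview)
Your proposal is correct and follows essentially the same approach as the paper: the first inequality is trivial, the upper bound comes from convexity and \eqref{S3.2-1}, and the middle inequality is obtained by iteratively applying \eqref{S3.2-2} to zero out the non-maximal coordinates, with the telescoping product collapsing to $t_{i^*}$. The paper simply zeros out all coordinates $t_1,\ldots,t_n$ in index order (skipping $i_0$) rather than restricting to the nonzero ones, and it leaves the admissibility of the intermediate vectors implicit, whereas you address that bookkeeping explicitly; these are cosmetic differences, not substantive ones.
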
	
\vspace{-0.3\baselineskip}
\begin{proof}
	Let $\psi\in\pmb{\Psi}_n$ and $t:=(t_1,\ldots,t_n)\in\Omega_n$.
	The first inequality is obvious.
	The convexity of $\psi$ implies that
	$\psi(t)\le\sum_{i=1}^{n} t_i\psi(\mathbf{e}_i)=\sum_{i=1}^{n}t_i=1.
	$
	Let $t_{i_0}:=\max\{t_1,\ldots,t_n\}>0$ for some $i_0\in\{1,\ldots,n\}$.
	By \eqref{S3.2-2}, 
	\begin{align*}
		\psi(t)
		&\ge(1-t_1)\cdot\psi\left(0,\dfrac{t_2}{1-t_1},\ldots,\dfrac{t_{i_0}}{1-t_1},\ldots,\dfrac{t_{n}}{1-t_1}\right)\\
		&\ge (1-t_1-t_2)\cdot\psi\left(0,0,\dfrac{t_3}{1-t_1-t_2},\ldots,\dfrac{t_{i_0}}{1-t_1-t_2},\ldots,\dfrac{t_{n}}{1-t_1-t_2}\right)\\
		&\;\;\vdots\\
		&\ge \left(1-\sum_{j=1}^{i_0-1}t_j\right)\cdot\psi\left(0,\ldots,0,\dfrac{t_{i_0}}{1-\sum_{j=1}^{i_0-1}t_j},\ldots,\dfrac{t_{n}}{1-\sum_{j=1}^{i_0-1}t_j}\right)\\
		&\ge \left(1-\sum_{i_0\ne j=1}^{i_0+1}t_j\right)\cdot\psi\left(0,\ldots,0,\dfrac{t_{i_0}}{1-\sum_{i_0\ne j=1}^{i_0+1}t_j},0,\dfrac{t_{i_0+2}}{\sum_{i_0\ne j=1}^{i_0+1}t_j}\ldots,\dfrac{t_{n}}{1-\sum_{i_0\ne j=1}^{i_0+1}t_j}\right)\\
		&\;\;\vdots\\
		&\ge \left(1-\sum_{i_0\ne j=1}^{n}t_j\right)\cdot\psi\left(0,\ldots,0,\dfrac{t_{i_0}}{1-\sum_{i_0\ne j=1}^{n}t_j},0,\ldots,0\right)
		=t_{i_0}\cdot\psi(\mathbf{e}_{i_0}).
	\end{align*}	
	In view of \eqref{S3.2-1}, we have $\psi(t)\ge t_{i_0}$.
	This completes the proof.
\end{proof}	

\begin{proposition}\label{L5.2}
	Let $\psi\in\pmb{\Psi}_n$.
	The following assertions hold:
	\begin{enumerate}[\rm (i)] 
		\item\label{L5.2.2}
		for any $t:=(t_1,\ldots,t_{n})$ and $t':=(t'_1,\ldots,t'_{n})$ in $\Omega_n$ satisfying
			\begin{gather}\label{L5.2-11}
				\dfrac{t'_1}{t_1}=\ldots=\dfrac{t'_{i-1}}{t_{i-1}}=\dfrac{1-t'_i}{1-t_i}=\dfrac{t'_{i+1}}{t_{i+1}}=\ldots=\dfrac{t'_{n}}{t_{n}}=\gamma
			\end{gather}	
		for some $\gamma\ge1$, one has
			\begin{gather}\label{L5.2-1}
				\dfrac{\psi(t')}{1-t'_i}\le\dfrac{\psi(t)}{1-t_i}\;\;
				\text{and}\;\;\dfrac{\psi(t')}{t'_j}\le\dfrac{\psi(t)}{t_j}\;\text{for all}\; j\ne i.
			\end{gather}	
		\item\label{L5.2.3}
		if $\psi\in \pmb{\Psi}^{\rm{sc}}_n$ and condition \eqref{L5.2-11} holds for some $\gamma>1$, then inequalities \eqref{L5.2-1} are strict.
	\end{enumerate}
\end{proposition}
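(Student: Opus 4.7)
The plan is to reduce both inequalities in \eqref{L5.2-1} to a single one-dimensional monotonicity statement for the restriction of $\psi$ to the segment joining $\mathbf{e}_i$ and the ``face point''
$s:=\bigl(\tfrac{t_1}{1-t_i},\ldots,\tfrac{t_{i-1}}{1-t_i},0,\tfrac{t_{i+1}}{1-t_i},\ldots,\tfrac{t_n}{1-t_i}\bigr).$
The key observation is that \eqref{L5.2-11} forces $t$ and $t'$ to share this same face point: the ratios imply $t'_j/(1-t'_i)=t_j/(1-t_i)=s_j$ for every $j\ne i$. Consequently, parametrizing the segment by $r(\sigma):=\sigma s+(1-\sigma)\mathbf{e}_i$, $\sigma\in[0,1]$, one has $t=r(u)$ and $t'=r(u')$ with $u:=1-t_i$ and $u':=1-t'_i=\gamma u$, so $0<u\le u'\le 1$. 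The case $\gamma=1$ is trivial, so I assume $\gamma>1$. Since $t_j=us_j$ and $t'_j=u's_j$ for $j\ne i$, both inequalities in \eqref{L5.2-1} collapse to the single claim $\Phi(u')/u'\le\Phi(u)/u$, where $\Phi(\sigma):=\psi(r(\sigma))$.

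To establish this, I would show that $\Phi(\sigma)/\sigma$ is non-increasing on $(0,1]$. The function $\Phi$ is convex as a restriction of $\psi$ to a segment, and $\Phi(0)=\psi(\mathbf{e}_i)=1$ by \eqref{S3.2-1}. Applying \eqref{S3.2-2} at $r(\sigma)\in\Omega_n^\circ$ for $\sigma\in(0,1)$ (whose $i$-th coordinate is $1-\sigma$) yields $\Phi(\sigma)\ge\sigma\Phi(1)$, equivalently $\Phi(1)\le\Phi(\sigma)/\sigma$. For $0<\sigma_1<\sigma_2<1$, writing $\sigma_2=\alpha\sigma_1+(1-\alpha)\cdot 1$ with $\alpha=(1-\sigma_2)/(1-\sigma_1)\in(0,1)$ and combining convexity of $\Phi$ with the preceding bound gives
\[
\Phi(\sigma_2)\le\alpha\Phi(\sigma_1)+(1-\alpha)\Phi(1)\le\alpha\Phi(\sigma_1)+(1-\alpha)\frac{\Phi(\sigma_1)}{\sigma_1}=\frac{\sigma_2}{\sigma_1}\Phi(\sigma_1),
\]
so $\Phi(\sigma_2)/\sigma_2\le\Phi(\sigma_1)/\sigma_1$; the boundary case $\sigma_2=1$ is directly the bound from \eqref{S3.2-2}. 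Specialising to $\sigma_1=u$ and $\sigma_2=u'$ then yields part (i).

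For part (ii), assume $\psi\in\pmb{\Psi}^{\rm{sc}}_n$ and $\gamma>1$, so $u<u'$. If $u'<1$, then $r(u)$ and $r(1)=s$ are distinct points of $\Omega_n$ and $\alpha\in(0,1)$, so strict convexity of $\psi$ turns the first inequality in the chain above into a strict one, giving $\Phi(u')/u'<\Phi(u)/u$. If $u'=1$, I would insert an intermediate point $\sigma\in(u,1)$ and chain the previous (already strict) case with \eqref{S3.2-2}:
\[
\frac{\Phi(u)}{u}>\frac{\Phi(\sigma)}{\sigma}\ge\Phi(1)=\frac{\Phi(u')}{u'}.
\]
The main subtlety is precisely this boundary case $u'=1$: since \eqref{S3.2-2} remains only a non-strict bound even under strict convexity of $\psi$, one cannot obtain strictness directly and must pass through an intermediate point on the segment.
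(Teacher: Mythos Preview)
Your argument is correct and is essentially the same as the paper's: your convex combination $r(\sigma_2)=\alpha\,r(\sigma_1)+(1-\alpha)\,r(1)$ with $\alpha=(1-\sigma_2)/(1-\sigma_1)$ is exactly the decomposition $t'=\tfrac{t'_i}{t_i}\,t+\tfrac{(1-t_i)(\gamma-1)}{t_i}\,s$ that the paper writes down directly, and both of you then invoke \eqref{S3.2-2} for the remaining term. Your one-dimensional packaging via $\Phi(\sigma)/\sigma$ is a clean way to phrase it, and your separate treatment of the boundary case $u'=1$ (i.e.\ $t'_i=0$) is in fact more careful than the paper, which asserts that \eqref{L5.2-14} is strict whenever $\gamma>1$ without noting that the combination degenerates when $t'_i=0$.
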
	

\begin{proof}
	Let $\psi\in\pmb{\Psi}_n$ and $t:=(t_1,\ldots,t_{n}),t':=(t'_1,\ldots,t_{n})\in\Omega_n$ satisfy condition \eqref{L5.2-11} for some $\gamma\ge1$.
	Then
	$t'=\frac{(1-t_i)(\ga-1)}{t_i}\cdot\frac{t-\nu}{1-t_i}+\frac{1-\ga(1-t_i)}{t_i}\cdot t$, where
	$\nu:=(0,\ldots,0,t_i,0,\ldots,0)$ with $t_i$ being in the $i$th position.
	Observe that $\frac{(1-t_i)(\ga-1)}{t_i}+\frac{1-\ga(1-t_i)}{t_i}=1$.
	By the convexity of $\psi$, we have
	\begin{gather}\label{L5.2-14}
		\psi(t')\le\dfrac{(1-t_i)(\ga-1)}{t_i}\cdot\psi\left(\dfrac{t-\nu}{1-t_i}\right)+\dfrac{1-\ga(1-t_i)}{t_i}\cdot \psi(t).	
	\end{gather}	
	By \eqref{L5.2-11} and \eqref{L5.2-14},
	\begin{align*}
		\dfrac{\psi(t)}{1-t_i}-	\dfrac{\psi(t')}{1-t'_i}
		= \dfrac{\psi(t)}{1-t_i}-\dfrac{\psi(t')}{\ga (1-t_i)}
		\ge \dfrac{\gamma-1}{\ga t_i(1-t_i)}\cdot \left[\psi(t)-(1-t_i)\psi\left(\dfrac{t-\nu}{1-t_i}\right)\right]\ge0,
	\end{align*}	
	where the last inequality follows from \eqref{S3.2-2}.
	Thus,  inequalities \eqref{L5.2-1} hold.
	If $\psi\in \pmb{\Psi}^{\rm{sc}}_n$ and condition \eqref{L5.2-11} holds for some $\gamma>1$,
	then inequality \eqref{L5.2-14} is strict.
	Consequently,  inequalities \eqref{L5.2-1} are strict.
\end{proof}

\begin{remark}
	It is claimed in \cite[p. 37]{BonDun73} that 
	the functions $\frac{\varphi(t)}{t}$ and $\frac{\varphi(t)}{1-t}$ are decreasing and increasing on $(0,1]$ and $[0,1)$, respectively, for any $\varphi\in \pmb{\Psi}'_1$.
	These facts are direct consequences of Proposition~\ref{L5.2}\;\eqref{L5.2.2}.
\end{remark}	

\begin{lemma}\label{C5.3}
	Let $0<\al_i\le\be_i$ $(i=1,\ldots,n)$, $\al:=\sum_{i=1}^{n}\al_i$ and 
	$\be:=\sum_{i=1}^{n}\be_i$.
	\begin{enumerate}[\rm (i)]
		\item\label{C5.3.1}
		If $\psi\in\pmb{\Psi}_n$, then
			\begin{gather}\label{L5.2-12}
				\al \cdot\psi\left(\dfrac{\al_1}{\al},\ldots,\dfrac{\al_{n}}{\al}\right)\le \be \cdot\psi\left(\dfrac{\be_1}{\be},\ldots,\dfrac{\be_{n}}{\be}\right).
			\end{gather}		
		\item\label{C5.3.2}
		If $\psi\in \pmb{\Psi}^{\rm{sc}}_n$ and $\al_i<\be_i$ for some $i\in\{1,\ldots,n\}$, then 
		inequality \eqref{L5.2-12} is strict.
	\end{enumerate}
\end{lemma}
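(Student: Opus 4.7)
The plan is to deduce the inequality by telescoping along a chain that interpolates between $(\alpha_1,\ldots,\alpha_n)$ and $(\beta_1,\ldots,\beta_n)$ one coordinate at a time, and then invoke Proposition~\ref{L5.2} at each step with the two points interchanged so as to land in the regime $\gamma\ge 1$ required there.

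For $i=0,1,\ldots,n$, define $\gamma^{(i)}:=(\beta_1,\ldots,\beta_i,\alpha_{i+1},\ldots,\alpha_n)$, so $\gamma^{(0)}=(\alpha_1,\ldots,\alpha_n)$ and $\gamma^{(n)}=(\beta_1,\ldots,\beta_n)$. Let $s^{(i)}$ denote the sum of the coordinates of $\gamma^{(i)}$, and set $F(v):=\left(\sum_j v_j\right)\cdot\psi\bigl(v_1/\sum_j v_j,\ldots,v_n/\sum_j v_j\bigr)$ for any $v$ with strictly positive entries. It suffices to show $F(\gamma^{(i-1)})\le F(\gamma^{(i)})$ for each $i=1,\ldots,n$, with strict inequality whenever $\alpha_i<\beta_i$; chaining gives \eqref{L5.2-12} and the strict version in~\eqref{C5.3.2}.

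Fix $i$ and assume $\alpha_i\le\beta_i$. Set $s:=s^{(i-1)}$, $s':=s^{(i)}=s+(\beta_i-\alpha_i)$, $t:=\gamma^{(i-1)}/s$, and $t':=\gamma^{(i)}/s'$. Both $t,t'$ lie in $\Omega_n$ with $t_j,t'_j>0$ for $j\ne i$ (since $\alpha_j>0$) and $t_i,t'_i<1$. A direct computation yields $t_j/t'_j=s'/s$ for every $j\ne i$, and, using $1-t_i=(s-\gamma^{(i-1)}_i)/s$ and $1-t'_i=(s-\gamma^{(i-1)}_i)/s'$, also $(1-t_i)/(1-t'_i)=s'/s$. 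Applying Proposition~\ref{L5.2}\eqref{L5.2.2} with the roles of $t$ and $t'$ swapped and with $\gamma:=s'/s\ge 1$ gives $\psi(t')/(1-t'_i)\ge\psi(t)/(1-t_i)$. Multiplying through by the common numerator $s-\gamma^{(i-1)}_i>0$ produces $s'\psi(t')\ge s\psi(t)$, which is precisely $F(\gamma^{(i)})\ge F(\gamma^{(i-1)})$.

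For \eqref{C5.3.2}, if $\psi\in\pmb{\Psi}^{\rm sc}_n$ and $\alpha_{i_0}<\beta_{i_0}$ for some $i_0$, then at the $i_0$-th step above the ratio $s'/s$ is strictly greater than $1$, so Proposition~\ref{L5.2}\eqref{L5.2.3} supplies a strict inequality at that step, which is preserved by the remaining weak inequalities in the telescoping. The only delicate point is the direction of the monotonicity inherited from Proposition~\ref{L5.2}, which is why $t$ and $t'$ must be interchanged; once this is observed, the argument is essentially bookkeeping.
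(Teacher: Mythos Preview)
Your proof is correct and follows essentially the same approach as the paper: both telescope from $(\alpha_1,\ldots,\alpha_n)$ to $(\beta_1,\ldots,\beta_n)$ by replacing one coordinate at a time, invoking Proposition~\ref{L5.2} at each step (with the two points oriented so that $\gamma\ge 1$), and observe that the common factor $s-\gamma^{(i-1)}_i=s'-\gamma^{(i)}_i$ makes the inequality $\psi(t')/(1-t'_i)\ge\psi(t)/(1-t_i)$ equivalent to $s'\psi(t')\ge s\psi(t)$. Your explicit introduction of $F$ and the remark about swapping $t,t'$ make the bookkeeping slightly more transparent than the paper's version, but the argument is the same.
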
	

\begin{proof}
	Suppose that $\psi\in\pmb{\Psi}_n$.
	Let
	$\widehat\al_j:=\sum_{i=j}^{n}\al_i$ and $\widehat\be_j:=\sum_{i=1}^{j}\be_i$ for $j=1,\ldots,n$, and  $\xi_{j}:=\widehat\be_j+\widehat\al_{j+1}$ for $j=1,\ldots,n-1$. 
	Then $\widehat{\al}_1\le \xi_1\le \ldots\le\xi_{n-1}\le\widehat{\be}_n$.
	Inequality \eqref{L5.2-12} is equivalent to
	$\widehat\al_1\cdot\psi\left(\frac{\al_1}{\widehat\al_1},\ldots,\frac{\al_{n}}{\widehat\al_1}\right)\le \widehat\be_{n} \cdot\psi\left(\frac{\be_1}{\widehat\be_n},\ldots,\frac{\be_{n}}{\widehat\be_n}\right).
	$
	We repeatedly apply Proposition~\ref{L5.2}\;\eqref{L5.2.2} in the remainder of the proof.
	For
	$t':=\left(\frac{\al_1}{\widehat\al_1},\ldots,\frac{\al_{n}}{\widehat\al_1}\right)$ and $t:=\left(\frac{\be_1}{ \xi_1},\frac{\al_2}{ \xi_1},\ldots,\frac{\al_{n}}{ \xi_1}\right)$, we have
	\begin{gather}\label{L5.2-6}
		\widehat\al_1\cdot\psi\left(\frac{\al_1}{\widehat\al_1},\ldots,\frac{\al_{n}}{\widehat\al_1}\right)\le\xi_1 \cdot\psi\left(\frac{\be_1}{ \xi_1},\frac{\al_2}{ \xi_1},\ldots,\frac{\al_{n}}{ \xi_1}\right).
	\end{gather}	
	For $t':=\left(\frac{\be_1}{ \xi_1},\frac{\al_2}{ \xi_1},\ldots,\frac{\al_{n}}{ \xi_1}\right)$ and $t:=\left(\frac{\be_1}{\xi_2},\frac{\be_2}{\xi_2},\frac{\al_3}{\xi_2},\ldots,\frac{\al_n}{\xi_2}\right)$, we have
	\begin{gather*}
		\xi_1 \cdot\psi\left(\frac{\be_1}{ \xi_1},\frac{\al_2}{ \xi_1},\ldots,\frac{\al_{n}}{ \xi_1}\right)\le\xi_2 \cdot\psi\left(\dfrac{\be_1}{\xi_2},\dfrac{\be_2}{\xi_2},\dfrac{\al_3}{\xi_2},\ldots,\dfrac{\al_{n}}{\xi_2}\right).
	\end{gather*}	
	Repeating this process, we obtain
	\begin{gather*}
		\xi_{n-2} \cdot\psi\left(\dfrac{\be_1}{\xi_{n-2}},\ldots,\dfrac{\be_{n-2}}{ \xi_{n-2} },\dfrac{\al_{n-1}}{\xi_{n-2}},\dfrac{\al_n}{\xi_{n-2}}\right)\le\xi_{n-1} \cdot\psi\left(\dfrac{\be_1}{\xi_{n-1}},\ldots,\dfrac{\be_{n-1}}{ \xi_{n-1}},\dfrac{\al_n}{\xi_{n-1}}\right).
	\end{gather*}	
	Finally, for $t':=\left(\frac{\be_1}{\xi_{n-1}},\ldots,\frac{\be_{n-1}}{ \xi_{n-1}},\frac{\al_n}{\xi_{n-1}}\right)$ and $t:=\left(\frac{\be_1}{\widehat\be_n},\ldots,\frac{\be_{n}}{\widehat\be_n}\right)$, we have
	\begin{gather*}
		\xi_{n-1} \cdot\psi\left(\dfrac{\be_1}{\xi_{n-1}},\ldots,\dfrac{\be_{n-1}}{ \xi_{n-1}},\dfrac{\al_n}{\xi_{n-1}}\right)\le \widehat\be_{n} \cdot\psi\left(\dfrac{\be_1}{\widehat\be_n},\ldots,\dfrac{\be_{n}}{\widehat\be_n}\right).
	\end{gather*}	
	Thus, assertion \eqref{C5.3.1} is proved.
	If $\psi\in \pmb{\Psi}^{\rm{sc}}_n$, then without loss of generality, we can assume that $\al_1<\be_1$.
	Thus, $\ga_1>1$.
	By Proposition~\ref{L5.2}\;\eqref{L5.2.3}, inequality \eqref{L5.2-6} is strict, and consequently, inequality \eqref{L5.2-12} is strict.
\end{proof}	

The following theorems establish correspondences between ${\rm \textbf{N}}_{X^n}$
and $\pmb{\Psi}_n$, $\textbf{{\rm \textbf{N}}}^{\rm{sc}}_{X^n}$ and  $\pmb{\Psi}^{\rm{sc}}_n$.

\begin{theorem}\label{T2.7}
	Let $\vertiii{\cdot}$ be a norm on $X^n$, and $\mathbf{u}:=(\mathbf{u}_1,\ldots,\mathbf{u}_n)\in \mathbb{S}^n_X$.
	Define 
	\begin{gather}\label{T3.10.0}
		\psi_{\mathbf{u}}(t):=\vertiii{(t_1\mathbf{u}_1,\ldots,t_{n}\mathbf{u}_n)}
		\;\;\text{for all}\;\;t:=(t_1,\ldots,t_{n})\in\Omega_n.
	\end{gather}
	\begin{enumerate}[\rm (i)] 
		\item\label{T2.6-1}
		If $\vertiii{\cdot}\in\textbf{{\rm \textbf{N}}}_{X^n}$, then
		$\psi_{\mathbf{u}}\in\pmb{\Psi}_n$.
		\item\label{T2.6-2}
		If $\vertiii{\cdot}\in\textbf{{\rm \textbf{N}}}^{\rm{sc}}_{X^n}$, then $ \psi_{\mathbf{u}}\in\pmb{\Psi}^{\rm{sc}}_{n}$.
	\end{enumerate}
\end{theorem}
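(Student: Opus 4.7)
The plan is to verify each of the four defining properties of $\pmb{\Psi}_n$ for the function $\psi_{\mathbf{u}}$ in (i), and then upgrade convexity to strict convexity in (ii). Each verification reduces to a direct manipulation of the norm $\vertiii{\cdot}$, exploiting homogeneity, the triangle inequality, and Propositions~\ref{L3.2} and the compatibility condition \eqref{A2}. No heavy machinery is needed.

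I would begin by handling the normalization condition \eqref{S3.2-1}: for each $i$, $\psi_{\mathbf{u}}(\mathbf{e}_i)=\vertiii{(0_X,\ldots,0_X,\mathbf{u}_i,0_X,\ldots,0_X)}=\|\mathbf{u}_i\|=1$ by \eqref{A2} and the fact that $\mathbf{u}_i\in\mathbb{S}_X$. Convexity of $\psi_{\mathbf{u}}$ follows immediately from the observation that for $t,t'\in\Omega_n$ and $\lambda\in[0,1]$, the point $\bigl((\lambda t_i+(1-\lambda)t'_i)\mathbf{u}_i\bigr)_{i=1}^n$ equals $\lambda\bigl(t_i\mathbf{u}_i\bigr)_{i=1}^n+(1-\lambda)\bigl(t'_i\mathbf{u}_i\bigr)_{i=1}^n$, so the triangle inequality for $\vertiii{\cdot}$ gives the required subadditivity. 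Continuity is obtained from the Lipschitz estimate
\begin{gather*}
\abs{\psi_{\mathbf{u}}(t)-\psi_{\mathbf{u}}(t')}\le\vertiii{\bigl((t_1-t'_1)\mathbf{u}_1,\ldots,(t_n-t'_n)\mathbf{u}_n\bigr)}\le\sum_{i=1}^{n}\abs{t_i-t'_i}\cdot\|\mathbf{u}_i\|=\|t-t'\|_1,
\end{gather*}
where the second inequality is Proposition~\ref{L3.2}\,\eqref{L3.2-2} combined with \eqref{A2}.

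The crucial condition \eqref{S3.2-2} follows from Proposition~\ref{L3.2}\,\eqref{L3.2-1} together with positive homogeneity. Specifically, for $t\in\Omega_n^\circ$ and any $i$, the right-hand side of \eqref{S3.2-2} equals
\begin{gather*}
(1-t_i)\cdot\vertiii{\Bigl(\tfrac{t_1}{1-t_i}\mathbf{u}_1,\ldots,\tfrac{t_{i-1}}{1-t_i}\mathbf{u}_{i-1},0_X,\tfrac{t_{i+1}}{1-t_i}\mathbf{u}_{i+1},\ldots,\tfrac{t_n}{1-t_i}\mathbf{u}_n\Bigr)}
\end{gather*}
which, by homogeneity, equals $\vertiii{(t_1\mathbf{u}_1,\ldots,t_{i-1}\mathbf{u}_{i-1},0_X,t_{i+1}\mathbf{u}_{i+1},\ldots,t_n\mathbf{u}_n)}$, and this is bounded above by $\vertiii{(t_1\mathbf{u}_1,\ldots,t_n\mathbf{u}_n)}=\psi_{\mathbf{u}}(t)$ thanks to Proposition~\ref{L3.2}\,\eqref{L3.2-1}. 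This completes part (i).

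For part (ii), suppose $\vertiii{\cdot}\in\textbf{{\rm \textbf{N}}}^{\rm{sc}}_{X^n}$ and take distinct $t,t'\in\Omega_n$. The potentially subtle point, and thus the main place to be careful, is confirming that the vectors $x:=(t_1\mathbf{u}_1,\ldots,t_n\mathbf{u}_n)$ and $x':=(t'_1\mathbf{u}_1,\ldots,t'_n\mathbf{u}_n)$ are not positively proportional, so that strict convexity of $\vertiii{\cdot}$ applies. If $x'=\gamma x$ for some $\gamma>0$, then $t'_i\mathbf{u}_i=\gamma t_i\mathbf{u}_i$ for each $i$, and since $\|\mathbf{u}_i\|=1$ forces $\mathbf{u}_i\ne 0_X$, we get $t'_i=\gamma t_i$ for all $i$; summing and using $\sum t_i=\sum t'_i=1$ yields $\gamma=1$, contradicting $t\ne t'$. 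Consequently, for any $\lambda\in(0,1)$, strict convexity of $\vertiii{\cdot}$ gives
\begin{gather*}
\psi_{\mathbf{u}}(\lambda t+(1-\lambda)t')=\vertiii{\lambda x+(1-\lambda)x'}<\lambda\vertiii{x}+(1-\lambda)\vertiii{x'}=\lambda\psi_{\mathbf{u}}(t)+(1-\lambda)\psi_{\mathbf{u}}(t'),
\end{gather*}
which shows $\psi_{\mathbf{u}}\in\pmb{\Psi}^{\rm{sc}}_n$.
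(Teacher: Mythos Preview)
Your proof is correct and follows essentially the same approach as the paper: both arguments derive \eqref{S3.2-1} from \eqref{A2}, obtain convexity and continuity from the affine structure of $t\mapsto(t_i\mathbf{u}_i)$ combined with norm properties, and establish \eqref{S3.2-2} via Proposition~\ref{L3.2}\,\eqref{L3.2-1} plus homogeneity. For part~(ii) the paper argues by contradiction from a midpoint equality while you argue directly that $x$ and $x'$ cannot be positively proportional, but these are contrapositives of the same observation and rely on the identical normalization $\sum t_i=\sum t'_i=1$ to force $\gamma=1$.
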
	
\begin{proof}
	Assume that $\vertiii{\cdot}\in\textbf{{\rm \textbf{N}}}_{X^n}$, $\mathbf{u}:=(\mathbf{u}_1,\ldots,\mathbf{u}_n)\in \mathbb{S}^n_X$, and $\psi_{\mathbf{u}}$ is given by \eqref{T3.10.0}.
	The continuity and  convexity of $\psi_{\mathbf{u}}$ follow from the fact that it is the composition of the norm $\vertiii{\cdot}$ and the affine mapping
	$t\mapsto (t_1\mathbf{u}_1,\ldots,t_{n}\mathbf{u}_n)$.
	For each $i=1,\ldots,n$, by Proposition~\ref{L3.2}\;\eqref{L3.2-1},
	we have
	\begin{align*}
		\psi_{\mathbf{u}}(s)
		&\ge \vertiii{(s_1\mathbf{u}_1,\ldots,s_{i-1}\mathbf{u}_{i-1},0_X,s_{i+1}\mathbf{u}_{i+1},\ldots, s_{n}\mathbf{u}_n)}\\
		&=(1-s_i)\cdot\vertiii{\left(\frac{s_1}{1-s_i}\mathbf{u}_1,\ldots,\frac{s_{i-1}}{1-s_i}\mathbf{u}_{i-1},0_X,\frac{s_{i+1}}{1-s_i}\mathbf{u}_{i+1},\ldots, \frac{s_n}{1-s_i}\mathbf{u}_n\right)}\\
		&=(1-s_i)\cdot\psi_{\mathbf{u}}\left(\frac{s_1}{1-s_i},\ldots,\frac{s_{i-1}}{1-s_i},0,\frac{s_{i+1}}{1-s_i},\ldots,\frac{s_{n}}{1-s_i}\right)\; (i=1,\ldots,n)
	\end{align*}	
	for any $s:=(s_1,\ldots,s_{n})\in\Omega^\circ_n.$
	Besides,
	$\psi_{\mathbf{u}}(\mathbf{e}_i)=\vertiii{(0_X,\ldots,0_X,\mathbf{u}_i,0_X,\ldots,0_X)}=\|\mathbf{u}_i\|=1$
	$(i=1,\ldots,n)$.
	Thus, assertion  \eqref{T2.6-1} is proved.
	
	Let $\vertiii{\cdot}\in\textbf{{\rm \textbf{N}}}^{\rm{sc}}_{X^n}$.
	To show that $\psi_{\mathbf{u}}\in\pmb{\Psi}^{\rm{sc}}_n$,
	it is sufficient to prove that $\psi_{\mathbf{u}}$ is strictly convex.	
	Suppose that 
	$\psi_{\mathbf{u}}\left(\frac{s+s'}{2}\right)=\frac{\psi_{\mathbf{u}}(s)+\psi_{\mathbf{u}}(s')}{2}$
	for some distinct points $s:=(s_1,\ldots,s_{n}), s':=(s'_1,\ldots,s'_{n})\in \Omega_n$.
	Let $x:=\left(\frac{s_1}{2}\textbf{u}_1,\ldots,\frac{s_n}{2}\textbf{u}_n\right)$ and $x':=\left(\frac{s'_1}{2}\textbf{u}_1,\ldots,\frac{s'_n}{2}\textbf{u}_n\right)$.
	Then $\vertiii{x+x'}=\vertiii{x}+\vertiii{x'}$.
	By the strict convexity of $\vertiii{\cdot}$, there exists  a scalar $\la> 0$ such that $x'=\la x$.
	Thus, $s_i=\lambda s'_i$
	$(i=1,\ldots,n)$, and consequently,
	$1=s_1+\ldots+s_n=\la(s'_1+\ldots+s'_n)=\la$.
	This contradicts the assumption that $s$ and $s'$ are distinct.
	Thus, assertion \eqref{T2.6-2} is proved.
\end{proof}	

\begin{theorem}\label{T3.10}
	Let $\psi\in\pmb{\Psi}_n$.
	Define the function $\vertiii{\cdot}_\psi:X^n\to\R$ by
	\begin{equation}\label{T3.10-1}
		\vertiii{x}_\psi
		:= \begin{cases}
			\left(\sum_{i=1}^n\|x_i\|\right)\cdot\psi\left(\dfrac{\|x_1\|}{\sum_{i=1}^n\|x_i\|},\ldots,\dfrac{\|x_{n}\|}{\sum_{i=1}^n\|x_i\|}\right)  & \text{if } x\ne0_{X^n},\\
			0 & \text{otherwise}
		\end{cases} 
	\end{equation}
	for all $x:=(x_1,\ldots,x_n)\in X^n$.
	The following assertions hold:
	\begin{enumerate}[\rm (i)]
		\item \label{T3.10.2}
		$\vertiii{\cdot}_\psi \in\textbf{{\rm \textbf{N}}}_{X^n}$;
		\item\label{T3.10.1}
		if $\psi\in \pmb{\Psi}^{\rm{sc}}_{X^n}$ and  $\|\cdot\|$ is strictly convex, then $\vertiii{\cdot}_\psi\in \textbf{{\rm \textbf{N}}}^{\rm{sc}}_{X^n}$;
		\item\label{T3.10.3}
		$\psi(s)=\vertiii{(s_1\mathbf{u}_1,\ldots,s_{n}\mathbf{u}_n)}_{\psi}$
		for all $(\mathbf{u}_1,\ldots,\mathbf{u}_n)\in \mathbb{S}^n_X$ and $s:=(s_1,\ldots,s_{n})\in\Omega_n$.
	\end{enumerate}	
\end{theorem}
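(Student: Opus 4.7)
The plan is to dispatch (iii) first as a direct computation, then build up (i) piece by piece, saving the triangle inequality for a perspective-function argument, and finally leverage the strict versions of Lemma~\ref{C5.3} and Proposition~\ref{L5.2} for (ii).

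For (iii), I would simply plug $(s_1\mathbf{u}_1,\ldots,s_n\mathbf{u}_n)$ into \eqref{T3.10-1}. Since $\|s_i\mathbf{u}_i\|=s_i$ and $\sum_i s_i=1$, the prefactor is $1$ and the argument of $\psi$ is exactly $s$, giving $\psi(s)$.

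For (i), positive homogeneity is immediate from $\|\lambda x_i\|=|\lambda|\|x_i\|$ and the invariance of the ratios $\|x_i\|/\sum\|x_j\|$. Property \eqref{A1} follows similarly since $\|\pm x_i\|=\|x_i\|$. Property \eqref{A2} comes from evaluating \eqref{T3.10-1} at a vector with a single nonzero component: the argument of $\psi$ is $\mathbf{e}_i$ and Proposition~\ref{P5.1} (via \eqref{S3.2-1}) gives the value~$1$. Positive definiteness uses the lower bound $\psi(t)\ge \max_i t_i\ge \tfrac1n$ from Proposition~\ref{P5.1}, so $\vertiii{x}_\psi\ge \tfrac{1}{n}\sum_i\|x_i\|$, which is zero only when $x=0_{X^n}$. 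The hard step is the triangle inequality. Here I would introduce the positively $1$-homogeneous extension $\widetilde\psi:\R_+^n\to\R$ defined by $\widetilde\psi(t):=(\sum_i t_i)\psi(t/\sum_i t_i)$ for $t\ne 0$ and $\widetilde\psi(0):=0$, noting that Proposition~\ref{P5.1} already implies $\max_i t_i\le\widetilde\psi(t)\le\sum_i t_i$, which gives continuity at the origin. A short computation writing a convex combination $\lambda t+(1-\lambda)s$ in the form required to apply convexity of $\psi$ (absorb the weights $\lambda T,(1-\lambda)S$ with $T:=\sum t_i,S:=\sum s_i$ into a convex combination on the simplex) shows that $\widetilde\psi$ is convex on $\R_+^n$. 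Convexity together with positive $1$-homogeneity yields subadditivity $\widetilde\psi(u+v)\le\widetilde\psi(u)+\widetilde\psi(v)$. Combining this with the monotonicity given by Lemma~\ref{C5.3}\,\eqref{C5.3.1} applied to $c_i:=\|x_i+y_i\|\le \|x_i\|+\|y_i\|=:a_i+b_i$, one chains
\begin{equation*}
\vertiii{x+y}_\psi=\widetilde\psi(c_1,\ldots,c_n)\le\widetilde\psi(a_1+b_1,\ldots,a_n+b_n)\le\widetilde\psi(a)+\widetilde\psi(b)=\vertiii{x}_\psi+\vertiii{y}_\psi.
\end{equation*}

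For (ii), assume $x,x'\in\mathbb S_{X^n}$ (with respect to $\vertiii{\cdot}_\psi$) are distinct and suppose for contradiction that $\vertiii{x+x'}_\psi=2$. Then equality must hold in both inequalities in the chain above, with $a_i:=\|x_i\|$, $b_i:=\|x'_i\|$, $c_i:=\|x_i+x'_i\|$. From the first equality and the strict version Lemma~\ref{C5.3}\,\eqref{C5.3.2} (applicable by strict convexity of $\psi$), we get $c_i=a_i+b_i$ for every $i$, whence strict convexity of $\|\cdot\|$ forces, for each $i$, either $x_i=0$, or $x'_i=0$, or $x'_i=\mu_i x_i$ with $\mu_i>0$. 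From the second equality (subadditivity becoming equality at $\lambda=1/2$) together with strict convexity of $\psi$, the normalized profiles $(a_i/\sum_j a_j)$ and $(b_i/\sum_j b_j)$ must coincide; combined with $\vertiii{x}_\psi=\vertiii{x'}_\psi=1$ and (iii), this forces $\|x_i\|=\|x'_i\|$ for all $i$, so that the proportionality coefficients $\mu_i$ equal~$1$ wherever both components are nonzero, and the zero-pattern of $x$ and $x'$ agrees. Thus $x=x'$, contradicting distinctness.

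The only genuine obstacle is arranging the perspective/$1$-homogeneity argument so that both the convexity step and its equality case feed cleanly into the strict convexity argument; everything else is bookkeeping around the bounds already provided by Propositions~\ref{P5.1} and~\ref{L5.2} and by Lemma~\ref{C5.3}.
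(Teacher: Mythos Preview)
Your proposal is correct and follows essentially the same path as the paper: Lemma~\ref{C5.3}\,\eqref{C5.3.1} for the monotonicity step, convexity of $\psi$ for the subadditive step, and their strict counterparts together with strict convexity of $\|\cdot\|$ for part~(ii). The only cosmetic difference is that you name the perspective extension $\widetilde\psi$ explicitly and read off both equality conditions in one pass, whereas the paper performs the identical computation inline and, in (ii), splits into the two cases $t\ne t'$ and $t=t'$.
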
	

\begin{proof}
	\begin{enumerate}[\rm (i)]
		\item		 
		It is clear that $\vertiii{\cdot}_\psi$ satisfies condition \eqref{A1}.
		By  \eqref{T3.10-1}, we have
		$
		\vertiii{(0_X,\ldots,0_X,v,0_X,\ldots,0_X)}_\psi =\|{v}\|\cdot\psi(\mathbf{e}_i)=\|v\|
		$ $(i=1,\ldots,n)$
		for all $v\in X$.
		Thus, $\vertiii{\cdot}_\psi$ satisfies condition \eqref{A2}.
		To show that $\vertiii{\cdot}_\psi$ is a norm, it suffices to verify the triangle inequality as the other norm conditions hold trivially.
		Let $x:=(x_1,\ldots,x_n),x':=(x'_1,\ldots,x'_n)\in X^n$.
		We  prove that
		$
		\vertiii{ x+x'}_\psi \le \vertiii{ x}_\psi +\vertiii{x'}_\psi.
		$
		This condition is obviously satisfied if any of the following conditions holds:
		1) $x=0_{X^n}$; 2) $x'=0_{X^n}$; 3) $x+x'=0_{X^n}$.
		Otherwise, let 
		\begin{gather}\label{T3.10-3}
			\al:=\sum_{i=1}^{n}\|x_i\|,\; \be:=\sum_{i=1}^{n}\|x'_i\|,\; t:=\left(\frac{\|x_1\|}{\al},\ldots,\frac{\|x_{n}\|}{\al}\right),\; t':=\left(\frac{\|x'_1\|}{\be},\ldots,\frac{\|x'_{n}\|}{\be}\right).
		\end{gather}
		By Lemma~\ref{C5.3}\;\eqref{C5.3.1} and the convexity of $ \psi$, 
		\begin{align}\label{T5.7-2}
			\vertiii{ x+x'}_\psi
			&=\left(\sum_{i=1}^{n}\|x_i+x'_i\|\right)\cdot\psi\left(\dfrac{
				\|x_1+x'_1\|}{\sum_{i=1}^{n}\|x_i+x'_i\|},\ldots,\dfrac{
				\|x_{n}+x'_{n}\|}{\sum_{i=1}^{n}\|x_i+x'_i\|}\right)\\ \notag
			&\le(\al+\be)\psi\left(\dfrac{
				\|x_1\|+\|x'_1\|}{\al+\be},\ldots,\dfrac{
				\|x_{n}\|+\|x'_{n}\|}{\al+\be}\right)\\ \notag
			&=(\al+\be)\psi\left[\dfrac{\al}{\al+\be}\cdot t+\dfrac{\be}{\al+\be}\cdot t'\right]
			\le \al\psi(t)+\be\psi(t')=\vertiii{ x}_\psi +\vertiii{x'}_\psi.
		\end{align}	
		Thus, $\vertiii{\cdot}_\psi\in \textbf{{\rm \textbf{N}}}_{X^n}$.
		\item
		Let $\psi\in \pmb{\Psi}^{\rm{sc}}_n$ and $\|\cdot\|$ be strictly convex.
		Take distinct points $x:=(x_1,\ldots,x_n), x':=(x'_1,\ldots,x'_n)\in X^n$ satisfying $\vertiii{x}_\psi=\vertiii{x'}_\psi=1$ .
		We prove that
		$\vertiii{x+x'}_\psi<2.$
		Let $\al$, $\be$, $t$ and $t'$ be given by \eqref{T3.10-3}.
		Note that $t,t'\in\Omega_n$.
		We consider two cases.\\
		\textbf{Case 1}: $t\ne t'$.
		By Lemma~\ref{C5.3}\;\eqref{C5.3.1} and the 
		strict convexity of $\psi$, we obtain
		\begin{align*}
			\vertiii{x+x'}_\psi\le(\al+\be)\psi\left(\dfrac{\al}{\al+\be}\cdot t+\dfrac{\be}{\al+\be}\cdot t' \right)
			< \al\psi(t)+\be\psi(t')=2.
		\end{align*}	
		\textbf{Case 2}: $t=t'$.
		Then $\al=\be$, and consequently, $\|x_i\|=\|x'_i\|\;\;(i=1,\ldots,n).$
		Suppose that $\vertiii{x+x'}_\psi=2$.
		We have
		\begin{align*}
			2=\vertiii{x+x'}_\psi\le(\al+\be)\psi\left(\dfrac{
				\|x_1\|+\|x'_1\|}{\al+\be},\ldots,\dfrac{
				\|x_{n}\|+\|x'_{n}\|}{\al+\be}\right)\le2.
		\end{align*}	
		Thus,
		$
		\vertiii{x+x'}_\psi=(\al+\be)\psi\left(\frac{
			\|x_1\|+\|x'_1\|}{\al+\be},\ldots,\frac{	\|x_{n}\|+\|x'_{n}\|}{\al+\be}\right).
		$
		By Lemma~\ref{C5.3}\;\eqref{C5.3.2},  $\|x_i+x'_i\|=\|x_i\|+\|x'_i\|$ $(i=1,\ldots,n)$.
		The strict convexity of $\|\cdot\|$ ensures the existence of scalars $\la_i> 0$ such that $x'_i=\la_i x_i$ $(i=1,\ldots,n)$.
		Therefore, $\la_1=\ldots=\la_n=1$ meaning $x=x'$, which
		contradicts the assumption that $x$ and $x'$ are distinct.
		\item 
		Let $(\mathbf{u}_1,\ldots,\mathbf{u}_n)\in \mathbb{S}^n_X$ and $s:=(s_1,\ldots,s_{n})\in\Omega_n$.
		Substituting
		$(x_1,\ldots,x_n):=(s_1\mathbf{u}_1,\ldots,s_n\mathbf{u}_n)$ into \eqref{T3.10-1}, we obtain the desired equality.
	\end{enumerate}
	The proof is complete.
\end{proof}	

\begin{remark}\label{E3.9}
	\begin{enumerate}[\rm (i)]
		\item	
		The $p$-norm defined by \eqref{pnorm} is obtained by substituting the following function  into \eqref{T3.10-1}:
		\begin{equation}\label{pf}
			\psi_{p}(t)
			:= \begin{cases}
				\left(t_1^p+\ldots+t_n^p\right)^{\frac{1}{p}}  & \text{if } p\in[1,\infty),\\
				\max\{t_1,\ldots,t_n\} & \text{if } p=\infty
			\end{cases} 
		\end{equation}
		for all $t:=(t_1,\ldots,t_n)$.
		\item
		A conventional approach to constructing a norm on $X^n$ is to combine a norm $\vertiii{\cdot}_{\R^n}$ on $\mathbb{R}^n$
		with the given norm $\|\cdot\|$ on $X$.
		Consider the function $	\vertiii{x}:=\vertiii{(\|x_1\|,\ldots,\|x_n\|)}_{\R^n}$
		for all $x:=(x_1,\ldots,x_n)\in X^n$.
		In general, this function may not be a norm on  $X^n$ since it may fail to satisfy the triangle inequality; see \cite{CuoKru24-2} for a counterexample.
		However, if the norm $\vertiii{\cdot}_{\R^n}$ is monotone  in the sense that  $\vertiii{(\al_1,\ldots,\al_n)}_{\R^n}\le\vertiii{(\be_1,\ldots,\be_n)}_{\R^n}$ for $\abs{\al_i}\le\abs{\be_i}$ $(i=1,\ldots,n)$,
		then $\vertiii{\cdot}$ is indeed a norm on $X^n$;
		see, e.g., \cite[p.134]{LooSte90}.
	\end{enumerate}
\end{remark}	

The next theorem shows that if a norm
on $X^n$ is of the form \eqref{T3.10-1} for some function  $\psi$ on $\Omega_n$, then
$\psi$ must belong to $\pmb{\Psi}_n$ ($\pmb{\Psi}^{\rm{sc}}_n$) whenever the norm belongs to
$\textbf{{\rm \textbf{N}}}_{X^n}$ ($\textbf{{\rm \textbf{N}}}^{\rm{sc}}_{X^n}$).

\begin{theorem}\label{P3.10}
	Let $\psi:\Omega_n\to\R$, and the function $\vertiii{\cdot}_\psi$ be defined by \eqref{T3.10-1}.
	\begin{enumerate}[\rm (i)]
		\item\label{P3.20-1}
		If $\vertiii{\cdot}_\psi\in\textbf{{\rm \textbf{N}}}_{X^n}$, then $ \psi\in\pmb{\Psi}_n$.
		\item\label{P3.20-2}
		If $\vertiii{\cdot}_\psi\in\textbf{{\rm \textbf{N}}}^{\rm{sc}}_{X^n}$, then $ \psi\in\pmb{\Psi}^{\rm{sc}}_n$ and the norm $\|\cdot\|$ is strictly convex.
	\end{enumerate}
\end{theorem}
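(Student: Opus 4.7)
The plan is to reduce both assertions to Theorem~\ref{T2.7} applied to the norm $\vertiii{\cdot}_\psi$ itself. The pivotal observation is that the identity in Theorem~\ref{T3.10}\;\eqref{T3.10.3}, namely
\[
\psi(s)=\vertiii{(s_1\mathbf{u}_1,\ldots,s_n\mathbf{u}_n)}_\psi \quad \text{for all}\;\;\mathbf{u}:=(\mathbf{u}_1,\ldots,\mathbf{u}_n)\in\mathbb{S}^n_X,\;s\in\Omega_n,
\]
is established by a direct substitution into \eqref{T3.10-1} (using $\sum_{i=1}^n\|s_i\mathbf{u}_i\|=\sum_{i=1}^n s_i=1$). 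This is a purely algebraic identity, valid for any function $\psi:\Omega_n\to\R$, not only for those already known to lie in $\pmb{\Psi}_n$. I can therefore invoke it under the hypothesis of the theorem without circularity.

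For assertion~\eqref{P3.20-1}, assuming $\vertiii{\cdot}_\psi\in{\rm \textbf{N}}_{X^n}$, I fix any $\mathbf{u}\in\mathbb{S}^n_X$ and consider $\psi_{\mathbf{u}}$ as defined in \eqref{T3.10.0} with respect to the norm $\vertiii{\cdot}_\psi$. Theorem~\ref{T2.7}\;\eqref{T2.6-1} gives $\psi_{\mathbf{u}}\in\pmb{\Psi}_n$, and the identity above identifies $\psi_{\mathbf{u}}$ with $\psi$ on all of $\Omega_n$. Hence $\psi\in\pmb{\Psi}_n$.

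For assertion~\eqref{P3.20-2}, assuming $\vertiii{\cdot}_\psi\in{\rm \textbf{N}}^{\rm sc}_{X^n}$, the same argument combined with Theorem~\ref{T2.7}\;\eqref{T2.6-2} yields $\psi\in\pmb{\Psi}^{\rm sc}_n$. To verify strict convexity of $\|\cdot\|$, I pick arbitrary distinct $u,u'\in\mathbb{S}_X$ and set $x:=(u,0_X,\ldots,0_X)$ and $x':=(u',0_X,\ldots,0_X)$ in $X^n$. Condition~\eqref{A2}, which $\vertiii{\cdot}_\psi$ satisfies since $\vertiii{\cdot}_\psi\in{\rm \textbf{N}}_{X^n}$, yields
\[
\vertiii{x}_\psi=\vertiii{x'}_\psi=1 \quad\text{and}\quad \vertiii{x+x'}_\psi=\|u+u'\|,
\]
while $x\ne x'$ combined with the strict convexity of $\vertiii{\cdot}_\psi$ forces $\vertiii{x+x'}_\psi<2$. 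Therefore $\|u+u'\|<2$, as required.

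The only subtlety is the circularity concern flagged at the outset; once it is dispelled by noting that the identity \eqref{T3.10.3} is established by substitution alone, both parts are direct applications of Theorem~\ref{T2.7} and condition~\eqref{A2}, and no serious obstacle remains.
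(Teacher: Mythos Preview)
Your proof is correct and takes a genuinely different route from the paper. The paper proceeds by verifying each defining property of $\pmb{\Psi}_n$ for $\psi$ directly: it fixes $\mathbf{v}\in\mathbb{S}_X$ and checks \eqref{S3.2-1} and \eqref{S3.2-2} via Proposition~\ref{L3.2}\;\eqref{L3.2-1}, continuity via Proposition~\ref{L3.2}\;\eqref{L3.2-2}, and convexity via the triangle inequality of $\vertiii{\cdot}_\psi$; the strict convexity of $\psi$ is obtained by the same mechanism with distinct points. You instead observe that the identity in Theorem~\ref{T3.10}\;\eqref{T3.10.3} is established by pure substitution and therefore holds for \emph{any} $\psi:\Omega_n\to\R$, which lets you identify $\psi$ with $\psi_{\mathbf{u}}$ and invoke Theorem~\ref{T2.7} as a black box. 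Your approach is more economical and avoids duplicating arguments already present in the proof of Theorem~\ref{T2.7}; the paper's direct verification is slightly more self-contained in that it does not rely on noticing that \eqref{T3.10.3} holds outside its stated hypothesis. For the strict convexity of $\|\cdot\|$ in part~\eqref{P3.20-2}, your argument and the paper's are identical.
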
	

\begin{proof}
	Let $\vertiii{\cdot}_\psi\in\textbf{{\rm \textbf{N}}}_{X^n}$ and
	$\mathbf{v}\in\mathbb{S}_X$.\\
	\textbf{Claim 1}: $\psi$ satisfies condition \eqref{S3.2-1}.
	Let $\hat x_i:=(0_X,\ldots,0_X,\textbf{v},0_X,\ldots,0_X)$, where $\mathbf{v}$ is in the $i$th position $(i=1,\ldots,n)$.
	By \eqref{T3.10-1}, we have $\psi(\mathbf{e}_i)=\vertiii{\hat x_i}_\psi=\|\mathbf{v}\|=1$ $(i=1,\ldots,n)$.\\
	\textbf{Claim 2}: $\psi$ satisfies condition \eqref{S3.2-2}.
	Let $t:=(t_1,\ldots,t_{n})\in\Omega^\circ_n$ and $x:=(x_1,\ldots,x_n):=(t_1\textbf{v},\ldots,t_{n}\textbf{v})$.
	For each $i=1,\ldots,n$, by Proposition~\ref{L3.2}\;\eqref{L3.2-1}, we have
\begin{align*}
\psi(t)
&=\vertiii{x}_\psi\ge \vertiii{(t_1\textbf{v},\ldots,t_{i-1}\textbf{v},0_X,t_{i+1}\textbf{v},\ldots,t_n\textbf{v})}_\psi\\ &=(1-t_i)\vertiii{\left(\frac{t_1}{1-t_1}\textbf{v},\ldots,\frac{t_{i-1}}{1-t_i}\textbf{v},0_X,\frac{t_{i+1}}{1-t_i}\textbf{v},\ldots,\frac{t_{n}}{1-t_i}\textbf{v}\right)}_\psi\\
&=(1-t_i)\cdot\psi\left(\frac{t_1}{1-t_i},\ldots,\frac{t_{i-1}}{1-t_i},0,\frac{t_{i+1}}{1-t_i},\ldots,\frac{t_{n}}{1-t_i}\right).
	\end{align*}	
	\textbf{Claim 3}: $\psi$ is continuous.
	Since all norms on $\R^n$ are topologically equivalent, it suffices to show that $\psi$ is continuous with respect to the standard maximum norm on $\R^n$.
	Let $t:=(t_1,\ldots,t_{n})\in\Omega_n$
	and $t^k:=(t^k_1,\ldots,t^k_{n})\in\Omega_n$ for all $k\in\N$.
	Suppose that $t^k\to t$ as $k\to\infty$.
	Let  $x_k:=(t^k_1\textbf{v},\ldots,t^k_n\textbf{v})$ for all $k\in\N$, and $x:=(t_1\textbf{v},\ldots,t_n\textbf{v})$.
	By \eqref{T3.10-1},  we have $\vertiii{x_k}_\psi=\psi(t^k)$ for all $k\in\N$ and $\vertiii{x}_\psi=\psi(t)$.
	By Proposition~\ref{L3.2}\;\eqref{L3.2-2},
	\begin{align*}
		|\psi(t^k)-\psi(t)|
		=\abs{\vertiii{x_k}_\psi-\vertiii{x}_\psi}
		\le\vertiii{x_k-x}_\psi
		\le n\cdot \max_{1\le i\le n}|t^k_i-t_i|\to 0\;\;\text{as}\;\;k\to\infty.
	\end{align*}	
	\textbf{Claim 4}: $\psi$ is convex. 
	Let $t:=(t_1,\ldots,t_{n}), t':=(t'_1,\ldots,t'_{n})\in\Omega_n$, and $\la\in[0,1]$.
	Define
	\begin{gather*}
		x:=(x_1,\ldots,x_n):=\la\cdot(t_1\textbf{v},\ldots,t_n\textbf{v}),\;
		x':=(x'_1,\ldots,x'_n):=(1-\la)\cdot( t'_1\textbf{v},\ldots,t'_n\textbf{v}).
	\end{gather*}
	Then, $\sum_{i=1}^{n}\|x_i\|=\la$, $\sum_{i=1}^{n}\|x'_i\|=1-\la$, and $\sum_{i=1}^{n}\|x_i+x'_i\|=1$.
	By \eqref{T3.10-1},  $\vertiii{x}_\psi=\la\psi(t)$, $\vertiii{x'}_\psi=(1-\la)\psi(t')$ and $\vertiii{x+x'}_\psi=\psi(\la t+(1-\la)t')$.
	By the triangle inequality of $\vertiii{\cdot}_\psi$, we obtain
	$\psi(\la t+(1-\la)t')\le \la\psi(t)+(1-\la)\psi(t')$.
	Thus, assertion \eqref{P3.20-1} is proved.
	
	Assume that $\vertiii{\cdot}_{\psi}\in\textbf{{\rm \textbf{N}}}^{\rm{sc}}_{X^n}$.
	Using similar arguments as in the proof of \textbf{Claim 4}, we obtain that 
	$\psi$ is strictly convex.
	It remains to show that $\|\cdot\|$ is strictly convex.
	Let $\mathbf{v},\mathbf{v}'\in\mathbb{S}_X$ be distinct vectors, $x:=(\mathbf{v},0_X,\ldots,0_X)$ and $x':=(\mathbf{v}',0_X,\ldots,0_X)$.
	Thus, $x,x'\in\mathbb{S}_{X^n}$ and $x\ne x'$.
	By condition \eqref{A2} and the strict convexity of $\vertiii{\cdot}_\psi$,  we obtain
	$
	\|\mathbf{v}+\mathbf{v}'\|=\vertiii{(\mathbf{v}+\mathbf{v}',0_X,\ldots,0_X)}_\psi=\vertiii{x+x'}_\psi<2.
	$
	This completes the proof.
\end{proof}	

The next statement is a direct consequence of Theorem~\ref{T3.10}\;\eqref{T3.10.1} and Theorem~\ref{P3.10}\;\eqref{P3.20-2}.
\begin{corollary}\label{C2.9}
	Let $\psi:\Omega_n\to\R$, and the function $\vertiii{\cdot}_\psi$ be defined by \eqref{T3.10-1}.
	Then $\vertiii{\cdot}_\psi\in \textbf{{\rm \textbf{N}}}^{\rm{sc}}_{X^n}$ if and only if $\psi\in \pmb{\Psi}^{\rm{sc}}_{n}$ and  $\|\cdot\|$ is strictly convex.
\end{corollary}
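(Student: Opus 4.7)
The plan is to obtain the corollary as a biconditional assembled from two already-established one-way statements; no new machinery is required. The backbone is that Theorem~\ref{T3.10}\;\eqref{T3.10.1} supplies the \emph{if} direction verbatim, while Theorem~\ref{P3.10}\;\eqref{P3.20-2} supplies the \emph{only if} direction verbatim. Because both theorems concern exactly the formula \eqref{T3.10-1} defining $\vertiii{\cdot}_\psi$ and involve precisely the two strict-convexity hypotheses appearing in the corollary, I expect no intermediate manipulation.

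For the sufficiency, I would assume $\psi\in\pmb{\Psi}^{\rm{sc}}_n$ and that $\|\cdot\|$ is strictly convex, and then invoke Theorem~\ref{T3.10}\;\eqref{T3.10.1} to conclude that $\vertiii{\cdot}_\psi\in\textbf{{\rm \textbf{N}}}^{\rm{sc}}_{X^n}$. For the necessity, I would assume $\vertiii{\cdot}_\psi\in\textbf{{\rm \textbf{N}}}^{\rm{sc}}_{X^n}$ and apply Theorem~\ref{P3.10}\;\eqref{P3.20-2} to extract simultaneously both $\psi\in\pmb{\Psi}^{\rm{sc}}_n$ and the strict convexity of $\|\cdot\|$.

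There is no genuine obstacle: the corollary is a clean repackaging of two prior results. The only point worth checking is that the hypotheses on either side align correctly. In the backward direction, membership of $\vertiii{\cdot}_\psi$ in $\textbf{{\rm \textbf{N}}}^{\rm{sc}}_{X^n}$ already encodes conditions \eqref{A1} and \eqref{A2}, which is precisely what Theorem~\ref{P3.10}\;\eqref{P3.20-2} needs; in the forward direction, the hypotheses $\psi\in\pmb{\Psi}^{\rm{sc}}_n$ and strict convexity of $\|\cdot\|$ match exactly the assumptions of Theorem~\ref{T3.10}\;\eqref{T3.10.1}. Thus the proof reduces to a one-line invocation in each direction, and no further estimates or computations are needed.
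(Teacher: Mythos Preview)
Your proposal is correct and matches the paper's approach exactly: the paper states that the corollary is a direct consequence of Theorem~\ref{T3.10}\;\eqref{T3.10.1} and Theorem~\ref{P3.10}\;\eqref{P3.20-2}, which is precisely the two-direction invocation you describe.
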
	

\begin{remark}
	A result similar to Corollary~\ref{C2.9} in the setting of direct sums of Banach spaces can be found in \cite[Theorem~3.3]{KatSaiTam03}.
\end{remark}	

\section{Dual norms}\label{S4}

The following theorem provides an explicit expression for the dual norm of a given primal norm of the form \eqref{T3.10-1}.
\begin{theorem}\label{T4.2}
	Let $\psi\in\pmb{\Psi}_n$.
	Then
		\begin{gather}\label{T4.2-1}
			\vertiii{(x^*_1,\ldots,x^*_n)}_{\psi} =\max_{(t_1,\ldots,t_n)\in\Omega_n}\dfrac{\sum_{i=1}^nt_i\|x^*_i\|}{\psi (t_1,\ldots,t_{n})}.
		\end{gather}	
\end{theorem}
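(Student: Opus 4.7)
The plan is to prove both inequalities in \eqref{T4.2-1} directly from the definition of the dual norm,
$\vertiii{x^*}_\psi=\sup\{\langle x^*,x\rangle\mid\vertiii{x}_\psi\le 1\}$,
where $x^*:=(x^*_1,\ldots,x^*_n)$. Before starting, I would observe that the right-hand side of \eqref{T4.2-1} is indeed a maximum and not merely a supremum: the numerator is continuous on $\Omega_n$ and, by Proposition~\ref{P5.1}, $\psi(t)\ge 1/n>0$ throughout the compact simplex $\Omega_n$, so the ratio is continuous and attains its maximum.

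For the upper bound ``$\le$'', I would take any $x:=(x_1,\ldots,x_n)$ with $\vertiii{x}_\psi\le 1$. The case $x=0_{X^n}$ being trivial, set $s:=\sum_{i=1}^n\|x_i\|>0$ and $t:=(\|x_1\|/s,\ldots,\|x_n\|/s)\in\Omega_n$, so that $\vertiii{x}_\psi=s\psi(t)$ by \eqref{T3.10-1} and consequently $s\le 1/\psi(t)$. The pointwise estimate $\langle x^*_i,x_i\rangle\le\|x^*_i\|\|x_i\|=st_i\|x^*_i\|$, summed over $i$, then yields
\begin{gather*}
\langle x^*,x\rangle\le s\sum_{i=1}^n t_i\|x^*_i\|\le\frac{\sum_{i=1}^n t_i\|x^*_i\|}{\psi(t)}\le\max_{t'\in\Omega_n}\frac{\sum_{i=1}^n t'_i\|x^*_i\|}{\psi(t')},
\end{gather*}
and taking the supremum over admissible $x$ establishes the ``$\le$'' direction.

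For the reverse direction, I would fix $t:=(t_1,\ldots,t_n)\in\Omega_n$ and $\eps>0$. For each $i$, the dual-norm characterization on $X$ yields some $u_i\in\mathbb{S}_X$ with $\langle x^*_i,u_i\rangle\ge\|x^*_i\|-\eps$ (choosing $u_i$ arbitrarily when $x^*_i=0_{X^*}$). Set $x_i:=(t_i/\psi(t))u_i$. By Theorem~\ref{T3.10}\;\eqref{T3.10.3} applied to $(u_1,\ldots,u_n)\in\mathbb{S}^n_X$ we have $\vertiii{(t_1u_1,\ldots,t_nu_n)}_\psi=\psi(t)$, and positive homogeneity then gives $\vertiii{(x_1,\ldots,x_n)}_\psi=1$. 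Hence
\begin{gather*}
\vertiii{x^*}_\psi\ge\langle x^*,(x_1,\ldots,x_n)\rangle=\frac{1}{\psi(t)}\sum_{i=1}^n t_i\langle x^*_i,u_i\rangle\ge\frac{\sum_{i=1}^n t_i\|x^*_i\|}{\psi(t)}-\frac{\eps}{\psi(t)},
\end{gather*}
and letting $\eps\downarrow 0$ and then maximizing over $t\in\Omega_n$ produces the ``$\ge$'' inequality.

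I do not anticipate any substantive obstacle: the argument is essentially a homogeneity-plus-duality computation. The only points requiring care are ensuring that the maximum in \eqref{T4.2-1} is attained (handled by Proposition~\ref{P5.1}); treating degenerate cases $t_i=0$ or $x^*_i=0_{X^*}$, where both sides of the key inequalities reduce to $0$ and cause no harm; and noting that Theorem~\ref{T3.10}\;\eqref{T3.10.3} provides precisely the identity $\vertiii{(t_1u_1,\ldots,t_nu_n)}_\psi=\psi(t)$ needed to calibrate the scaling in the lower-bound construction.
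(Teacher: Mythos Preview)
Your proof is correct and follows essentially the same approach as the paper: both reduce the dual-norm computation to an optimization over the simplex $\Omega_n$ by exploiting that $\vertiii{\cdot}_\psi$ depends only on the norms $\|x_i\|$. The paper compresses your two inequalities into a single chain of equalities (in particular writing $\sup_{\vertiii{x}_\psi=1}\sum_i\langle x_i^*,x_i\rangle=\sup_{\vertiii{x}_\psi=1}\sum_i\|x_i^*\|\,\|x_i\|$ without spelling out the $\eps$-approximation you make explicit), so your version is simply a more detailed rendering of the same argument.
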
	
\begin{proof}
	Let $\vertiii{\cdot}_\psi$ be given by \eqref{T3.10-1}, and $x^*:=(x^*_1,\ldots,x^*_n)\in(X^n)^*$.
	Let $x:=(x_1,\ldots,x_n)\in X^n$ with $\vertiii{x}_\psi=1$,  and $t:=(t_1,\ldots,t_n):=\left(\frac{\|x_1\|}{\sum_{i=1}^{n}\|x_i\|},\ldots,\frac{\|x_n\|}{\sum_{i=1}^{n}\|x_i\|}\right).$
	Then $t\in\Omega_n$ and $\al\psi(t)=1$.
	We have
	\begin{align*}
		\vertiii{x^*}_{\psi}
		=\sup_{\vertiii{x}_\psi=1}
		\sum_{i=1}^{n}\langle x^*_i,x_i\rangle
		&=\sup_{\vertiii{x}_\psi=1}
		\sum_{i=1}^{n}\|x^*_i\|\cdot\|x_i\|
		=\max_{(t_1,\ldots,t_n)\in\Omega_n}\dfrac{\sum_{i=1}^nt_i\|x^*_i\|}{\psi (t_1,\ldots,t_{n})},
	\end{align*}	
	where the last equality stems from the fact that the function
	$(s_1,\ldots,s_n)\mapsto \frac{\sum_{i=1}^ns_i\|x^*_i\|}{\psi (s_1,\ldots,s_{n})}$
	is continuous (with respect to the topology induced by any chosen norm on $\R^n$) on the compact set $\Omega_n$.
	This completes the proof.
\end{proof}

\begin{remark}
	The dual space $(X^n)^*$ is isomorphic to the product space $(X^*)^n$, where $X$ is a given vector space.
	Indeed, one can verify that the mapping $L:(X^*)^n\to (X^n)^*$, defined by $\langle L(x^*),x\rangle:=\sum_{i=1}^{n}\langle x^*_i,x_i\rangle$
	for all $x^*:=(x^*_1,\ldots,x^*_n)\in (X^*)^n$ and $x:=(x_1,\ldots,x_n)\in X^n$, is linear and bijective; see  \cite[Proposition~3.4]{MitOshSai05} and \cite[Theorem~2]{SomAttSat05}.
	Therefore, for each $x^*\in (X^n)^*$, there exist  unique vectors     $x^*_1,\ldots,x^*_n\in X^*$ such that $x^*=(x^*_1,\ldots,x^*_n)$. 	
\end{remark}	

For each $\psi\in\pmb{\Psi}_n$, define the function $\psi^*:\Omega_n\to\R$ by 
	\begin{gather}\label{psi1}
		\psi^*(s):=\max_{t\in\Omega_n}\dfrac{\langle t,s\rangle}{\psi(t)}
		\;\;
		\text{for all}
		\;\; s\in\Omega_n.
	\end{gather}	
This function was originally introduced in \cite{MitOshSai05,SomAttSat05}.
Observe that the dual  norm of $\vertiii{\cdot}_\psi$ on $X^n$ is the norm $\vertiii{\cdot}_{\psi^*}$ on $(X^n)^*$.
We call $\psi^*$ the dual function of $\psi$. 
It follows directly from \eqref{psi1} that
$\langle t,s\rangle\le\psi(t)\psi^*(s)$ for all $t,s\in\Omega_n$.
The next proposition collects some basic properties of $\psi^*$.
\begin{proposition}\label{P3.2}
	Let $\psi\in \pmb{\Psi}_n$.
	The following assertions holds:
	\begin{enumerate}[\rm (i)]
		\item\label{P3.2-1}
		$\psi^*\in\pmb{\Psi}_n$; 	
		\item\label{P3.2-2}
		$\vertiii{\cdot}^*_\psi\in\textbf{{\rm \textbf{N}}}^{\rm{sc}}_{(X^n)^*}$ if and only if $ \psi^*\in\pmb{\Psi}^{\rm{sc}}_n$ and $\|\cdot\|_{X^*}$ is strictly convex;
		\item\label{P3.2-3}
		$\psi^*(s)=\vertiii{(s_1\mathbf{u}^*_1,\ldots,s_{n}\mathbf{u}^*_n)}_{\psi^*}$ for all $(\mathbf{u}^*_1,\ldots,\mathbf{u}^*_n)\in \mathbb{S}^n_{X^*}$ and $s:=(s_1,\ldots,s_{n})\in\Omega_n$;
		\item\label{P3.2-4}
		$\langle \bar t,\bar s\rangle=\psi(\bar t)\psi^*(\bar s)$ for some $\bar t,\bar s\in\Omega_n$  if and only if $\frac{ \bar s}{\vertiii{\bar s}}_{\psi^*}\in \partial\vertiii{\cdot}_{\psi} (\bar t)$;
		\item\label{P3.2-5}
		$\psi(t)=\max\limits_{s\in\Omega_n}\frac{\langle t,s\rangle}{\psi^*(s)}$
		for all $t\in\Omega_n$.
	\end{enumerate}	
\end{proposition}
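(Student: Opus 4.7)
The plan is to address the five assertions in order: part (i) supplies the key structural input, after which (ii) and (iii) fall out as direct applications of Corollary~\ref{C2.9} and Theorem~\ref{T3.10}\;\eqref{T3.10.3} to $\psi^*$, while (iv) and (v) are handled via the subdifferential formula \eqref{P4.12} after specializing to the case $X=\R$. For part (i), I would verify the four defining properties of $\pmb{\Psi}_n$ for $\psi^*$. Convexity is immediate since $\psi^*$ is a pointwise supremum of linear functionals $s\mapsto\langle t,s\rangle/\psi(t)$; continuity follows from Berge's maximum theorem, as Proposition~\ref{P5.1} ensures $\psi\ge 1/n>0$ on $\Omega_n$ and hence $(t,s)\mapsto\langle t,s\rangle/\psi(t)$ is jointly continuous on the compact set $\Omega_n\times\Omega_n$. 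For \eqref{S3.2-1}, the maximum $\psi^*(\mathbf{e}_i)=\max_{t\in\Omega_n}t_i/\psi(t)$ is bounded above by $1$ via Proposition~\ref{P5.1} and attained at $t=\mathbf{e}_i$. The main obstacle is \eqref{S3.2-2}: given $s\in\Omega_n^\circ$ and its projection $\bar s:=(s_1/(1-s_i),\ldots,0,\ldots,s_n/(1-s_i))$, I would pick a maximizer $t^*\in\Omega_n$ of the quotient defining $\psi^*(\bar s)$, dispose of the trivial subcase $t^*_i=1$ (in which $\psi^*(\bar s)=0$), and otherwise form the rescaled companion $t':=(t^*_1/(1-t^*_i),\ldots,0,\ldots,t^*_n/(1-t^*_i))\in\Omega_n$; applying \eqref{S3.2-2} to $\psi$ yields $\psi(t^*)\ge(1-t^*_i)\psi(t')$, and combining this with the identity $(1-s_i)\langle t^*,\bar s\rangle=\sum_{j\ne i}t^*_j s_j$ gives $\psi^*(s)\ge\langle t',s\rangle/\psi(t')\ge(1-s_i)\psi^*(\bar s)$.

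With $\psi^*\in\pmb{\Psi}_n$ in hand, parts (ii) and (iii) are essentially corollaries. Theorem~\ref{T4.2} together with the defining formula \eqref{psi1} identifies $\vertiii{\cdot}^*_\psi$ on $(X^n)^*$ with $\vertiii{\cdot}_{\psi^*}$ on $(X^*)^n$, so (ii) is Corollary~\ref{C2.9} applied to $\psi^*$ with $X^*$ in place of $X$, and (iii) is Theorem~\ref{T3.10}\;\eqref{T3.10.3} applied to $\psi^*$ and $X^*$.

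For part (iv), I specialize Theorem~\ref{T3.10} to $X=\R$ so that $\vertiii{\cdot}_\psi$ becomes a norm on $\R^n$ and both $\bar t,\bar s\in\Omega_n$ lie in its domain. Formula \eqref{T3.10-1} then gives $\vertiii{\bar t}_\psi=\psi(\bar t)$ and $\vertiii{\bar s}_{\psi^*}=\psi^*(\bar s)$, so the condition $\bar s/\vertiii{\bar s}_{\psi^*}\in\partial\vertiii{\cdot}_\psi(\bar t)$ translates, via \eqref{P4.12}, into $\vertiii{\bar s/\psi^*(\bar s)}_{\psi^*}=1$ (which is automatic) together with $\langle\bar s/\psi^*(\bar s),\bar t\rangle=\psi(\bar t)$, i.e., $\langle\bar t,\bar s\rangle=\psi(\bar t)\psi^*(\bar s)$.

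Finally, part (v) amounts to the biduality $(\psi^*)^*=\psi$ on $\Omega_n$. The inequality $(\psi^*)^*(t)\le\psi(t)$ is immediate from the defining inequality $\langle t,s\rangle\le\psi(t)\psi^*(s)$. For the reverse, I again work in $\R^n$: pick $x^*\in\partial\vertiii{\cdot}_\psi(t)$ from \eqref{P4.12}, so $\sum_i t_i x^*_i=\psi(t)$ and $\vertiii{x^*}_{\psi^*}=1$. Replacing each $x^*_i$ by $|x^*_i|$ preserves $\vertiii{x^*}_{\psi^*}$ by sign-symmetry and can only increase $\sum_i t_i x^*_i$ since $t_i\ge 0$; normalizing by $\sigma:=\sum_i|x^*_i|$ produces $s\in\Omega_n$ satisfying $\psi^*(s)=1/\sigma$ and $\langle t,s\rangle\ge\psi(t)/\sigma$, whence $\langle t,s\rangle/\psi^*(s)\ge\psi(t)$, as required.
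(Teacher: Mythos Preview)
Your proof is correct, but your route for parts (i) and (v) differs from the paper's. For (i), the paper avoids direct verification: since the dual of $\vertiii{\cdot}_\psi$ is automatically a norm on $(X^n)^*$, Theorem~\ref{T4.2} shows it coincides with $\vertiii{\cdot}_{\psi^*}$ in the sense of formula~\eqref{T3.10-1}, and one checks readily that it satisfies \eqref{A1} and \eqref{A2}; Theorem~\ref{P3.10}\;\eqref{P3.20-1} then forces $\psi^*\in\pmb{\Psi}_n$. This bypasses the hands-on verification of \eqref{S3.2-2} that you carry out. Your direct argument is self-contained and makes the role of \eqref{S3.2-2} for $\psi$ transparent, but the paper's indirect approach is shorter given the machinery already in place. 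For (v), the paper establishes the duality $\vertiii{x}_\psi=\sup_{\vertiii{x^*}_{\psi^*}=1}\langle x^*,x\rangle$ on general $X^n$ via the Hahn--Banach theorem and then specializes; you instead work entirely in $\R^n$, exploiting the subdifferential~\eqref{P4.12} and sign-symmetry to manufacture an optimal $s\in\Omega_n$. Both arguments are essentially the same duality principle---in finite dimensions the Hahn--Banach step and the nonemptiness of $\partial\vertiii{\cdot}_\psi(t)$ are equivalent---but your version is more concrete and avoids invoking Hahn--Banach explicitly. Parts (ii)--(iv) match the paper's treatment.
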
	
\begin{proof}
	It follows from \eqref{T4.2-1} that
	$\vertiii{\cdot}_{\psi^*}\in\textbf{{\rm \textbf{N}}}_{(X^n)^*}$.
	By Theorem~\ref{P3.10}\;\eqref{P3.20-1} with  $\psi^*$ in place of $\psi$, we obtain $\psi^*\in\pmb{\Psi}_n$.
	Thus, assertion \eqref{P3.2-1} is proved.
	Parts \eqref{P3.2-2} and \eqref{P3.2-3} are dual counterparts of Corollary~\ref{C2.9} and Theorem~\ref{T3.10}\;\eqref{T3.10.3}, respectively.
	The proofs are similar to those of these results and therefore are skipped.
	
	Let $\xi:=\frac{\bar s}{\vertiii{\bar s}_{\psi^*}}$.
	Then $\vertiii{\xi}_{\psi^*}=1$.
	Note that $\psi(\bar t)=\vertiii{\bar t}_\psi$ and $\psi^*(\bar s)=\vertiii{\bar s}_{\psi^*}$.
	If $\langle \bar t,\bar s\rangle=\psi(\bar t)\psi^*(\bar s)$, then dividing both sides of the latter by $\psi^*(\bar s)$, we have
	$\left\langle \xi,\bar t\right\rangle=\vertiii{\bar t}_{\psi}$.
	Thus, $\xi\in\partial\vertiii{\cdot}_{\psi}(\bar t)$.
	Conversely, if $\xi\in\partial\vertiii{\cdot}_{\psi}(\bar t)$, then $\left\langle \xi,\bar t\right\rangle=\vertiii{\bar t}_{\psi}$, or equivalently,
	$\langle \bar s,\bar t\rangle=\vertiii{\bar s}_{\psi^*}\cdot\vertiii{\bar t}_{\psi}=\psi(\bar t)\psi^*(\bar s)$.
	Thus, assertion \eqref{P3.2-4} is proved.
	
	We now prove \eqref{P3.2-5}.
	We first show that 
	\begin{gather}\label{P4.11.6}
		\vertiii{x}_\psi=\sup_{\vertiii{x^*}_{\psi^*}=1}\langle x^*,x\rangle\;\text{for all}\;x\in X^n.
	\end{gather}	
	If $x=0_{X^n}$, then condition \eqref{P4.11.6} is trivially satisfied.
	Let $x\ne 0_{X^n}$ and
	$x^*\in(X^n)^*$ with $\vertiii{x^*}_{\psi^*}=1$.
	Then $\langle x^*,x\rangle\le \vertiii{x^*}_{\psi^*}\cdot  \vertiii{x}_\psi=\vertiii{x}_\psi$, and consequently,
	$\vertiii{x}_\psi\ge \sup_{\vertiii{x^*}_{\psi^*}=1}\langle x^*,x\rangle.$
	By the Hahn-Banach theorem, there exists an $\hat x^*\in (X^n)^*$ such that $\vertiii{\hat x^*}_{\psi^*}=1$ and $\langle\hat x^*,x\rangle=\vertiii{x}_{\psi}$.
	Thus, $\vertiii{x}_{\psi}\le \sup_{\vertiii{x^*}_{\psi^*}=1}\langle x^*,x\rangle.$
	Therefore, condition \eqref{P4.11.6} is satisfied.
	Let $x^*:=(x^*_1,\ldots,x^*_n)\in(X^n)^*$ and
	$s_i:=\frac{\|x^*_i\|}{\sum_{i=1}^{n}\|x^*_i\|}$ 
	$(i=1,\ldots,n)$.
	By \eqref{P4.11.6},
	\begin{align*}
		\vertiii{x}_{\psi}
		&=\sup_{\vertiii{x}_{\psi^*}=1}
		\sum_{i=1}^{n}\langle x^*_i,x_i\rangle
		=\sup_{\vertiii{x}_{\psi^*}=1}
		\sum_{i=1}^{n}\|x^*_i\|\cdot\|x_i\|
		=\max_{(s_1,\ldots,s_n)\in\Omega_n}\dfrac{\sum_{i=1}^ns_i\|x_i\|}{\psi^*(s_1,\ldots,s_{n})}.
	\end{align*}	
	This completes the proof.
\end{proof}	

%The following proposition is important.
\begin{example}\label{E4.8}
	Let	$\psi_p$ be given by \eqref{pf} for some
	$p\in[1,\infty]$, and  $q\ge 1$ satisfy $\frac{1}{p}+\frac{1}{q}=1$.
	Then
		\begin{equation*}
			\psi^*_{p}(s)
			= \begin{cases}
				\max\{s_1,\ldots,s_n\}	& \text{if } p=1,\\
				\left(s_1^q+\ldots+s_n^q\right)^{\frac{1}{q}}  & \text{if } p\in(1,\infty),\\
				1 & \text{if } p=\infty
			\end{cases} 
		\end{equation*}
	for all $s:=(s_1,\ldots,s_{n})\in\Omega_n$.
\end{example}	

Let $\psi,\phi\in \pmb{\Psi}_n$, and $\psi^*$ and $\phi^*$ be the corresponding dual functions.
Define 
\begin{gather}\label{mM}
	m:=\min_{t\in\Omega_n}\dfrac{\psi(t)}{\phi(t)},\;\; M:=\max_{t\in\Omega_n}\dfrac{\psi(t)}{\phi(t)},\;\;
	m^*:=\min_{s\in\Omega_n}\dfrac{\psi^*(s)}{\phi^*(s)},\;\; M^*:=\max_{s\in\Omega_n}\dfrac{\psi^*(s)}{\phi^*(s)}.
\end{gather}	
Hereafter, the notations $\psi\ge\phi$ and $\psi^*\ge\phi^*$  mean that $\psi(t)\ge\phi(t)$ and  $\psi^*(t)\ge\phi^*(t)$ for all $t\in\Omega_n$, respectively.
\begin{proposition}\label{P7.1}
	Let $\psi,\phi\in \pmb{\Psi}_n$.
	The following statements hold:
	\begin{enumerate}[\rm (i)]
		\item\label{P7.1-0}
		$\frac{1}{n}\le m\le \frac{\psi(t)}{\phi(t)} \le M\le n$ for all $t\in\Omega_n$;
		\item\label{P7.1-1}
		$\frac{1}{n}\le m^*\le \frac{\psi^*(t)}{\phi^*(t)}\le M^*\le n$ for all $t\in\Omega_n$;
		\item\label{P7.1-3}
		$m\cdot M^*= M\cdot m^*=1$;
		\item\label{P7.1-6}
		$m\cdot\vertiii{\cdot}_{\phi}\le\vertiii{\cdot}_{\psi}\le M\cdot\vertiii{\cdot}_{\phi}$;
		\item\label{P7.1-7}
		$\frac{1}{M}\cdot\vertiii{\cdot}_{\phi^*}\le\vertiii{\cdot}_{\psi^*}\le\frac{1}{m}\cdot\vertiii{\cdot}_{\phi^*}$;
		\item\label{P7.1-4}
		$\psi\ge\phi$ if and only if $\psi^*\le\phi^*$;
		\item \label{P7.1-5}
		$\psi\le\phi$ if and only if $\psi^*\ge\phi^*$.
	\end{enumerate}		
\end{proposition}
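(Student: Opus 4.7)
The overall plan is to do the small items first and derive the duality relations from Proposition~\ref{P3.2}\,\eqref{P3.2-5}, then read off the norm inequalities and iff-statements as corollaries.

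For \eqref{P7.1-0} and \eqref{P7.1-1}, I would invoke Proposition~\ref{P5.1}: every $\psi\in\pmb{\Psi}_n$ satisfies $\frac{1}{n}\le\psi(t)\le 1$, and the same holds for $\phi,\psi^*,\phi^*$ by Proposition~\ref{P3.2}\,\eqref{P3.2-1}. Thus $\frac{\psi(t)}{\phi(t)}$ and $\frac{\psi^*(s)}{\phi^*(s)}$ are trapped between $\frac{1}{n}$ and $n$; continuity of these ratios on the compact set $\Omega_n$ guarantees that $m,M,m^*,M^*$ are attained, so all inequalities in \eqref{P7.1-0} and \eqref{P7.1-1} hold.

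The key item is \eqref{P7.1-3}, the identity $mM^{\ast}=Mm^{\ast}=1$. I would argue as follows. Since $\psi\ge m\phi$ on $\Omega_n$ and $\langle t,s\rangle\ge 0$ for $t,s\in\Omega_n$ (both have nonnegative entries), dividing inside the max that defines $\psi^{\ast}$ gives $\psi^{\ast}(s)\le \tfrac{1}{m}\phi^{\ast}(s)$ for every $s$, hence $M^{\ast}\le \tfrac{1}{m}$. Symmetrically, $\psi\le M\phi$ yields $\psi^{\ast}\ge\tfrac{1}{M}\phi^{\ast}$, hence $m^{\ast}\ge \tfrac{1}{M}$. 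The reverse inequalities $M^{\ast}\ge\tfrac{1}{m}$ and $m^{\ast}\le \tfrac{1}{M}$ follow by repeating the same argument for the dual pair $\psi^{\ast},\phi^{\ast}$ and using Proposition~\ref{P3.2}\,\eqref{P3.2-5}, which recovers $\psi$ from $\psi^{\ast}$ (and $\phi$ from $\phi^{\ast}$) by exactly the same max-formula. Combining gives the two equalities. This is the step I expect to be the main obstacle, since it hinges on the biduality identity $\psi=(\psi^{\ast})^{\ast}$, but Proposition~\ref{P3.2}\,\eqref{P3.2-5} makes this immediate.

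For \eqref{P7.1-6}, I would substitute the sandwich $m\phi(t)\le\psi(t)\le M\phi(t)$ into the explicit formula \eqref{T3.10-1}: writing $\alpha=\sum_{i=1}^n\|x_i\|$ and $t=(\|x_1\|/\alpha,\ldots,\|x_n\|/\alpha)\in\Omega_n$ for $x\ne 0_{X^n}$, one has $\vertiii{x}_\psi=\alpha\psi(t)$ and $\vertiii{x}_\phi=\alpha\phi(t)$, so the norm estimates are immediate (the case $x=0_{X^n}$ is trivial). Item \eqref{P7.1-7} is obtained by applying \eqref{P7.1-6} to the pair $\psi^{\ast},\phi^{\ast}$ whose extremal ratios are $m^{\ast},M^{\ast}$, and then replacing $m^{\ast},M^{\ast}$ by $\tfrac{1}{M},\tfrac{1}{m}$ via \eqref{P7.1-3}.

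Finally, \eqref{P7.1-4} and \eqref{P7.1-5} follow from \eqref{P7.1-3} by a direct reading: $\psi\ge\phi$ on $\Omega_n$ is equivalent to $m\ge 1$, which by $mM^{\ast}=1$ is equivalent to $M^{\ast}\le 1$, i.e.\ $\psi^{\ast}\le\phi^{\ast}$ on $\Omega_n$; and $\psi\le\phi$ is equivalent to $M\le 1$, which via $Mm^{\ast}=1$ is equivalent to $m^{\ast}\ge 1$, i.e.\ $\psi^{\ast}\ge\phi^{\ast}$.
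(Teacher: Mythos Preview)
Your proposal is correct and follows essentially the same route as the paper: parts \eqref{P7.1-0}--\eqref{P7.1-1} via Proposition~\ref{P5.1}, part \eqref{P7.1-3} via the defining max-formula for $\psi^*$ together with the biduality identity of Proposition~\ref{P3.2}\,\eqref{P3.2-5}, and parts \eqref{P7.1-6}--\eqref{P7.1-7} by substitution into \eqref{T3.10-1}. The one genuine (but minor) difference is your treatment of \eqref{P7.1-4}--\eqref{P7.1-5}: the paper proves each direction directly from the max-formulas \eqref{psi1} and Proposition~\ref{P3.2}\,\eqref{P3.2-5}, whereas you read them off as equivalences $m\ge 1\Leftrightarrow M^*\le 1$ and $M\le 1\Leftrightarrow m^*\ge 1$ via the already-established identity \eqref{P7.1-3}; your route is a bit cleaner since it avoids repeating the max-argument.
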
	

\begin{proof}
	By Proposition~\ref{P3.2}\;\eqref{P3.2-1}, we have $\psi^*,\phi^*\in \pmb{\Psi}_n$.
	Parts \eqref{P7.1-0} and \eqref{P7.1-1}  are direct consequences of Proposition~\ref{P5.1}.
	The estimates
	\begin{gather*}
		\psi^*(s)=\max_{t\in\Omega_n}\frac{\langle t,s\rangle}{\psi(t)}\le\dfrac{1}{m}\cdot \max_{t\in\Omega_n}\frac{ \langle t,s\rangle}{\phi(t)}=\frac{1}{m}\cdot\phi^*(s)\;\text{for all}\;s\in\Omega_n,\\
		\psi(t)=\max_{s\in\Omega_n}\frac{\langle s,t\rangle}{\psi^*(s)}\ge\dfrac{1}{M^*}\cdot\max_{s\in\Omega_n}\dfrac{\langle s,t\rangle}{\phi^*(s)}=\dfrac{1}{M^*}\cdot\phi(t)\;\text{for all}\;
		t\in\Omega_n
	\end{gather*}	
	imply $M^*\le\frac{1}{m}$ and $M^*\ge\frac{1}{m}$, respectively.
	Thus, $m\cdot M^*=1$.
	The proof of the equality $M\cdot m^*=1$ is similar.
	Hence, assertion \eqref{P7.1-3} is proved.
	In view of \eqref{T3.10-1}, assertions \eqref{P7.1-6} and \eqref{P7.1-7} are consequences of \eqref{P7.1-0}-\eqref{P7.1-3}.
	If  $\psi\ge \phi$, then $\psi^*(s)=\max\limits_{t\in\Omega_n}\frac{\langle t,s\rangle}{\psi(t)}\le \max\limits_{t\in\Omega_n}\frac{ \langle t,s\rangle}{\phi(t)}=\phi^*(s)$ for all $s\in\Omega_n$.
	Conversely, if $\psi^*\ge\phi^*$, then $\psi(t)=\max\limits_{s\in\Omega_n}\frac{\langle s,t\rangle}{\psi^*(s)}\ge\max\limits_{s\in\Omega_n}\frac{\langle s,t\rangle}{\phi^*(s)}=\phi(t)$ for all $t\in\Omega_n$.
	Thus, assertion \eqref{P7.1-4} is proved. 
	The proof of \eqref{P7.1-5} is similar.
\end{proof}	

\begin{remark}
	Part \eqref{P7.1-0} of Proposition~\ref{P7.1} recaptures 	 \cite[Theorem~2.1]{Cao03}.
\end{remark}	

\section{Subdifferential characterizations}\label{S5}
In this section, we provide an explicit formula for the convex subdifferential of the norm of the form
\eqref{T3.10-1}.

The next statement is essential.
\begin{lemma}\label{L5.1}
	Let $\psi\in\pmb{\Psi}_n$, and $\nu:=(\nu_1,\ldots,\nu_n)\in\R^n$ with $\vertiii{\nu}_\psi=1$.
	Then
		\begin{align}\notag
			\partial\vertiii{\cdot}_{\psi}(\nu)=\big\{(
			&\tau_1\gamma_1
			,\ldots,\tau_n\ga_n)\in\R^n \mid \mu\in\partial\psi(t),\;\ga_i:=\psi(t)+\langle \mu,\mathbf{e}_i-t\rangle\ge 0,\\ \label{L4.12-1}
			&\tau_i:=\text{\rm sgn}(\nu_i)\;\text{\rm if}\;\nu_i\ne 0\;\text{\rm and}\;\tau_i\in\{-1,1\}\;\text{\rm if}\;\nu_i=0\; (i=1,\ldots,n)\},
		\end{align}
	where $t:=\left(\frac{|\nu_1|}{\sum_{i=1}^{n}|\nu_i|},\ldots,\frac{|\nu_n|}{\sum_{i=1}^{n}\abs{\nu_i}}\right)$.
\end{lemma}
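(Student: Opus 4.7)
The plan is to derive both inclusions in \eqref{L4.12-1} from the classical subdifferential formula for a norm, \eqref{P4.12}: an element $\xi\in\R^n$ lies in $\partial\vertiii{\cdot}_\psi(\nu)$ precisely when $\vertiii{\xi}_{\psi^*}=1$ and $\langle\xi,\nu\rangle=\vertiii{\nu}_\psi=1$. Both conditions translate into explicit identities via Theorem~\ref{T4.2}, which gives $\vertiii{\xi}_{\psi^*}=\max_{s\in\Omega_n}\frac{\sum_i s_i|\xi_i|}{\psi(s)}$. The central algebraic lever I will use is that, for any $\mu\in\R^n$ and $\gamma_i:=\psi(t)+\langle\mu,\mathbf{e}_i-t\rangle$, the identity $\sum_i s_i\gamma_i=\psi(t)+\langle\mu,s-t\rangle$ holds for every $s\in\Omega_n$ (because $\sum_i s_i\mathbf{e}_i=s$ and $\sum_i s_i=1$); specializing to $s=t$ gives $\sum_i t_i\gamma_i=\psi(t)$.

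For the inclusion ``$\supseteq$'', I would start from $\mu\in\partial\psi(t)$ and the $\gamma_i,\tau_i$ specified in the statement, set $\xi_i:=\tau_i\gamma_i$, and verify the two characterizing conditions directly. The sign choice gives $\tau_i\nu_i=|\nu_i|$ for each $i$, so $\langle\xi,\nu\rangle=\sum_i|\nu_i|\gamma_i=\al\cdot\sum_i t_i\gamma_i=\al\cdot\psi(t)=1$, where $\al:=\sum_i|\nu_i|$. For $\vertiii{\xi}_{\psi^*}$, combining the identity above with the subgradient inequality $\psi(s)\ge\psi(t)+\langle\mu,s-t\rangle$ yields $\sum_i s_i\gamma_i\le\psi(s)$ for every $s\in\Omega_n$, with equality at $s=t$; hence the maximum in Theorem~\ref{T4.2} equals $1$.

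For the inclusion ``$\subseteq$'', I would start from $\xi\in\partial\vertiii{\cdot}_\psi(\nu)$ and collapse the chain
\begin{gather*}
1=\langle\xi,\nu\rangle\le\sum_i|\xi_i||\nu_i|=\al\sum_i t_i|\xi_i|\le\al\cdot\vertiii{\xi}_{\psi^*}\cdot\psi(t)=1,
\end{gather*}
whose final inequality is Theorem~\ref{T4.2} applied at $s=t$. Equality throughout forces two consequences: first, $\text{sgn}(\xi_i)=\text{sgn}(\nu_i)$ whenever $\nu_i\ne 0$, producing the prescribed $\tau_i$'s with a free choice in $\{-1,1\}$ where $\nu_i=0$; second, $t$ attains the maximum in the dual-norm formula, i.e.\ $\sum_i s_i\gamma_i\le\psi(s)$ for all $s\in\Omega_n$ with equality at $s=t$, where $\gamma_i:=|\xi_i|\ge 0$. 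Using $\langle\gamma,t\rangle=\sum_i t_i\gamma_i=\psi(t)$, this rearranges to $\psi(s)\ge\psi(t)+\langle\gamma,s-t\rangle$ on $\Omega_n$, so $\gamma\in\partial\psi(t)$. Taking $\mu:=\gamma$, the identity $\gamma_i=\psi(t)+\langle\mu,\mathbf{e}_i-t\rangle$ holds automatically, and the representation in \eqref{L4.12-1} is recovered.

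The step I expect to require the most care is the bookkeeping of signs at indices where $\nu_i=0$: the pairing $\langle\xi,\nu\rangle$ imposes no restriction on $\xi_i$ there, which is precisely the origin of the freedom $\tau_i\in\{-1,1\}$ in the statement. A minor subtlety worth noting is that the subgradient $\mu\in\partial\psi(t)$ is determined only up to translation by multiples of $(1,\dots,1)$ (since $\Omega_n$ lies in the hyperplane $\sum_i s_i=1$), but the quantity $\gamma_i=\psi(t)+\langle\mu,\mathbf{e}_i-t\rangle$ is invariant under this ambiguity. Beyond these points, the argument is an application of a Young-type inequality together with the subgradient inequality, made computational by Theorem~\ref{T4.2}.
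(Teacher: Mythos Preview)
Your proposal is correct and follows essentially the same route as the paper: both directions hinge on the norm--subdifferential characterization \eqref{P4.12}, the dual--norm formula of Theorem~\ref{T4.2}, and the subgradient inequality for $\psi$ at $t$, with $\mu$ (equivalently $\gamma=(|\xi_1|,\ldots,|\xi_n|)$) serving as the subgradient. Your write-up is in fact slightly more explicit than the paper in the ``$\supseteq$'' direction, where you verify $\vertiii{\xi}_{\psi^*}\le 1$ via $\sum_i s_i\gamma_i=\psi(t)+\langle\mu,s-t\rangle\le\psi(s)$ rather than asserting it; your remark on the $(1,\dots,1)$-translation ambiguity of $\mu$ is also a useful clarification absent from the original.
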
	

\begin{proof}
	Let $t:=(t_1,\ldots
	,t_n):=\left(\frac{|\nu_1|}{\sum_{i=1}^{n}|\nu_i|},\ldots,\frac{|\nu_n|}{\sum_{i=1}^{n}\abs{\nu_i}}\right)$, and $A$ denote the set in the right-hand side of \eqref{L4.12-1}. 
	We first prove that $\partial\vertiii{\cdot}_\psi(\nu)\subset A$.
	Let $\xi:=(\xi_1,\ldots,\xi_n)\in\partial\vertiii{\cdot}_\psi(\nu)$.
	Then  $\vertiii{\xi}_{\psi^*} =1$, $\langle \xi,\nu\rangle=1$, and $\sum_{i=1}^{n}\abs{\nu_i}=\frac{1}{\psi(t)}$.
	Thus, $\abs{\nu_i}=\frac{t_i}{\psi(t)}$ $(i=1,\ldots,n)$.
	Set $\bar t:=(t_1\cdot\text{sgn}(\nu_1),\ldots,t_n\cdot\text{sgn}(\nu_n))$.
	We have $\nu=\frac{\bar t}{\psi(t)}$, and consequently, $\psi(t)=\langle\xi,\bar t\rangle$.
	Let  $\mu:=(\abs{\xi_1},\ldots,\abs{\xi_n})$ and
	$\xi':=\frac{\mu}{\sum_{i=1}^{n}\abs{\xi_i}}$.
	Then $\psi(t)=\langle\xi,\bar t\rangle
	\le\langle\mu,t\rangle\le \sum_{i=1}^{n}\abs{\xi_i}\cdot\psi^*(\xi')\cdot\psi(t)=
	\vertiii{\xi}_{\psi^*}\cdot\psi(t)=\psi(t).$
	This implies $\psi(t)=\langle \mu,t\rangle$ and $\xi_it_i\cdot\text{sgn}(\nu_i)=|\xi_i|t_i\;\;(i=1,\ldots,n).$
	Consequently, $\xi_i=\abs{\xi_i}\cdot\text{sgn}(\nu_i)$ whenever $\nu_i\ne0$ $(i=1,\ldots,n)$.
	Let $t':=(t'_1,\ldots,t'_n)\in\Omega_n$.
	We have
	\begin{align*}
		\psi(t')=\vertiii{t'}_{\psi}&=\vertiii{\xi}_{\psi*}\cdot\vertiii{t'}_\psi=\vertiii{\mu}_{\psi*}\cdot\vertiii{t'}_\psi\\
		&\ge\abs{\langle\mu,t'\rangle}=
		\langle\mu,t'\rangle
		=\langle\mu,t'\rangle+\psi(t)-\langle\mu,t\rangle
		=\psi(t)+\langle \mu,t'-t\rangle.
	\end{align*}	
	Therefore, $\mu\in\partial\psi(t)$.
	Let
	$
	\ga_i:=\psi(t)+\langle \mu,\mathbf{e}_i-t\rangle
	=\psi(t)-\langle\mu,t\rangle+\abs{\xi_i}=\abs{\xi_i}\ge 0\;\;(i=1,\ldots,n).
	$
	Then $\xi_i=\tau_i\ga_i$ with $\tau_i:=\text{\rm sgn}(\nu_i)$ if $\nu_i\ne 0$, and $\xi_i=\tau_i\ga_i$ with $\tau_i\in\{-1,1\}$ if $\nu_i=0$ $(i=1,\ldots,n)$.
	Thus, $\partial\vertiii{\cdot}_{\psi}(\nu)\subset A$.
	
	Conversely, let $\xi:=(\xi_1,\ldots,\xi_n)\in A$.
	Then there exists a vector $\mu:=(\mu_1,\ldots,\mu_n)\in\partial\psi(t)$ such that 
	$\ga_i:=\psi(t)+\langle \mu,\mathbf{e}_i-t\rangle\ge 0$, $\xi_i=\tau_i\ga_i$ with $\tau_i:=\text{sgn}(\nu_i)$ if $\nu_i\ne0$ and $\tau_i\in\{-1,1\}$ if $\nu_i=0$ $(i=1,\ldots,n)$. 
	We have $\abs{\xi_i}=\psi(t)+\langle \mu,\mathbf{e}_i-t\rangle$ $(i=1,\ldots,n)$.
	Recall that $\sum_{i=1}^{n}{t_i}=1$.
	We have
	$\sum_{i=1}^{n}\abs{\xi_i}t_i
	=\sum_{i=1}^{n}t_i\cdot\left(\psi(t)+\langle \mu,\mathbf{e}_i-t\rangle\right)
	=\psi(t)+\sum_{i=1}^{n}{t_i}\mu_i-\langle \mu,t\rangle=\psi(t).
	$
	Note that $\xi_i\nu_i\ge 0$ for all $i=1,\ldots,n$.
	Thus,
	\begin{align*}
		\langle \xi,\nu\rangle
		=\sum_{i=1}^{n}\abs{\xi_i}\abs{\nu_i}
		=\left(\sum_{i=1}^{n}\abs{\nu_i}\right)\cdot\sum_{i=1}^{n}\abs{\xi_i}\abs{t_i}
		=\left(\sum_{i=1}^{n}\abs{\nu_i}\right)\cdot\psi(t)=\vertiii{\nu}_{\psi}=1.
	\end{align*}	 
	This implies $\vertiii{\xi}_{\psi^*}=1$, and consequently, $\xi\in\partial\vertiii{\cdot}_{\psi}(\nu)$ meaning $A\subset\partial\vertiii{\cdot}_{\psi}(\nu)$.
	The proof is complete.
\end{proof}	

\begin{remark}
	Lemma~\ref{L5.1} is a symmetric counterpart of \cite[Corollary~4.5]{MitSaiSuz03} in the real setting, where the complex space $\mathbb{C}^n$ is restricted to its real subspace $\mathbb{R}^n$.
\end{remark}	

\begin{theorem}\label{T5.2}
	Let $\psi\in\pmb{\Psi}_n$, and $x:=(x_1,\ldots,x_n)\in X^n$ with $\vertiii{x}_{\psi}=1$.
	Then
		\begin{gather}\label{T4.12-5}
			\partial\vertiii{\cdot}_\psi(x) = \left\{(\xi_1 x^*_1,\ldots,\xi_n x^*_n)\mid x^*_i\in\partial\|\cdot\|(x_i)\;(i=1,\ldots,n),\;(\xi_1,\ldots,\xi_n)\in\partial\vertiii{\cdot}_{\psi}(\nu)\right\},
		\end{gather}	
	where $\nu:=(\|x_1\|,\ldots,\|x_n\|)$.
\end{theorem}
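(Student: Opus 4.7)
The plan is to establish both inclusions in \eqref{T4.12-5} using the primal--dual characterization \eqref{P4.12} of the subdifferential of a norm together with the dual norm formula of Theorem~\ref{T4.2}. Throughout, write $\nu := (\|x_1\|,\ldots,\|x_n\|) \in \R^n$ and note that $\vertiii{\nu}_\psi = \vertiii{x}_\psi = 1$ by \eqref{T3.10-1}, so the real-variable description of Lemma~\ref{L5.1} applies at $\nu$.

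For the inclusion $\supseteq$, I would fix $x_i^* \in \partial\|\cdot\|(x_i)$ and $\xi := (\xi_1,\ldots,\xi_n) \in \partial\vertiii{\cdot}_\psi(\nu)$, set $\xi^* := (\xi_1 x_1^*,\ldots,\xi_n x_n^*) \in (X^n)^*$, and verify the two requirements of \eqref{P4.12} at $x$. The equality $\langle x_i^*, x_i\rangle = \|x_i\|$ holds whether or not $x_i = 0$, so a direct computation yields $\langle \xi^*, x\rangle = \sum_i \xi_i \|x_i\| = \langle \xi,\nu\rangle = 1$. The dual-norm bound $\vertiii{\xi^*}_\psi \le 1$ then follows by testing against arbitrary $y := (y_1,\ldots,y_n) \in X^n$ with $\vertiii{y}_\psi = 1$:
\begin{gather*}
\langle \xi^*, y\rangle \le \sum_{i=1}^n |\xi_i|\cdot\|x_i^*\|\cdot\|y_i\| \le \sum_{i=1}^n |\xi_i|\cdot\|y_i\| = \langle \xi,\eta\rangle \le \vertiii{\xi}_{\psi^*}\cdot \vertiii{\eta}_\psi = 1,
\end{gather*}
where $\eta := (\text{sgn}(\xi_i)\|y_i\|)_{i=1}^n \in \R^n$ satisfies $\vertiii{\eta}_\psi = \vertiii{y}_\psi = 1$ by \eqref{T3.10-1}. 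Combined with $\langle \xi^*, x\rangle = 1$, this forces $\vertiii{\xi^*}_\psi = 1$ and thus $\xi^* \in \partial\vertiii{\cdot}_\psi(x)$.

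For the reverse inclusion $\subseteq$, I would begin with $\xi^* = (\xi_1^*,\ldots,\xi_n^*) \in \partial\vertiii{\cdot}_\psi(x)$, so that $\vertiii{\xi^*}_\psi = 1$ and $\langle \xi^*, x\rangle = 1$. Evaluating Theorem~\ref{T4.2} at the specific test point $t := \nu/\sum_{j=1}^n\|x_j\| \in \Omega_n$ produces the chain
\begin{gather*}
1 = \langle \xi^*, x\rangle \le \sum_{i=1}^n \|\xi_i^*\|\cdot\|x_i\| \le \vertiii{\xi^*}_\psi\cdot \vertiii{x}_\psi = 1,
\end{gather*}
which must collapse to equalities throughout. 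The middle equality forces $\langle \xi_i^*, x_i\rangle = \|\xi_i^*\|\cdot\|x_i\|$ for every $i$; combined with Theorem~\ref{T4.2}, the right-hand equality certifies that $\xi := (\|\xi_1^*\|,\ldots,\|\xi_n^*\|) \in \R^n$ satisfies $\vertiii{\xi}_{\psi^*} = 1$ and $\langle \xi,\nu\rangle = 1$, hence $\xi \in \partial\vertiii{\cdot}_\psi(\nu)$ by \eqref{P4.12}. I would then factor componentwise: if $\|\xi_i^*\| > 0$, set $x_i^* := \xi_i^*/\|\xi_i^*\|$, which lies in $\partial\|\cdot\|(x_i)$ via \eqref{P4.12} (using the recovered equality $\langle \xi_i^*, x_i\rangle = \|\xi_i^*\|\cdot\|x_i\|$ when $x_i \ne 0$, and automatically when $x_i = 0$); if $\|\xi_i^*\| = 0$, choose any $x_i^* \in \partial\|\cdot\|(x_i)$ (the product $\xi_i x_i^*$ vanishes). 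In every case $\xi_i^* = \xi_i x_i^*$, delivering the required representation.

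The step I expect to be most delicate is the reverse direction, specifically confirming that the chain collapses to equalities --- which amounts to checking that $t = \nu/\sum_j\|x_j\|$ really attains the maximum in Theorem~\ref{T4.2}'s variational formula --- and simultaneously ensuring that the factorization $\xi_i^* = \xi_i x_i^*$ remains well-defined and consistent across the edge cases $x_i = 0$ and $\|\xi_i^*\| = 0$.
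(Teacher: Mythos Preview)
Your proof is correct and follows essentially the same two-inclusion strategy as the paper: for $\subseteq$ you both collapse the duality chain $1=\langle\xi^*,x\rangle\le\langle\xi,\nu\rangle\le 1$, extract $\xi:=(\|\xi_i^*\|)_i\in\partial\vertiii{\cdot}_\psi(\nu)$, and factorize componentwise (handling the zero cases identically); for $\supseteq$ you both verify $\langle\xi^*,x\rangle=1$, with your explicit testing argument for $\vertiii{\xi^*}_{\psi^*}\le 1$ being a step the paper leaves implicit. One small slip: with the paper's convention $\text{sgn}(0)=0$, your auxiliary vector $\eta$ has $|\eta_i|\le\|y_i\|$ and hence only $\vertiii{\eta}_\psi\le\vertiii{y}_\psi=1$ rather than equality when some $\xi_i=0$, but the weaker inequality is all your chain needs.
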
	
\begin{proof}
	Let $\nu:=(\nu_1,\ldots,\nu_n):=(\|x_1\|,\ldots,\|x_n\|)$, and  
	$B$ denote the set in the right-hand side of \eqref{T4.12-5}.
	We first prove that $\partial\vertiii{\cdot}_\psi(x)\subset B$.
	Let $u^*:=(u^*_1,\ldots,u^*_n)\in\partial\vertiii{\cdot}_\psi(x)$.
	Then $\vertiii{u^*}_{\psi^*}=1$ and $\langle u^*,x\rangle=1$.
	Set $\xi:=(\xi_1,\ldots,\xi_n):=(\|u^*_1\|,\ldots,\|u^*_n\|)$,
	$\al:=\sum_{i=1}^{n}\nu_i$, $\be:=\sum_{i=1}^{n}\xi_i$, $t:=\frac{\nu}{\al}$, $s:=\frac{\xi}{\be}.$
	Then $\al\psi(t)=\vertiii{x}_{\psi}=\vertiii{\nu}_{\psi}=1$ and 
	$\be\psi^*(s)=\vertiii{u^*}_{\psi^*}=\vertiii{\xi}_{\psi^*}=1.$
	We have
	$
	1=\langle u^*,x\rangle
	\le\langle \xi,\nu\rangle
	=\al\be\langle s,t\rangle\le\al\psi(t) \cdot\be\psi^*(s)=1.
	$
	This implies $\langle \xi,\nu\rangle=1$ and $ \langle u^*_i,x_i\rangle=\xi_i\cdot\|x_i\|$ $(i=1,\ldots,n)$.
	Consequently,
	$\xi\in\partial\vertiii{\cdot}_{\psi}(\nu)$.
	For each $i=1,\ldots,n$, choose a dual vector $\hat x^*_i\in\partial \|\cdot\|(x_i)$, and define $x^*_i:=\frac{u^*_i}{\xi_i}$ if  $\xi_i\ne 0$ and $x^*_i:=\hat x^*_i$ if  $\xi_i= 0$.
	Then $x ^*_i\in\partial\|\cdot\|(x_i)$ $(i=1,\ldots,n)$ and $u^*=(\xi_1 x^*_1,\ldots,\xi_n x^*_n)$.
	Thus, $u^*\in B$ meaning $\partial\vertiii{\cdot}_\psi(x)\subset B$.
	
	Conversely, let $u^*:=(u^*_1,\ldots,u^*_n)\in B$.
	Then there exist vectors $x^*_i\in\partial\|\cdot\|(x_i)$ and $\xi:=(\xi_1,\ldots,\xi_n)\in\partial\vertiii{\cdot}_{\psi}(\nu)$ such that
	$u^*_i=\xi_i x^*_i$ $(i=1,\ldots,n)$.
	We have $\vertiii{\xi}_{\psi^*}=1$ and $\langle \xi,\nu\rangle=\vertiii{\nu}_\psi.$
	Consequently, 
	$\langle u^*,x\rangle=\sum_{i=1}^{n}\xi_i\langle x^*_i,x_i\rangle
	=\sum_{i=1}^{n}\xi_i\|x_i\|=\langle\xi,\nu\rangle=\vertiii{\nu}_\psi=\vertiii{x}_\psi.$
	Thus, $u^*\in\partial\vertiii{\cdot}_{\psi}(x)$ meaning $\partial\vertiii{\cdot}_\psi(x)\subset B$.
	This completes the proof.
\end{proof}	

\section{Extremal principle}\label{S6}
In this section, we assume that $\Omega_1,\ldots,\Omega_n$ are nonempty subsets of a vector space $X$, $\bx\in\bigcap_{i=1}^n\Omega_i$, and $\widehat{\Omega}:=\Omega_1\times\ldots\times\Omega_n$.

The following extremality property has been studied recently in \cite{CuoKru24-2}.
\begin{definition}\label{D5.1}
	Let $X$ be a vector space.
	The collection $\{\Omega_1,\ldots,\Omega_n\}$ is 
	extremal at $\bx$ with respect to $\vertiii{\cdot}_{X^n}$ if there is a $\rho>0$ such that,
	for any $\varepsilon>0$, there exists a point $(a_1,\ldots,a_n)\in\eps\B_{X^n}$ such that 
	$\bigcap_{i=1}^n(\Omega_i-a_i)\cap B_\rho(\bx)=\emptyset.$
\end{definition}

The above definition assumes that $X$ is a vector space and employs only the norm structure of $X^n$.
If $X$ is equipped with a norm $\|\cdot\|$ and
$\vertiii{(x_1,\ldots,x_n)}_{X^n}:=\max_{1\le i\le n}\|x_i\|$ for all
$(x_1,\ldots,x_n)\in X^n$,
then it reduces to the conventional extremality \cite{KruMor80}.
The extremal principle, a dual necessary condition for the extremality,
serves as a powerful tool for deriving optimality conditions in nonconvex and nonsmooth optimization problems, as well as for establishing subdifferential and coderivative calculus rules \cite{Mor06.1}.
In \cite{CuoKru24-2}, the authors have established several generalized separation results for collections of sets, from which the conventional extremal principle follows.
The study in \cite{CuoKru24-2}  employs several compatibility conditions between the norms on product spaces.
Let $\|\cdot\|$, $\vertiii{\cdot}_{X^{n-1}}$ and $\vertiii{\cdot}_{X^n}$ be norms on $X$, $X^{n-1}$ and $X^n$, respectively. 
We consider the following compatibility conditions:
{\renewcommand\theenumi{(C\arabic{enumi})}
	\renewcommand\labelenumi{\theenumi} 
	\begin{enumerate}
		\item\label{C1}
		$\max\{\vertiii{(x_1,\ldots,x_{n-1})}_{X^{n-1}}, \vertiii{(x_n,\ldots,x_{n})}_{X^{n-1}}\}\le \kappa\vertiii{x}_{X^n}$ for all $x:=(x_1,\ldots,x_n)\in X^n$ and some $\kappa>0$;
		\item\label{C2}
		$\max\{\vertiii{(x_1,\ldots,x_{n-1})}_{X^{n-1}}, \|x_{n}\|\}\le \kappa\vertiii{x}_{X^n}$ for all $x:=(x_1,\ldots,x_n)\in X^n$ and some $\kappa>0$;
		\item\label{C3}
		$\max\{\|x_1\|,\ldots,\|x_n\|\}\le\kappa\vertiii{x}_{X^n}$ for all $x:=(x_1,\ldots,x_n)\in X^n$ and some $\kappa>0$;
		\item\label{C4}
		$\vertiii{x}_{X^n}\le\kappa\max\{\|x_1\|,\ldots,\|x_n\|\}$ for all $u\in X$ and some $\kappa>0$;
		\item\label{C5}
		$\vertiii{(u,\ldots,u)}_{X^n}\le\kappa\|u\|$;
		\item\label{C6}
		$\sum_{i=1}^n\|x_i^*\|\le \kappa\vertiii{x^*}_{(X^n)^*}$ for all $x^*:=(x^*_1,\ldots,x^*_n)\in(X^n)^*$ and some $\kappa>0$;
		\item\label{C7}
		$\vertiii{x^*}_{(X^n)^*}\le\kappa\cdot \sum_{i=1}^n\|x_i^*\|$  for all $x^*:=(x^*_1,\ldots,x^*_n)\in(X^n)^*$ and some $\kappa>0$.
	\end{enumerate}
} 
Detailed discussions of relations between these conditions can be found in \cite{CuoKru24-2}.
The following theorem, established recently in \cite{CuoKru24-2}, provides a dual necessary condition for the extremality in Definition~\ref{D5.1} in terms of Fr\'echet normals in Asplund spaces.
\begin{theorem}\label{C6.4}
	Let $(X,\|\cdot\|)$ and $(X^n,\vertiii{\cdot})$ be Asplund, $\widehat\Omega$ be closed, conditions {\rm\ref{C5}} and {\rm\ref{C6}} be satisfied.
	If $\{\Omega_1,\ldots,\Omega_n\}$ is extremal at $\bx$ with respect to $\vertiii{\cdot}$, then, for any $\varepsilon>0$, there exist $ x:=(x_1,\ldots,x_n)\in\widehat\Omega\cap B_{\varepsilon}(\bx,\ldots,\bx)$ and $(x_1^*,\ldots,x_n^*)\in N^F_{\widehat\Omega}(x)$ such that 
	$\|\sum_{i=1}^nx_i^*\|<\eps$ and $
	\vertiii{(x_1^*,\ldots,x_n^*)}=1.
	$
\end{theorem}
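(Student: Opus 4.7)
The plan is to adapt the classical Mordukhovich-Kruger variational proof of the extremal principle, with the penalty function built from the product norm $\vertiii{\cdot}$ rather than the max norm, and with conditions \ref{C5}-\ref{C6} invoked at the normalization step.

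First, I would exploit the extremality to choose, for each $\eta_k\downarrow 0$, vectors $(a_1^k,\ldots,a_n^k)\in\eta_k\B_{X^n}$ with $\bigcap_{i=1}^n(\Omega_i-a_i^k)\cap B_\rho(\bx)=\emptyset$, and define
\begin{equation*}
f_k(x_1,\ldots,x_n):=\vertiii{(x_1-x_n-a_1^k+a_n^k,\;\ldots,\;x_{n-1}-x_n-a_{n-1}^k+a_n^k,\;0_X)}.
\end{equation*}
Any $x\in\widehat{\Omega}\cap\overline{B}_{\rho/2}(\bx,\ldots,\bx)$ with $f_k(x)=0$ would make $u:=x_n-a_n^k$ a common point of $\bigcap_i(\Omega_i-a_i^k)\cap B_\rho(\bx)$ for $\eta_k<\rho/2$, contradicting extremality. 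Hence $c_k:=\inf_{\widehat{\Omega}\cap\overline{B}_{\rho/2}(\bx,\ldots,\bx)} f_k>0$, while evaluation at $(\bx,\ldots,\bx)$ yields $c_k\to 0$. Ekeland's variational principle applied to $f_k+\iota_{\widehat{\Omega}}$ with slope $\sqrt{c_k}$ then produces an approximate minimizer $\hat x^k\in\widehat\Omega$ close to $(\bx,\ldots,\bx)$.

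Second, the Asplund hypothesis justifies the Fr\'echet fuzzy sum rule: for any $\delta>0$, there exist points within $\delta$ of $\hat x^k$ carrying a subgradient $u^{*,k}\in\partial^F f_k$, a perturbation $v^{*,k}\in(X^n)^*$ with $\vertiii{v^{*,k}}_{(X^n)^*}\le\sqrt{c_k}$, and Fr\'echet normals $(x_1^{*,k},\ldots,x_n^{*,k})\in N^F_{\widehat{\Omega}}$ whose sum together with $u^{*,k}+v^{*,k}$ is $\delta$-close to $0$ in $(X^n)^*$. Writing $f_k=\vertiii{\cdot}\circ A_k$ with $A_k$ the affine map $x\mapsto(x_1-x_n-a_1^k+a_n^k,\ldots,x_{n-1}-x_n-a_{n-1}^k+a_n^k,0_X)$, the chain rule and formula \eqref{P4.12} give $u^{*,k}=(\xi_1^k,\ldots,\xi_{n-1}^k,-\sum_{i=1}^{n-1}\xi_i^k)$ for some $\xi^k\in(X^n)^*$ with $\vertiii{\xi^k}_{(X^n)^*}=1$. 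Two structural properties drop out: the coordinate sum $\sum_{i=1}^n [u^{*,k}]_i$ vanishes identically, and $\vertiii{u^{*,k}}_{(X^n)^*}\ge 1$, because the unit vector $A_k(y^k)/\vertiii{A_k(y^k)}$, having zero last component, is fixed by the linear part of $A_k$ and therefore admits a dual-pairing of $1$ with $u^{*,k}$.

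Finally, summing the $n$ coordinates in the fuzzy inequality and invoking the dual form of \ref{C5}, namely $\|\sum_i z_i^*\|\le\kappa\vertiii{z^*}_{(X^n)^*}$ (the adjoint of the diagonal embedding $u\mapsto(u,\ldots,u)$), yields $\|\sum_{i=1}^n x_i^{*,k}\|\le\kappa(\delta+\sqrt{c_k})$. The bounds $\vertiii{u^{*,k}}_{(X^n)^*}\ge 1$ and $\vertiii{v^{*,k}}_{(X^n)^*}\le\sqrt{c_k}$ give $\vertiii{(x_1^{*,k},\ldots,x_n^{*,k})}_{(X^n)^*}\ge 1-\sqrt{c_k}-\delta>0$, so rescaling to unit product norm produces the desired $(x_1^*,\ldots,x_n^*)\in N^F_{\widehat\Omega}(x^k)$, with condition \ref{C6} ensuring that the individual components $\|x_i^{*}\|$ remain controlled throughout the rescaling. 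The main obstacle I foresee is the simultaneous management of the four small parameters $\eta_k$, $\sqrt{c_k}$, $\delta$ and the rescaling factor so that every estimate survives at a single nearby point $x^k\in B_\eps(\bx,\ldots,\bx)$, and in particular that the two target conclusions $\vertiii{(x_1^*,\ldots,x_n^*)}_{(X^n)^*}=1$ and $\|\sum_i x_i^*\|<\eps$ hold concurrently at the same witness.
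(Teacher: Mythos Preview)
The paper does not give its own proof of this theorem: it is quoted verbatim from the reference \cite{CuoKru24-2} (Cuong--Kruger, \emph{Generalized separation of collections of sets}), so there is no in-paper argument to compare against. Your outline is the classical Mordukhovich--Kruger scheme (Ekeland plus the Asplund fuzzy sum rule), which is almost certainly the route taken in \cite{CuoKru24-2} as well.

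That said, your specific penalty $f_k(x)=\vertiii{L'(x-a^k)}$ with $L'(x)=(x_1-x_n,\ldots,x_{n-1}-x_n,0_X)$ carries a hidden cost. Both the positivity step (concluding $u:=x_n-a_n^k\in B_\rho(\bar x)$ from $x\in\overline B_{\rho/2}(\bar x,\ldots,\bar x)$) and the smallness step ($f_k(\bar x,\ldots,\bar x)=\vertiii{L'(a^k)}\le\vertiii{a^k}+\vertiii{(a_n^k,\ldots,a_n^k)}\le\eta_k+\kappa\|a_n^k\|\to0$) implicitly require control of individual coordinates $\|x_n\|$, $\|a_n^k\|$ by $\vertiii{\cdot}$---essentially condition \ref{C3}, which is \emph{not} among the hypotheses; \ref{C5} and \ref{C6} alone do not bound the projections $X^n\to X$. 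A penalty based on the $\vertiii{\cdot}$-distance to the diagonal $D=\{(u,\ldots,u):u\in X\}$ gives $c_k\le\vertiii{a^k}$ directly and avoids part of this, but the contradiction step still needs a lower bound on $\vertiii{(v,\ldots,v)}$ in terms of $\|v\|$, so some care is required. Your use of the dual of \ref{C5} for the sum estimate $\|\sum_i x_i^{*,k}\|$ is correct; the role you assign to \ref{C6} (``individual components remain controlled'') is accurate but is really needed for the identification $(X^n)^*\cong(X^*)^n$ underlying the entire argument, not merely the final rescaling.
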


The above theorem employs conditions \ref{C5} and \ref{C6}.
For the application of the other  compatibility conditions in generalized separations results for collections of sets, the reader is referred to \cite{CuoKru24-2}.
In the following statements, we show that the class of sign-symmetric norms defined by \eqref{T3.10-1} satisfies all the compatibility conditions  \ref{C1}-\ref{C7}.
\begin{lemma}\label{P6.6}
	Let $\phi\in\pmb{\Psi}_n$.
	Define 
		\begin{gather}\label{varphi}
			\bar \phi(t):=\phi(t_1,\ldots,t_{n-1},0)\;\;\text{for all}\;\; t:=(t_1,\ldots,t_{n-1})\in\Omega_{n-1}.
		\end{gather}	
	Then, $\bar \phi\in\pmb{\Psi}_{n-1}$.
\end{lemma}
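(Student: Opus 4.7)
The plan is to verify directly that $\bar\phi$ satisfies all the defining properties of the class $\pmb{\Psi}_{n-1}$, using nothing more than the corresponding properties of $\phi$ pulled back through the coordinate embedding $\iota:\Omega_{n-1}\to\Omega_n$ given by $\iota(t_1,\ldots,t_{n-1}):=(t_1,\ldots,t_{n-1},0)$. Note that $\iota$ is well-defined (the coordinates remain nonnegative and sum to $1$) and affine, so continuity and convexity of $\bar\phi=\phi\circ\iota$ follow immediately from the continuity and convexity of $\phi$ on $\Omega_n$.

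For condition \eqref{S3.2-1} with $n-1$ coordinates, observe that $\iota(\mathbf{e}_i)=\mathbf{e}_i$ (the latter now viewed in $\R^n$) for each $i=1,\ldots,n-1$, so $\bar\phi(\mathbf{e}_i)=\phi(\mathbf{e}_i)=1$, using \eqref{S3.2-1} for $\phi$.

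The remaining step is condition \eqref{S3.2-2}. Given $t:=(t_1,\ldots,t_{n-1})\in\Omega^\circ_{n-1}$ and $i\in\{1,\ldots,n-1\}$, I would set $\tilde t:=\iota(t)=(t_1,\ldots,t_{n-1},0)\in\Omega_n$. Since $t_j<1$ for $j=1,\ldots,n-1$ and $\tilde t_n=0<1$, we have $\tilde t\in\Omega^\circ_n$, so \eqref{S3.2-2} applies to $\phi$ at $\tilde t$ with the same index $i$, yielding
\begin{gather*}
\phi(\tilde t)\ge (1-\tilde t_i)\cdot\phi\left(\dfrac{\tilde t_1}{1-\tilde t_i},\ldots,\dfrac{\tilde t_{i-1}}{1-\tilde t_i},0,\dfrac{\tilde t_{i+1}}{1-\tilde t_i},\ldots,\dfrac{\tilde t_{n}}{1-\tilde t_i}\right).
\end{gather*}
Since $\tilde t_i=t_i$ and $\tilde t_n=0$ (so that $\tilde t_n/(1-\tilde t_i)=0$), the right-hand side equals $(1-t_i)\cdot\phi\bigl(s,0\bigr)$ where $s:=(t_1/(1-t_i),\ldots,t_{i-1}/(1-t_i),0,t_{i+1}/(1-t_i),\ldots,t_{n-1}/(1-t_i))$, and by definition \eqref{varphi} this equals $(1-t_i)\bar\phi(s)$. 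Meanwhile $\phi(\tilde t)=\bar\phi(t)$, so the displayed inequality is exactly \eqref{S3.2-2} for $\bar\phi$ at $t$.

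There is no serious obstacle here. The only thing to watch is that the zero in the $n$th slot after the renormalization $\tilde t/(1-\tilde t_i)$ remains a zero (which it does, since $\tilde t_n=0$), so the projection onto the first $n-1$ coordinates commutes with the renormalization step built into \eqref{S3.2-2}. Combining the three verifications concludes that $\bar\phi\in\pmb{\Psi}_{n-1}$.
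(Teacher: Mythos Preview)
Your proof is correct and follows essentially the same approach as the paper: verify \eqref{S3.2-1} and \eqref{S3.2-2} for $\bar\phi$ directly by pulling back the corresponding properties of $\phi$ through the embedding $(t_1,\ldots,t_{n-1})\mapsto(t_1,\ldots,t_{n-1},0)$. Your version is in fact slightly more complete, since you explicitly check continuity and convexity of $\bar\phi$ via the affine map $\iota$, whereas the paper leaves this implicit.
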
	
\begin{proof}
	Since $\phi$ satisfies \eqref{S3.2-1}, we have
	$\bar\phi\left(1,0,\ldots,0\right)=\phi(\mathbf{e}_1)=1,\ldots,
	\bar\phi\left(0,0,\ldots,1\right)=\phi(\mathbf{e}_{n-1})=1.$
	For each $i=1,\ldots,n-1$, by \eqref{S3.2-2}, we have
	\begin{align*}
		\bar\phi(t)=\phi(t_1,\ldots,t_{n-1},0)
		&\ge (1-t_i)\cdot\phi\left(\dfrac{t_1}{1-t_i},\ldots,\dfrac{t_{i-1}}{1-t_i},0,\dfrac{t_{i+1}}{1-t_i},\ldots,\dfrac{t_{n-1}}{1-t_i},0\right)\\
		&=(1-t_i)\cdot\bar\phi\left(\dfrac{t_1}{1-t_i},\ldots,\dfrac{t_{i-1}}{1-t_i},0,\dfrac{t_{i+1}}{1-t_i},\ldots,\dfrac{t_{n-1}}{1-t_i}\right)
	\end{align*}
	for all $t:=(t_1,\ldots,t_{n-1})\in\Omega^\circ_{n-1}$.
	Thus, $\bar \phi\in\pmb{\Psi}_{n-1}$.
\end{proof}	

\begin{proposition}\label{P6.5}
	Let $\phi\in\pmb{\Psi}_n$, $\bar\phi$ be defined by \eqref{varphi}, $u\in X$, $x:=(x_1,\ldots,x_n)\in X^n$ and $x^*:=(x^*_1,\ldots,x^*_n)\in(X^n)^*$.
	The following inequalities hold:
	\begin{enumerate}[\rm (i)]
		\item\label{P6.5-1}
		$\max\{\vertiii{(x_1,\ldots,x_{n-1})}_{\bar\phi}, \vertiii{(x_n,\ldots,x_{n})}_{\bar\phi}\}\le (n-1)\cdot\vertiii{x}_\phi$;
		\item\label{P6.5-2}
		$\max\{\vertiii{(x_1,\ldots,x_{n-1})}_{\bar\phi}, \|x_{n}\|\}\le \vertiii{x}_\phi$;
		\item\label{P6.5-3}
		$\max\{\|x_1\|,\ldots,\|x_n\|\}\le\vertiii{x}_{\phi}$;
		\item\label{P6.5-4}
		$\vertiii{x}_{\phi}\le n\cdot\max\{\|x_1\|,\ldots,\|x_n\|\}$;
		\item\label{P6.5-5}
		$\vertiii{(u,\ldots,u)}_{\phi}\le n\cdot\|u\|$;
		\item\label{P6.5-6}
		$\sum_{i=1}^n\|x_i^*\|\le n\cdot\vertiii{x^*}_{\phi^*}$;
		\item\label{P6.5-7}
		$\vertiii{x^*}_{\phi^*}\le \sum_{i=1}^n\|x_i^*\|$.
	\end{enumerate}
\end{proposition}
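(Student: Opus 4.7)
The plan is to reduce all seven inequalities to two kinds of facts already available: the sandwich estimate of Proposition~\ref{L3.2}\;\eqref{L3.2-2}, which compares any norm in ${\rm \textbf{N}}_{X^n}$ with the $\infty$- and $1$-norms, and the monotonicity estimates of Proposition~\ref{L3.2}\;\eqref{L3.2-1}--\eqref{L3.2-0}, which say that replacing a coordinate by $0_X$ cannot increase the norm. Since $\phi\in\pmb{\Psi}_n$ gives $\vertiii{\cdot}_\phi\in{\rm \textbf{N}}_{X^n}$ by Theorem~\ref{T3.10}\;\eqref{T3.10.2} and $\bar\phi\in\pmb{\Psi}_{n-1}$ by Lemma~\ref{P6.6}, these tools are available both in dimension $n$ and in dimension $n-1$. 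On the dual side they remain available, because $\phi^*\in\pmb{\Psi}_n$ by Proposition~\ref{P3.2}\;\eqref{P3.2-1}, so $\vertiii{\cdot}_{\phi^*}\in{\rm \textbf{N}}_{(X^n)^*}$ and Proposition~\ref{L3.2} applies to it verbatim.

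First I would dispose of the four elementary items. Parts \eqref{P6.5-3} and \eqref{P6.5-4} are exactly the two halves of the sandwich $\vertiii{x}_\infty\le\vertiii{x}_\phi\le\vertiii{x}_1\le n\cdot\vertiii{x}_\infty$ from Proposition~\ref{L3.2}\;\eqref{L3.2-2}. Part \eqref{P6.5-5} is the instance of \eqref{P6.5-4} at the diagonal vector $(u,\ldots,u)$. Applying the same sandwich to $\vertiii{\cdot}_{\phi^*}$ yields $\vertiii{x^*}_{\phi^*}\le\vertiii{x^*}_1=\sum_{i=1}^n\|x^*_i\|$, which is \eqref{P6.5-7}; and chaining $\sum_{i=1}^n\|x^*_i\|\le n\max_i\|x^*_i\|=n\vertiii{x^*}_\infty\le n\vertiii{x^*}_{\phi^*}$ gives \eqref{P6.5-6}.

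Parts \eqref{P6.5-2} and \eqref{P6.5-1} rest on the bookkeeping observation
\begin{equation*}
\vertiii{(x_1,\ldots,x_{n-1})}_{\bar\phi}=\vertiii{(x_1,\ldots,x_{n-1},0_X)}_\phi,
\end{equation*}
which is a direct unwinding of \eqref{T3.10-1} combined with the definition \eqref{varphi} of $\bar\phi$ (the factor $\sum_{i=1}^{n-1}\|x_i\|$ is the same in both expressions, and the last argument of $\phi$ vanishes). Proposition~\ref{L3.2}\;\eqref{L3.2-0} then gives $\vertiii{(x_1,\ldots,x_{n-1},0_X)}_\phi\le\vertiii{x}_\phi$, while $\|x_n\|=\vertiii{(0_X,\ldots,0_X,x_n)}_\phi\le\vertiii{x}_\phi$ follows from \eqref{A2} together with Proposition~\ref{L3.2}\;\eqref{L3.2-1}; combined, these yield \eqref{P6.5-2}. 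For \eqref{P6.5-1}, the first term is bounded by $\vertiii{x}_\phi\le(n-1)\vertiii{x}_\phi$ via \eqref{P6.5-2} (using $n\ge 2$), while the second is handled by applying \eqref{P6.5-4} in dimension $n-1$ to the constant vector $(x_n,\ldots,x_n)\in X^{n-1}$: $\vertiii{(x_n,\ldots,x_n)}_{\bar\phi}\le(n-1)\|x_n\|\le(n-1)\vertiii{x}_\phi$, where the last step uses \eqref{P6.5-3}.

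There is no real obstacle here; every item reduces to Proposition~\ref{L3.2} applied to an appropriate norm in ${\rm \textbf{N}}_{X^n}$, ${\rm \textbf{N}}_{X^{n-1}}$, or ${\rm \textbf{N}}_{(X^n)^*}$. The only point worth double-checking is the identification of $\vertiii{\cdot}_{\bar\phi}$ on $X^{n-1}$ with the restriction of $\vertiii{\cdot}_\phi$ to vectors whose last coordinate is $0_X$, which underpins parts \eqref{P6.5-1} and \eqref{P6.5-2}.
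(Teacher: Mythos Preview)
Your proposal is correct and follows essentially the same approach as the paper: both rely on the identification $\vertiii{(x_1,\ldots,x_{n-1})}_{\bar\phi}=\vertiii{(x_1,\ldots,x_{n-1},0_X)}_\phi$ together with Proposition~\ref{L3.2} applied to $\vertiii{\cdot}_\phi$, $\vertiii{\cdot}_{\bar\phi}$, and $\vertiii{\cdot}_{\phi^*}$. The only cosmetic difference is that for parts \eqref{P6.5-6}--\eqref{P6.5-7} you invoke Proposition~\ref{P3.2}\;\eqref{P3.2-1} (so that $\vertiii{\cdot}_{\phi^*}\in{\rm \textbf{N}}_{(X^n)^*}$ and Proposition~\ref{L3.2}\;\eqref{L3.2-2} applies directly), whereas the paper routes through Proposition~\ref{P7.1}\;\eqref{P7.1-6}--\eqref{P7.1-7}; your route is slightly more direct but the content is the same.
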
	

\begin{proof}
	Observe that
	\begin{gather}\label{P5.4-1}
		\vertiii{(x_1,\ldots,x_{n-1})}_{\bar\phi}=\vertiii{(x_1,\ldots,x_{n-1},0_X)}_\phi
		\;\text{for all}\;(x_1,\ldots,x_{n-1})\in X^{n-1}.
	\end{gather}	
	Thus,  \eqref{P6.5-1} and \eqref{P6.5-2} follow from Proposition~\ref{L3.2}\;\eqref{L3.2-0} and property \eqref{A2} of $\vertiii{\cdot}_\phi$, while 
	\eqref{P6.5-3} and \eqref{P6.5-4} are consequences of
	Proposition~\ref{L3.2}\;\eqref{L3.2-2}.
	Condition \eqref{A2} clearly yields \eqref{P6.5-5}.
	Finally,   \eqref{P6.5-6} and \eqref{P6.5-7} follow from Proposition~\ref{L3.2}\;\eqref{L3.2-2} and Proposition~\ref{P7.1}\;\eqref{P7.1-6} and \eqref{P7.1-7}.
\end{proof}	
\if{
	By Proposition~\ref{P6.5}\;\eqref{P6.5-1} and \eqref{P6.5-2}, we can choose a norm on $X^{n-1}$
	that has the same structure as the norm on  $X^n$ and satisfies conditions \eqref{C1} and \eqref{C2}.
}\fi
\begin{proposition}\label{C6.5}
	Let $\psi,\phi\in\pmb{\Psi}_n$, and $\bar\phi$ be defined by \eqref{varphi}.
	Then there exists a $\kappa>0$ such that
		\begin{gather*}%\label{C6.7-0}
			\max\{\vertiii{(x_1,\ldots,x_{n-1})}_{\bar\phi}, \vertiii{(x_n,\ldots,x_{n})}_{\bar\phi},\|x_n\|\}\le \kappa\cdot\vertiii{x}_\psi
		\end{gather*}	
	for all $x:=(x_1,\ldots,x_n)\in X^n$.
\end{proposition}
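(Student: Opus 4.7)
The plan is to bound each of the three quantities on the left-hand side by a constant multiple of $\vertiii{x}_\phi$ using the already-established Proposition~\ref{P6.5}, and then convert the resulting bound from $\vertiii{\cdot}_\phi$ into a bound in terms of $\vertiii{\cdot}_\psi$ by invoking the norm comparison established in Proposition~\ref{P7.1}.

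\medskip

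First, I would apply Proposition~\ref{P6.5}\;\eqref{P6.5-1} and Proposition~\ref{P6.5}\;\eqref{P6.5-2} to the norm $\vertiii{\cdot}_\phi$ to obtain the two estimates
\begin{gather*}
\max\{\vertiii{(x_1,\ldots,x_{n-1})}_{\bar\phi},\;\vertiii{(x_n,\ldots,x_n)}_{\bar\phi}\}\le (n-1)\vertiii{x}_\phi,\\
\|x_n\|\le \vertiii{x}_\phi.
\end{gather*}
Since $n\ge 2$, taking the maximum of the two inequalities yields the single bound
\begin{gather*}
\max\{\vertiii{(x_1,\ldots,x_{n-1})}_{\bar\phi},\;\vertiii{(x_n,\ldots,x_n)}_{\bar\phi},\;\|x_n\|\}\le (n-1)\vertiii{x}_\phi.
\end{gather*}

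\medskip

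Next, to replace $\vertiii{\cdot}_\phi$ by $\vertiii{\cdot}_\psi$, I would use Proposition~\ref{P7.1}\;\eqref{P7.1-6} (applied with the roles of $\psi$ and $\phi$ set so that we extract an upper bound on $\vertiii{\cdot}_\phi$): setting $m:=\min_{t\in\Omega_n}\frac{\psi(t)}{\phi(t)}$, we have $m\vertiii{\cdot}_\phi\le\vertiii{\cdot}_\psi$, and hence $\vertiii{x}_\phi\le\tfrac{1}{m}\vertiii{x}_\psi$. By Proposition~\ref{P7.1}\;\eqref{P7.1-0} we have $m\ge 1/n$, so $1/m\le n$ and therefore $\vertiii{x}_\phi\le n\vertiii{x}_\psi$.

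\medskip

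Combining these two steps produces
\begin{gather*}
\max\{\vertiii{(x_1,\ldots,x_{n-1})}_{\bar\phi},\;\vertiii{(x_n,\ldots,x_n)}_{\bar\phi},\;\|x_n\|\}\le n(n-1)\vertiii{x}_\psi,
\end{gather*}
so taking $\kappa:=n(n-1)$ (or, more sharply, $\kappa:=(n-1)/m$) completes the proof. There is no genuine obstacle here; the statement is essentially a bookkeeping exercise that combines the three-quantity componentwise estimates of Proposition~\ref{P6.5} with the two-sided norm equivalence of Proposition~\ref{P7.1}, the only nuance being the observation that $n\ge 2$ lets us absorb the constant $1$ in front of $\|x_n\|$ into the constant $n-1$ in front of the other two terms.
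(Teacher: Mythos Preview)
Your proof is correct and follows essentially the same route as the paper: both combine the componentwise estimates of Proposition~\ref{P6.5} with the norm equivalence of Proposition~\ref{P7.1}\;\eqref{P7.1-6}. The only organizational difference is that the paper bounds $\|x_n\|$ and $\vertiii{(x_n,\ldots,x_n)}_{\bar\phi}$ directly by $\vertiii{x}_\psi$ (via Proposition~\ref{P6.5}\;\eqref{P6.5-2} applied to $\psi$ and condition~\eqref{A2}), reserving the $\phi$-to-$\psi$ conversion for the term $\vertiii{(x_1,\ldots,x_{n-1})}_{\bar\phi}$ alone, which yields the slightly sharper constant $\kappa=\max\{1/m,\,n-1\}$ instead of your $(n-1)/m$.
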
	

\begin{proof}
	Let $x:=(x_1,\ldots,x_n)\in X^n$.
	By Proposition~\ref{P6.5}\;\eqref{P6.5-2}, $\|x_n\|\le\vertiii{x}_{\psi}$.
	By Proposition~\ref{P6.6}, $\bar\phi\in\pmb{\Psi}_{n-1}$.
	Thanks to  \eqref{A2}, $\vertiii{(x_n,\ldots,x_{n})}_{\bar\phi}\le (n-1)\|x_n\|\le (n-1)\cdot\vertiii{x}_{\psi}$.
	By  \eqref{P5.4-1} and Proposition~\ref{P7.1}\;\eqref{P7.1-6}, there exists a $M>0$ such that  $\vertiii{(x_1,\ldots,x_{n-1})}_{\bar\phi}\le\vertiii{\cdot}_\phi\le M\cdot\vertiii{x}_{\psi}$.
	Thus, the desired inequality holds with $\kappa:=\max\{M,n-1\}$.
\end{proof}

\begin{remark}\label{R8}
	Proposition~\ref{C6.5} offers flexibility in selecting norms when proving and formulating generalized separation results for collections of sets.
	A natural approach is to choose norms $\|\cdot\|_{\psi}$ and $\vertiii{\cdot}_\phi$ on $X^n$  associated with arbitrary functions $\psi,\phi\in\pmb{\Psi}_n$ and
	then define the norm $\vertiii{\cdot}_{\bar\phi}$ on $X^{n-1}$ with $\bar\phi$ given by \eqref{varphi}.
	This construction ensures that the norms $\|\cdot\|$, $\|\cdot\|_{\bar\phi}$ and $\vertiii{\cdot}_\psi$ satisfy all compatibility conditions \ref{C1}-\ref{C7}.
\end{remark}

\begin{theorem}[Extremal principle]\label{T6.8}
	Let $\psi\in\pmb{\Psi}_n$, $(X,\|\cdot\|)$ be Asplund, and $\Omega_1,\ldots,\Omega_n$  be closed.
	If $\{\Omega_1,\ldots,\Omega_n\}$ is extremal at $\bx$ with respect to $\vertiii{\cdot}_\psi$, then,
	for any $\varepsilon>0$, there exist $x_i\in\Omega_i\cap B_\eps(\bx)$ and  $x_i^*\in N^F_{\Omega_i}(x_i)$ $(i=1,\ldots,n)$ such that
	$\|\sum_{i=1}^nx_i^*\|<\eps$ and $
	\vertiii{(x_1^*,\ldots,x_n^*)}_{\psi^*}=1.
	$
\end{theorem}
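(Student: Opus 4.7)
The plan is to reduce Theorem~\ref{T6.8} to the general extremal principle of Theorem~\ref{C6.4} by verifying its hypotheses for the norm $\vertiii{\cdot}_\psi$ on $X^n$. Since $\vertiii{\cdot}_\psi\in{\rm \textbf{N}}_{X^n}$ by Theorem~\ref{T3.10}\,\eqref{T3.10.2}, the sign-symmetric machinery developed in Sections~\ref{S3}--\ref{S4} supplies everything essentially for free.

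First, I would check the three structural hypotheses of Theorem~\ref{C6.4}. The product $\widehat\Omega=\Omega_1\times\ldots\times\Omega_n$ is closed in $X^n$ as a product of closed sets. By Proposition~\ref{L3.2}\,\eqref{L3.2-2}, the norm $\vertiii{\cdot}_\psi$ is topologically equivalent to the maximum norm on $X^n$; since $(X,\|\cdot\|)$ is Asplund, so is $(X^n,\vertiii{\cdot}_\infty)$, and consequently $(X^n,\vertiii{\cdot}_\psi)$ is Asplund, because the Asplund property is preserved under equivalent norms. Next, Proposition~\ref{P6.5}\,\eqref{P6.5-5} and \eqref{P6.5-6} show that $\vertiii{\cdot}_\psi$ satisfies the compatibility conditions \ref{C5} and \ref{C6} with constant $\kappa=n$.

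Applying Theorem~\ref{C6.4} to $\vertiii{\cdot}_\psi$ yields, for the given $\eps>0$, a point $x:=(x_1,\ldots,x_n)\in\widehat\Omega$ with $\vertiii{x-(\bx,\ldots,\bx)}_\psi<\eps$ together with a vector $(x_1^*,\ldots,x_n^*)\in N^F_{\widehat\Omega}(x)$ satisfying $\norm{\sum_{i=1}^n x_i^*}<\eps$ and $\vertiii{(x_1^*,\ldots,x_n^*)}_{\psi^*}=1$; here the dual norm on $(X^n)^*$ is identified as $\vertiii{\cdot}_{\psi^*}$ through Theorem~\ref{T4.2} and Proposition~\ref{P3.2}\,\eqref{P3.2-1}. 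It only remains to unwind the product structure. Proposition~\ref{L3.2}\,\eqref{L3.2-2} gives $\|x_i-\bx\|\le\vertiii{x-(\bx,\ldots,\bx)}_\psi<\eps$, so $x_i\in\Omega_i\cap B_\eps(\bx)$ for each $i=1,\ldots,n$. The Fr\'echet normal cone of a product of closed sets decomposes as $N^F_{\widehat\Omega}(x)=N^F_{\Omega_1}(x_1)\times\ldots\times N^F_{\Omega_n}(x_n)$, a standard fact that follows by testing the defining limsup against variations of the form $(x_1,\ldots,y_i,\ldots,x_n)$ and using properties \eqref{A1}--\eqref{A2} to recover single-coordinate quotients. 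Hence $x_i^*\in N^F_{\Omega_i}(x_i)$ for every $i$, completing the proof. No step poses a genuine obstacle: the argument is a direct transport of hypotheses through the correspondences already established, and the novelty lies not in the extremal principle itself but in showing that \emph{every} norm in ${\rm \textbf{N}}_{X^n}$ is eligible for its application.
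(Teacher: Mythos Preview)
Your proposal is correct and follows essentially the same route as the paper's proof: reduce to Theorem~\ref{C6.4}, verify closedness of $\widehat\Omega$, the Asplund property of $(X^n,\vertiii{\cdot}_\psi)$ via equivalence with the maximum norm, the compatibility conditions \ref{C5}--\ref{C6} through Proposition~\ref{P6.5}, and then unwind the product structure using Proposition~\ref{L3.2}\,\eqref{L3.2-2} and the decomposition of the Fr\'echet normal cone. The only cosmetic difference is that the paper cites Proposition~\ref{P6.5}\,\eqref{P6.5-3}--\eqref{P6.5-4} (rather than \eqref{A1}--\eqref{A2} directly) for the normal-cone splitting and leaves the verification of \ref{C5}--\ref{C6} implicit, whereas you make it explicit.
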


\begin{proof}
	The statement is a consequence of Theorem~\ref{C6.4}.	
	By the second inequality of Proposition~\ref{L3.2}\;\eqref{L3.2-2} and the closedness of $\Omega_1,\ldots,\Omega_n$, the set  $\widehat\Omega$ is closed.
	Observe that $(X,\vertiii{\cdot}_\psi)$ is Asplund.
	Indeed, since $(X,\|\cdot\|)$ is Banach, it follows from the second inequality of Proposition~\ref{L3.2}\;\eqref{L3.2-2}  that $(X,\vertiii{\cdot}_\psi)$  is also Banach, while the Asplund propery stems from  the fact that the class of Asplund spaces is stable under Cartesian products; see \cite[p. 196]{Mor06.1}.
	By Proposition~\ref{P6.5}\;\eqref{P6.5-3} and \eqref{P6.5-4}, $N^F_{\widehat\Omega}(x)= N^F_{\Omega_1}(x_1)\times\ldots\times N^F_{\Omega_n}(x_n)$ for all $x:=(x_1,\ldots,x_n)\in\widehat\Omega$; see \cite{CuoKru24-2} for a detailed proof. 
	The first inequality in Proposition~\ref{L3.2}\;\eqref{L3.2-2} implies that $x_i\in B_\varepsilon(\bx)$ $(i=1,\ldots,n)$ whenever $(x_1,\ldots,x_n)\in B_\varepsilon(\bx,\ldots,\bx)$.
	This completes the proof.
\end{proof}	

\begin{remark}
	By Theorem~\ref{T6.8}, the extremality implies the existence of $n$ dual vectors satisfying the normalization condition $\vertiii{(x_1^*,\ldots,x_n^*)}_{\psi^*}=1$ and being normal (in the sense of Fr\'echet) to the respective sets at some points close (up to $\varepsilon$) to $\bx$, with their sum being almost (up to $\varepsilon$) zero.
\end{remark}	

 \section{The von Neumann-Jordan constant}\label{S7}
 Let $\|\cdot\|$ be a norm on a vector space $X$.
 Consider the function $\mathcal{G}_{\|\cdot\|}:X\times X\to\R$ defined by
 	\begin{gather}\label{S6.1}
 		\mathcal{G}_{\|\cdot\|}(x,y):=\dfrac{\|x+y\|^2+\|x-y\|^2}{2(\|x\|^2+\|y\|^2)}\;\text{for all}\;
 		x,y\in X\;\text{with}\; \|x\|+\|y\|>0.
 	\end{gather}	
 The von Neumann-Jordan constant ($C_{\text{NJ}}$) \cite{JorNeu35} of the norm $\|\cdot\|$ is defined by
 \begin{gather}\label{S7.1-2}
 	C_{\text{NJ}}(\|\cdot\|)=\sup_{\substack{x,y\in X,\|x\|+\|y\|>0} }\mathcal{G}_{\|\cdot\|}(x,y).
 \end{gather}	
 \begin{remark}\label{R16}
 	\begin{enumerate}[\rm (i)]
 		\item\label{R16-3}
 		By \eqref{S6.1}, $\mathcal{G}_{\|\cdot\|}(x+y,x-y)=\dfrac{1}{\mathcal{G}_{\|\cdot\|}(x,y)}$
 		for all $x,y\in X$ with $\mathcal{G}(x,y)>0$.	
 		\item\label{R16-1}
 		It is known that
 		$1\le C_{\text{NJ}}(\|\cdot\|)\le 2$
 		for every  Banach space $(X,\|\cdot\|)$; see, e.g., \cite{KatTak97}.
 		The completeness assumption can be omitted.
 		Indeed, condition $C_{\text{NJ}}(\|\cdot\|)\ge 1$ is a consequence of \eqref{S7.1-2} by letting $x=y$, while condition $C_{\text{NJ}}(\|\cdot\|)\le 2$ follows from the estimate
 		$\|x+y\|^2+\|x-y\|^2\le4(\|x\|^2+\|y\|^2)$ for all $x,y\in X$.
 		\item\label{R16-2}
 		$C_{\text{\rm NJ}}(X)=1$ if and only if $(X,\|\cdot\|)$ is a Hilbert space; cf.  \cite{KatTak97}.
 		In this case, we have $\|x+y\|^2+\|x-y\|^2=2(\|x\|^2+\|y\|^2)$ for all $x,y\in X$.
 	\end{enumerate}
 \end{remark}	
 
 \subsection{The $C_{\text{NJ}}$ for sign-symmetric norms}
 We study the von Neumann-Jordan constant of the primal and dual norms of the form  \eqref{T3.10-1}.
 Hereafter, we assume that $\psi,\phi\in \pmb{\Psi}_n$, and the constants $m$ and $M$, $m^*$ and $M^*$  are given by \eqref{mM}.
 \begin{theorem}\label{T7.1}
 	The following statements hold:
 	\begin{enumerate}[\rm (i)]
 		\item\label{T7.1-1}
 		$\frac{m^2}{M^2}\cdot C_{\text{\rm NJ}}(\vertiii{\cdot}_\phi) \le C_{\text{\rm NJ}}(\vertiii{\cdot}_\psi)\le \frac{M^2}{m^2}\cdot C_{\text{\rm NJ}}(\vertiii{\cdot}_\phi);$
 		\item\label{T7.1-2}
 		$\frac{{m}^2}{{M}^2}\cdot C_{\text{\rm NJ}}(\vertiii{\cdot}_{\phi^*}) \le C_{\text{\rm NJ}}(\vertiii{\cdot}_{\psi^*})\le\frac{{M}^2}{{m}^2}\cdot C_{\text{\rm NJ}}(\vertiii{\cdot}_{\phi^*}).$
 	\end{enumerate}
 \end{theorem}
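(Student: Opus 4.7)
The plan is to deduce both parts from the two-sided norm equivalences established in Proposition~\ref{P7.1}\;\eqref{P7.1-6} and \eqref{P7.1-7}, without any appeal to the finer structure of $\psi$ or $\phi$. The key observation is the following abstract fact: whenever two norms $\|\cdot\|_1,\|\cdot\|_2$ on a vector space $Y$ satisfy $c\|\cdot\|_1\le\|\cdot\|_2\le C\|\cdot\|_1$ with constants $0<c\le C$, then for any $x,y\in Y$ with $\|x\|_2+\|y\|_2>0$ the ratio defined in \eqref{S6.1} admits
\begin{gather*}
\tfrac{c^2}{C^2}\,\mathcal{G}_{\|\cdot\|_1}(x,y)\;\le\;\mathcal{G}_{\|\cdot\|_2}(x,y)\;\le\;\tfrac{C^2}{c^2}\,\mathcal{G}_{\|\cdot\|_1}(x,y).
\end{gather*}
Each inequality follows by replacing the two squared norms in the numerator of \eqref{S6.1} with their upper (resp.\ lower) $\|\cdot\|_1$-bound, and the two squared norms in the denominator with their lower (resp.\ upper) $\|\cdot\|_1$-bound, so that the constants combine as $C^2/c^2$ (resp.\ $c^2/C^2$).

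For part~\eqref{T7.1-1} I would apply this abstract fact to $\|\cdot\|_2:=\vertiii{\cdot}_\psi$ and $\|\cdot\|_1:=\vertiii{\cdot}_\phi$ with $c:=m$ and $C:=M$, invoking Proposition~\ref{P7.1}\;\eqref{P7.1-6}. Taking the supremum over all pairs $(x,y)\in X^n\times X^n$ satisfying $\vertiii{x}_\psi+\vertiii{y}_\psi>0$ then yields both bounds in \eqref{T7.1-1} via definition \eqref{S7.1-2}; note that this positivity condition is equivalent to $\vertiii{x}_\phi+\vertiii{y}_\phi>0$ because $m>0$, so the two suprema are taken over the same set of pairs.

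For part~\eqref{T7.1-2} the same abstract fact is applied to the dual norms, using Proposition~\ref{P7.1}\;\eqref{P7.1-7}, which gives $\tfrac{1}{M}\vertiii{\cdot}_{\phi^*}\le\vertiii{\cdot}_{\psi^*}\le\tfrac{1}{m}\vertiii{\cdot}_{\phi^*}$. Here the ratio of the constants $C/c=(1/m)/(1/M)=M/m$ is the same as in part~\eqref{T7.1-1}, so the coefficient $M^2/m^2$ (respectively $m^2/M^2$) appears again. Equivalently, one may phrase the bounds through $m^*$ and $M^*$ and then invoke the reciprocal identities $m\cdot M^*=M\cdot m^*=1$ from Proposition~\ref{P7.1}\;\eqref{P7.1-3} to rewrite $(M^*/m^*)^2=M^2/m^2$.

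I do not anticipate any serious obstacle: the proof is essentially a bookkeeping exercise on the definition \eqref{S7.1-2} once Proposition~\ref{P7.1} is in hand. The only point requiring a brief justification is the matching of the supremum domains on the two sides of each inequality, which is immediate from the strict positivity of the equivalence constants $m$ and $1/M$.
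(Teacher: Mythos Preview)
Your proposal is correct and follows essentially the same route as the paper: both arguments rest entirely on the norm equivalences of Proposition~\ref{P7.1}\;\eqref{P7.1-6} and \eqref{P7.1-7}, bounding numerator and denominator of $\mathcal{G}$ separately, and for part~\eqref{T7.1-2} converting the $m^*,M^*$ bounds into $m,M$ via Proposition~\ref{P7.1}\;\eqref{P7.1-3}. The only difference is packaging---you isolate the ``if $c\|\cdot\|_1\le\|\cdot\|_2\le C\|\cdot\|_1$ then $\mathcal{G}_{\|\cdot\|_2}\le(C/c)^2\mathcal{G}_{\|\cdot\|_1}$'' step as an abstract lemma, whereas the paper writes out the same chain of inequalities inline.
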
	
 
 \begin{proof}
 	Let $x,y\in X^n$ satisfy $\vertiii{x}_\psi+\vertiii{y}_\psi>0$.
 	By the first inequality in Proposition~\ref{P7.1}\;\eqref{P7.1-6}, 
 	\begin{align*}
 		\vertiii{x+y}^2_\phi+\vertiii{x-y}^2_\phi
 		&\le\dfrac{1}{m^2}\cdot\left(\vertiii{x+y}^2_\psi+\vertiii{x-y}^2_\psi\right)
 		\le \dfrac{1}{m^2}\cdot C_{\text{\rm NJ}}(\vertiii{\cdot}_\psi)\cdot 2(\vertiii{x}^2_\psi+\vertiii{y}^2_\psi) \\
 		&\le\dfrac{M^2}{m^2}\cdot C_{\text{\rm NJ}}(\vertiii{\cdot}_\psi)\cdot 2 \left(\vertiii{x}^2_\phi+\vertiii{y}^2_\phi\right).
 	\end{align*}	
 	Thus, $\frac{m^2}{M^2}\cdot C_{\text{\rm NJ}}(\vertiii{\cdot}_\phi) \le C_{\text{\rm NJ}}(\vertiii{\cdot}_\psi)$.
 	Similarly, applying the second inequality in Proposition~\ref{P7.1}\;\eqref{P7.1-6}, we obtain $ C_{\text{\rm NJ}}(\vertiii{\cdot}_\psi)\le \frac{M^2}{m^2}\cdot C_{\text{\rm NJ}}(\vertiii{\cdot}_\phi)$.
 	Thus, assertion \eqref{T7.1-1} is proved.
 	
 	Applying Proposition~\ref{P7.1}\;\eqref{P7.1-7}  and using similar arguments as in the proof of part \eqref{T7.1-1}, we obtain 
 	$\frac{{m^*}^2}{{M^*}^2}\cdot C_{\text{\rm NJ}}(\vertiii{\cdot}_{\phi^*}) \le C_{\text{\rm NJ}}(\vertiii{\cdot}_{\psi^*})\le\frac{{M^*}^2}{{m^*}^2}\cdot C_{\text{\rm NJ}}(\vertiii{\cdot}_{\phi^*})$.
 	The desired inequalities in assertion \eqref{T7.1-2} then follow from
 	Proposition~\ref{P7.1}\;\eqref{P7.1-3}.
 \end{proof}
 
 \begin{corollary}\label{C7.6}
 	Let $C_{\text{\rm NJ}}(\vertiii{\cdot}_\phi)=C_{\text{\rm NJ}}(\vertiii{\cdot}_{\phi^*})=1$.	
 	The following statements hold:
 	\begin{enumerate}[\rm (i)]
 		\item\label{C7.6-1}
 		if $\psi\ge\phi$, then $C_{\text{\rm NJ}}(\vertiii{\cdot}_\psi)\le M^2$
 		and $C_{\text{\rm NJ}}(\vertiii{\cdot}_{\psi^*})\le M^2$;
 		\item\label{C7.6-2}
 		if $\psi\le\phi$, then $C_{\text{\rm NJ}}(\vertiii{\cdot}_\psi)\le\dfrac{1}{m^2}$ and $C_{\text{\rm NJ}}(\vertiii{\cdot}_{\psi^*})\le\dfrac{1}{m^2}$;
 		\item\label{C7.6-3}
 		if $n=2$, then the inequalities in assertions \eqref{C7.6-1} and \eqref{C7.6-2} hold as equalities.
 	\end{enumerate}	
 \end{corollary}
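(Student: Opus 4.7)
The upper bounds in \eqref{C7.6-1} and \eqref{C7.6-2} are already contained in Theorem~\ref{T7.1}: the hypothesis $C_{\text{NJ}}(\vertiii{\cdot}_\phi)=C_{\text{NJ}}(\vertiii{\cdot}_{\phi^*})=1$ together with Theorem~\ref{T7.1} gives $C_{\text{NJ}}(\vertiii{\cdot}_\psi),C_{\text{NJ}}(\vertiii{\cdot}_{\psi^*})\le M^2/m^2$, and $M^2/m^2\le M^2$ when $m\ge 1$ (case \eqref{C7.6-1}), while $M^2/m^2\le 1/m^2$ when $M\le 1$ (case \eqref{C7.6-2}). The substance of \eqref{C7.6-3} is therefore to produce matching \emph{lower} bounds for $n=2$ by exhibiting test pairs in $X^2$ and in $(X^*)^2$ that attain the claimed values of $\mathcal{G}_{\vertiii{\cdot}_\psi}$ and $\mathcal{G}_{\vertiii{\cdot}_{\psi^*}}$.

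The pivotal step is a rigidity observation: by Remark~\ref{R16}\;\eqref{R16-2}, $C_{\text{NJ}}(\vertiii{\cdot}_\phi)=1$ makes $\vertiii{\cdot}_\phi$ a Hilbert norm on $X^2$, so the parallelogram identity holds unconditionally. Applying it to the axis-aligned pair $x_t:=(t_1\mathbf{u},0_X)$ and $y_t:=(0_X,t_2\mathbf{v})$ with arbitrary $\mathbf{u},\mathbf{v}\in\mathbb{S}_X$ and $t=(t_1,t_2)\in\Omega_2$, and using \eqref{A1}--\eqref{A2}, forces $\phi(t)=\sqrt{t_1^2+t_2^2}$ on $\Omega_2$; in particular $(X,\|\cdot\|)$ is a Hilbert space and, by Example~\ref{E4.8}, $\phi^*=\phi$. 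Evaluating the same test pairs through $\vertiii{\cdot}_\psi$ gives
\begin{equation*}
\mathcal{G}_{\vertiii{\cdot}_\psi}(x_t,y_t)=\frac{2\psi(t)^2}{2(t_1^2+t_2^2)}=\left(\frac{\psi(t)}{\phi(t)}\right)^2.
\end{equation*}
By compactness of $\Omega_2$ and continuity of $\psi/\phi$, there are $t^*,t_*\in\Omega_2$ at which this ratio equals $M$ and $m$ respectively, producing $\mathcal{G}_{\vertiii{\cdot}_\psi}$-values of $M^2$ and $m^2$; applying Remark~\ref{R16}\;\eqref{R16-3} to the pair $(x_{t_*}+y_{t_*},x_{t_*}-y_{t_*})$ converts the latter into $1/m^2$. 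Hence $C_{\text{NJ}}(\vertiii{\cdot}_\psi)\ge\max\{M^2,1/m^2\}$, which is $M^2$ in case \eqref{C7.6-1} (where $m\ge 1$) and $1/m^2$ in case \eqref{C7.6-2} (where $M\le 1$).

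For the dual norms I would repeat the construction verbatim, picking $\mathbf{u}^*,\mathbf{v}^*\in\mathbb{S}_{X^*}$ and setting $x^*_s:=(s_1\mathbf{u}^*,0_{X^*})$, $y^*_s:=(0_{X^*},s_2\mathbf{v}^*)$ for $s\in\Omega_2$. Since $\phi^*=\phi$, the same calculation yields $\mathcal{G}_{\vertiii{\cdot}_{\psi^*}}(x^*_s,y^*_s)=(\psi^*(s)/\phi^*(s))^2$; combining the extremes $M^*=1/m$ and $m^*=1/M$ from Proposition~\ref{P7.1}\;\eqref{P7.1-3} with the flipping identity in Remark~\ref{R16}\;\eqref{R16-3} gives $C_{\text{NJ}}(\vertiii{\cdot}_{\psi^*})\ge\max\{M^2,1/m^2\}$, matching the upper bounds case-by-case. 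The only delicate point in the whole argument is the rigidity step that pins $\phi$ down to $\psi_2$ once $C_{\text{NJ}}(\vertiii{\cdot}_\phi)=1$ is assumed with $n=2$; everything after that is mechanical evaluation of $\mathcal{G}$ on the canonical test pairs, with Remark~\ref{R16}\;\eqref{R16-3} used to turn $m^2$ into its reciprocal when needed.
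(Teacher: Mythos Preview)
Your proof is correct and follows the same overall strategy as the paper: Theorem~\ref{T7.1} for the upper bounds in \eqref{C7.6-1}--\eqref{C7.6-2}, and for $n=2$ axis-aligned test vectors $(t_1\mathbf{u},0_X)$, $(0_X,t_2\mathbf{u})$ (and their dual analogues) to realise the matching lower bounds, with Remark~\ref{R16}\;\eqref{R16-3} supplying the flip from $m^2$ to $1/m^2$.

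The one organisational difference is your rigidity step. You first deduce from $C_{\text{NJ}}(\vertiii{\cdot}_\phi)=1$ that the parallelogram identity on axis-aligned pairs forces $\phi=\psi_2$ (hence $\phi^*=\psi_2$ via Example~\ref{E4.8}), and then read off $\mathcal{G}_{\vertiii{\cdot}_\psi}(x_t,y_t)=(\psi(t)/\phi(t))^2$ directly. The paper does not identify $\phi$; it chains $\vertiii{\bar x\pm\bar y}_\psi^2=\psi(\bar t)^2=M^2\phi(\bar t)^2=\tfrac{M^2}{2}\bigl(\vertiii{\bar x+\bar y}_\phi^2+\vertiii{\bar x-\bar y}_\phi^2\bigr)$, invokes the parallelogram law for $\vertiii{\cdot}_\phi$ on that one pair, and then uses \eqref{A2} to swap $\vertiii{\bar x}_\phi,\vertiii{\bar y}_\phi$ for $\vertiii{\bar x}_\psi,\vertiii{\bar y}_\psi$; the dual lower bound is handled analogously using the separate hypothesis $C_{\text{NJ}}(\vertiii{\cdot}_{\phi^*})=1$. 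Your version is a bit more informative (it shows that for $n=2$ the assumption $C_{\text{NJ}}(\vertiii{\cdot}_{\phi^*})=1$ is actually forced by $C_{\text{NJ}}(\vertiii{\cdot}_\phi)=1$, hence redundant for the lower bounds in \eqref{C7.6-3}); the paper's version is slightly leaner since it never needs to pin down $\phi$.
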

 
 \begin{proof}	
 	Let $\psi\ge\phi$.
 	Proposition~\ref{P7.1}\;\eqref{P7.1-3} and \eqref{P7.1-4} imply that $m^*\cdot M=1$ and $M^*\le 1\le m$.
 	By Theorem~\ref{T7.1}, $C_{\text{\rm NJ}}(\vertiii{\cdot}_\psi)\le\frac{M^2}{m^2}\le M^2$ and 
 	$C_{\text{\rm NJ}}(\vertiii{\cdot}_{\psi^*})=\frac{{M^*}^2}{{m^*}^2}\le\frac{1}{{m^*}^2}=M^2$.
 	Thus, \eqref{C7.6-1} is proved.
 	The proof of \eqref{C7.6-2} is similar.
 	We now prove \eqref{C7.6-3}.
 	Suppose that $n=2$.
 	Let $\bar t:=(\bar t_1,\bar t_2)\in\Omega_2$ satisfy $M=\frac{\psi(\bar t)}{\phi(\bar t)}$,
 	$\bx:=(\bar t_1 \mathbf{u},0_X)$ and 
 	$\bar y:=(0_X,\bar t_2\mathbf{u})$ for some $\mathbf{u}\in\mathbb{S}_X$.	
 	Since $C_{\text{\rm NJ}}(\vertiii{\cdot}_\phi)=1$, we have
 	$\vertiii{\bx+\by}^2_\phi+\vertiii{\bx-\by}^2_\phi=2(\vertiii{\bx}^2_\phi+\vertiii{\by}^2_\phi).$
 	By \eqref{T3.10-1},
 	\begin{align*}
 		\vertiii{\bx+\by}^2_\psi+\vertiii{\bx-\by}^2_\psi
 		&=2\psi^2(\bar t)
 		=2M^2\phi^2(\bar t)
 		=M^2\left(\vertiii{\bx+\by}^2_\phi+\vertiii{\bx-\by}^2_\phi\right)\\
 		&=2M^2(\vertiii{\bx}^2_\phi+\vertiii{\by}^2_\phi)
 		=2M^2(\vertiii{\bx}^2_\psi+\vertiii{\by}^2_\psi).
 	\end{align*}	
 	Thus, $C_{\text{\rm NJ}}(\vertiii{\cdot}_\psi)\ge M^2$.
 	Let $\bar s:=(\bar s_1,\bar s_2)\in\Omega_2$ satisfy $m^*=\frac{\psi^*(\bar s)}{\phi^*(\bar s)}$,
 	$\bx^*:=(\bar s_1 \mathbf{u}^*,0_{X^*})$ and 
 	$\bar y^*:=(0_{X^*},\bar s_2\mathbf{u}^*)$ for some $\mathbf{u}^*\in\mathbb{S}_{X^*}$.	
 	Since $C_{\text{\rm NJ}}(\vertiii{\cdot}_{\phi^*})=1$, we have
 	$\vertiii{\bx^*+\by^*}^2_{\phi^*}+\vertiii{\bx^*-\by^*}^2_{\phi^*}=2(\vertiii{\bx^*}^2_{\phi^*}+\vertiii{\by^*}^2_{\phi^*}).$
 	By \eqref{T3.10-1},
 	\begin{align*}
 		2(\vertiii{\bx^*}^2_{\psi^*}+\vertiii{\by^*}^2_{\psi^*})
 		&=2(\vertiii{\bx^*}^2_{\phi^*}+\vertiii{\by^*}^2_{\phi^*})
 		=\vertiii{\bx^*+\by^*}^2_{\phi^*}+\vertiii{\bx^*-\by^*}^2_{\phi^*}\\
 		&=2{\phi^*}^2(\bar s)=2M^2{\psi^*}^2(\bar s)
 		=M^2\left(\vertiii{\bx^*+\by^*}^2_{\psi^*}+\vertiii{\bx^*-\by^*}^2_{\psi^*}\right).
 	\end{align*}	
 	Thus,
 	$C_{\text{\rm NJ}}(\vertiii{\cdot}_{\psi^*})\ge M^2$.
 	The inequalities $C_{\text{\rm NJ}}(\vertiii{\cdot}_\psi)\ge \dfrac{1}{m^2}$ and $C_{\text{\rm NJ}}(\vertiii{\cdot}_{\psi^*})\ge \dfrac{1}{m^2}$ are proved similarly.
 	The proof is complete.
 \end{proof}	
 
 \begin{remark}
 	When $X:=\mathbb{C}$, the primal norm estimates in Corollary~\ref{C7.6} recaptures \cite[Remark~5.3]{SaiKatTak00} and \cite[Theorems~2.3 \& 2.4]{Cao03}.
 	The assertions in part \eqref{C7.6-3} of Corollary~\ref{C7.3} may fail when $n\ge 3$; see
 	\cite[p. 904]{SaiKatTak00} for a counterexample.
 \end{remark}
 
 \subsection{A result of Clarkson revisited} We extend Clarkson's result  \cite{Cla37} from Lebesgue spaces to general normed vector spaces.
 
 \begin{definition}\label{D7.4}
 	A norm  $\|\cdot\|$ on a vector space $X$ is said to satisfy the \textit{Clarkson inequality} if
 		\begin{gather*}%\label{D7.4-1}
 			\|x+y\|^\be+\|x-y\|^\be\le 2(\|x\|^\al+\|y\|^\al)^{\frac{\be}{\al}}	
 		\end{gather*}	
 	for all $x,y\in X$,  $\al\in(1,2]$ and $\be\in\R$ with $\frac{1}{\al}+\frac{1}{\be}=1$. 
 \end{definition}	
 
 \begin{remark}\label{R20}
 	\begin{enumerate}[\rm (i)]
 		\item\label{R20-1}
 		In \cite[Theorem~2]{Cla36}, Clarkson proved that
 		$p$-norms on Lebesgue spaces  always satisfy the property in Definition~\ref{D7.4}.
 		To obtain this result, it is sufficient to prove that
 		$\abs{x+y}^\be+\abs{x-y}^\be\le 2(\abs{x}^\al+\abs{y}^\al)^{\frac{\be}{\al}}$
 		for all $x,y\in\mathbb{C}$, $\al\in(1,2]$ and $\be\in\R$ with $\frac{1}{\al}+\frac{1}{\be}=1$.
 		The result is then used to study the uniform convexity  \cite{Cla36} and the von  Neumann-Jordan constant \cite[Theorem, p. 114]{Cla37} of Lebesgue spaces.
 		\item
 		Clarkson's inequalities in general Banach spaces were first studied by Milman \cite{Mil84}.
 		It was later shown by Kato and Takahashi \cite{KatTak97} that a primal norm on a Banach space satisfies the \textit{Clarkson inequality} if and only if its dual  satisfies the inequality.
 		The reader is referred to \cite{PerTakKat00} and the references therein for discussions and implications of Clarkson-type inequalities.
 	\end{enumerate}
 \end{remark}	
 
 The next theorem  provides an explicit formula for the von Neumann-Jordan constant of a $p$-norm on general vector spaces.
 \begin{theorem}\label{T7.2}
 	Let $X$ be a normed space, and $\vertiii{\cdot}_p$ be defined by \eqref{pnorm} for some $p\in[1,\infty]$.
 	If the norm $\vertiii{\cdot}_p$ satisfies the Clarkson inequality, then
 		\begin{equation*}
 			C_{\text{\rm NJ}}(\vertiii{\cdot}_p)= 
 			\begin{cases}
 				2^{\frac{2-p}{p}}& \text{if } p\in[1,2],\\
 				2^{\frac{p-2}{p}}& \text{if } p\in(2,\infty].
 			\end{cases}
 		\end{equation*}
 \end{theorem}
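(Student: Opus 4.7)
The plan is to prove the formula by establishing matching upper and lower bounds. The upper bound will come from combining the Clarkson inequality with two applications of the power-mean inequality, and the lower bound will come from two elementary test pairs of vectors in $X^n$, one for $p \le 2$ and one (essentially dual) for $p \ge 2$.

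For the upper bound when $p \in (1,2]$, I would apply the Clarkson inequality to $\vertiii{\cdot}_p$ with $\al = p$ and $\be = q := \frac{p}{p-1} \ge 2$, obtaining
\begin{gather*}
	\vertiii{x+y}_p^q + \vertiii{x-y}_p^q \le 2\bigl(\vertiii{x}_p^p + \vertiii{y}_p^p\bigr)^{q/p}.
\end{gather*}
Since $q \ge 2$, the power-mean inequality applied to the pair $(\vertiii{x+y}_p, \vertiii{x-y}_p)$ yields
\begin{gather*}
	\vertiii{x+y}_p^2 + \vertiii{x-y}_p^2 \le 2^{1-2/q}\bigl(\vertiii{x+y}_p^q + \vertiii{x-y}_p^q\bigr)^{2/q},
\end{gather*}
and since $p \le 2$, the reverse direction of the same inequality applied to $(\vertiii{x}_p,\vertiii{y}_p)$ gives $(\vertiii{x}_p^p+\vertiii{y}_p^p)^{2/p} \le 2^{2/p-1}(\vertiii{x}_p^2+\vertiii{y}_p^2)$. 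Chaining these three estimates, the powers of $2$ collapse to $2^{(2-p)/p}$, yielding $\mathcal{G}_{\vertiii{\cdot}_p}(x,y) \le 2^{(2-p)/p}$. For $p \in (2,\infty)$, I would run the same three-step argument with the Clarkson inequality instead applied with $\al = q$, $\be = p$, and with the two power-mean directions swapped, producing the bound $2^{(p-2)/p}$. The endpoint cases $p \in \{1,\infty\}$ are immediate from the universal estimate $C_{\text{NJ}} \le 2$ in Remark~\ref{R16}\;\eqref{R16-1}, which already matches the claimed formula.

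For the lower bound, fix any $\mathbf{u} \in \mathbb{S}_X$ (which exists because $X$ is a normed space and $\mathbb{S}_X \ne \emptyset$). When $p \in [1,2]$, take $x := (\mathbf{u},0_X,\ldots,0_X)$ and $y := (0_X,\mathbf{u},0_X,\ldots,0_X)$; by \eqref{pnorm} and \eqref{A2} I get $\vertiii{x}_p = \vertiii{y}_p = 1$ and $\vertiii{x \pm y}_p = 2^{1/p}$ (interpreted as $1$ when $p = \infty$), whence $\mathcal{G}_{\vertiii{\cdot}_p}(x,y) = 2^{2/p-1} = 2^{(2-p)/p}$. When $p \in (2,\infty]$, take $x' := (\mathbf{u},\mathbf{u},0_X,\ldots,0_X)$ and $y' := (\mathbf{u},-\mathbf{u},0_X,\ldots,0_X)$, which are exactly $x+y$ and $x-y$ for the previous pair; Remark~\ref{R16}\;\eqref{R16-3} then gives $\mathcal{G}_{\vertiii{\cdot}_p}(x',y') = 1/\mathcal{G}_{\vertiii{\cdot}_p}(x,y) = 2^{(p-2)/p}$, which can also be verified by the direct computation $\vertiii{x'}_p = \vertiii{y'}_p = 2^{1/p}$ and $\vertiii{x' \pm y'}_p = 2$.

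I do not anticipate a serious conceptual obstacle: the Clarkson inequality is doing the Hilbertian-type comparison, and the power-mean factors merely convert between exponent $2$ and exponents $p$ and $q$. The only work is careful bookkeeping of the powers of $2$ (where a miscount by even a factor of $2^{1/p}$ would spoil the formula), and a small amount of case separation between $p \le 2$ and $p > 2$ together with the two endpoints, where the Clarkson inequality does not directly apply but the trivial bound $C_{\text{NJ}} \le 2$ happens to coincide with the claimed value.
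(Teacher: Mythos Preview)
Your proposal is correct and follows essentially the same route as the paper: the same test vectors $(\mathbf{u},0_X,\ldots,0_X)$ and $(0_X,\mathbf{u},0_X,\ldots,0_X)$ (and their sum/difference via Remark~\ref{R16}\;\eqref{R16-3}) for the lower bounds, the Clarkson inequality with $(\al,\be)=(p,q)$ or $(q,p)$ followed by two power-mean comparisons for the upper bound, and the trivial bound $C_{\text{NJ}}\le 2$ at the endpoints. The only cosmetic difference is that the paper labels the power-mean steps as ``H\"older inequality''.
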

 
 \begin{proof}
 	Let the norm $\vertiii{\cdot}_p$ satisfy the Clarkson inequality, $\bar x:=(\mathbf{u},0_X,\ldots,0_X)$ and $\bar y:=(0_X,\mathbf{u},0_X,\ldots,0_X)\in X^n$ for some $\mathbf{u}\in\mathbb{S}_X$.
 	By \eqref{S6.1} and Remark~\ref{R16}\;\eqref{R16-3},
 	\begin{gather}\label{T7.3-1}
 		\mathcal{G}_{\vertiii{\cdot}_p}(\bar x,\bar y)=2^{\frac{2-p}{p}},\;\;
 		\mathcal{G}_{\vertiii{\cdot}_p}(\bar x+\bar y,\bar x-\bar y)=\dfrac{1}{\mathcal{G}_{\vertiii{\cdot}_p}(\bar x,\bar y)}=2^{\frac{p-2}{p}}.
 	\end{gather}	
 	We consider two cases.\\
 	\textbf{Case 1}: $p\in[1,2]$. 
 	By the first equality in \eqref{T7.3-1},  $C_{\text{NJ}}(\vertiii{\cdot}_p)\ge 2^{\frac{2-p}{p}}$.
 	Thus, $C_{\text{NJ}}(\vertiii{\cdot}_{1})=2$.
 	It suffices to prove $C_{\text{NJ}}(\vertiii{\cdot}_p)\le 2^{\frac{2-p}{p}}$ for  $p\in(1,2]$.
 	Set $q:=\frac{p}{p-1}\ge 2$.
 	Let $x,y\in X^n$ with $\vertiii{x}_p+\vertiii{y}_p>0$.
 	By the Clarkson inequality with $\al:=p$ and  $\be:=q$, we obtain
 	$
 	\vertiii{x+y}^q_p+\vertiii{x-y}^q_p\le 
 	2(\vertiii{x}^p_p+\vertiii{y}^p_p)^{\frac{q}{p}}.
 	$
 	From this and the H\"older inequality,
 	\begin{align*}
 		\vertiii{x+y}^2_p+\vertiii{x-y}^2_p
 		&\le2^{\frac{q-2}{q}}\cdot(\vertiii{x+y}^q_p+\vertiii{x-y}^q_p)^{\frac{2}{q}}
 		\le2\left(\vertiii{x}^p_p+\vertiii{y}^p_p\right)^{\frac{2}{p}}\\
 		&\le 2\left[2^{\frac{2-p}{2}}(\vertiii{x}^2_p+\vertiii{y}^2_p)^{\frac{p}{2}}\right]^{\frac{2}{p}}
 		=2(\vertiii{x}^2_p+\vertiii{y}^2_\psi)\cdot2^{\frac{2-p}{p}}.
 	\end{align*}	
 	Thus, $C_{\text{NJ}}(\vertiii{\cdot}_p)\le 2^{\frac{2-p}{p}}$.\\
 	\textbf{Case 2}: $p\in(2,\infty]$. 
 	By the second equality in \eqref{T7.3-1}, $C_{\text{NJ}}(\vertiii{\cdot}_p)\ge 2^{\frac{p-2}{p}}$.
 	Thus, $C_{\text{NJ}}(\vertiii{\cdot}_{\infty})=2$.
 	It suffices to show that $C_{\text{NJ}}(\vertiii{\cdot}_p)\le 2^{\frac{2-p}{p}}$ for  $p\in(2,\infty)$.
 	Set $q:=\frac{p}{p-1}\in(1,2)$.
 	Using similar arguments as in \textbf{Case 1} with $p$ and $q$ interchanged, we obtain 
 	$C_{\text{NJ}}(\vertiii{\cdot}_p)\le 2^{\frac{2-q}{q}}= 2^{\frac{2-p}{p}}$.
 \end{proof}	
 
 \begin{corollary}\label{C7.3}
 	Let $X$ be a normed space, $\vertiii{\cdot}_p$ be defined by \eqref{pnorm} for some $p\in[1,\infty]$, and
 	$q\in[1,\infty]$ satisfy $\frac{1}{p}+\frac{1}{q}=1$.
 	If the norm $\vertiii{\cdot}_p$ satisfies the Clarkson inequality, then
 		\begin{gather*}
 			C_{\text{\rm NJ}}(\vertiii{\cdot}_p)=C_{\text{\rm NJ}}(\vertiii{\cdot}_{q})=2^{\frac{2}{\min\{p,q\}}-1}.
 		\end{gather*}
 \end{corollary}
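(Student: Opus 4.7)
The plan is to derive the value of $C_{\text{\rm NJ}}(\vertiii{\cdot}_p)$ directly from Theorem~\ref{T7.2}, rewrite it uniformly in terms of $\min\{p,q\}$, and then use the Clarkson-inequality duality recalled in Remark~\ref{R20}\;\eqref{R20-1} to transfer the result from $\vertiii{\cdot}_p$ to $\vertiii{\cdot}_q$ and thereby establish the second equality.

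First I would verify the closed form $2^{2/\min\{p,q\}-1}$ by splitting into the two cases already handled in Theorem~\ref{T7.2}. If $p\in[1,2]$, the conjugate exponent satisfies $q\ge 2$, so $\min\{p,q\}=p$, and Theorem~\ref{T7.2} gives
\[
C_{\text{\rm NJ}}(\vertiii{\cdot}_p)=2^{(2-p)/p}=2^{2/p-1}=2^{2/\min\{p,q\}-1}.
\]
If instead $p\in(2,\infty]$, then $q\in[1,2)$ and $\min\{p,q\}=q$, while the relation $1/q=(p-1)/p$ converts the other branch of Theorem~\ref{T7.2} into
\[
C_{\text{\rm NJ}}(\vertiii{\cdot}_p)=2^{(p-2)/p}=2^{2(p-1)/p-1}=2^{2/q-1}=2^{2/\min\{p,q\}-1},
\]
with the extended-real convention $1/\infty=0$ covering the endpoints.

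For the second equality, I would invoke the Kato--Takahashi duality stated in Remark~\ref{R20}\;\eqref{R20-1}: the Clarkson inequality holds for a norm if and only if it holds for its dual norm. By Example~\ref{E4.8}, the dual of $\vertiii{\cdot}_p$ coincides with $\vertiii{\cdot}_q$, so $\vertiii{\cdot}_q$ also satisfies the Clarkson inequality. Hence Theorem~\ref{T7.2} applies with $q$ in place of $p$, and repeating the case analysis above yields $C_{\text{\rm NJ}}(\vertiii{\cdot}_q)=2^{2/\min\{q,p\}-1}$; symmetry of $\min\{p,q\}$ then gives $C_{\text{\rm NJ}}(\vertiii{\cdot}_p)=C_{\text{\rm NJ}}(\vertiii{\cdot}_q)$. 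The only subtle point is the use of the Kato--Takahashi duality outside a Banach setting, which I would handle by passing to the completion of $X$: the Clarkson inequality is a pointwise norm identity preserved under isometric embedding, the dual space is unchanged, and $C_{\text{\rm NJ}}$ is stable under this completion since the supremum in \eqref{S7.1-2} is attained on a dense subset.
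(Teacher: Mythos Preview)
Your case split and direct application of Theorem~\ref{T7.2} to obtain $C_{\text{\rm NJ}}(\vertiii{\cdot}_p)=2^{2/\min\{p,q\}-1}$ is exactly what the paper does. For the second equality the paper simply applies Theorem~\ref{T7.2} once more, now with $q$ in the role of $p$, and reads off $C_{\text{\rm NJ}}(\vertiii{\cdot}_q)=2^{2/\min\{p,q\}-1}$ by the same arithmetic; it does not pass through any duality argument.

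The place where your proposal deviates, the Kato--Takahashi step, has a genuine gap. Example~\ref{E4.8} gives $\psi_p^*=\psi_q$, so the \emph{dual} of $\vertiii{\cdot}_p$ is the $q$-norm on $(X^n)^*\cong(X^*)^n$, built from the component norm $\|\cdot\|_{X^*}$. The norm $\vertiii{\cdot}_q$ in the corollary, however, is the $q$-norm on $X^n$ defined via \eqref{pnorm}, built from $\|\cdot\|_X$. These live on different spaces and are not the same norm in general, and your completion argument does not address this primal/dual mismatch. Hence Kato--Takahashi only transfers the Clarkson inequality to the $q$-norm on $(X^*)^n$, not to the $\vertiii{\cdot}_q$ of the statement. (Minor point: the duality result you cite is item~(ii) of Remark~\ref{R20}, not item~\eqref{R20-1}.) If you wish to mirror the paper's argument, drop the duality detour and invoke Theorem~\ref{T7.2} for $\vertiii{\cdot}_q$ directly, as the paper does.
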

 \begin{proof}
 	Let the norm $\vertiii{\cdot}_p$ satisfy  the Clarkson inequality, and $q\in[1,\infty]$ with $\frac{1}{p}+\frac{1}{q}=1$.
 	We use Theorem~\ref{T7.2} in the remainder of the proof.
 	Observe that $C_{\text{\rm NJ}}(\vertiii{\cdot}_{1})=C_{\text{\rm NJ}}(\vertiii{\cdot}_{\infty})=2=2^{\frac{1}{\min\{1,\infty\}}-1}$.
 	If $p\in(1,2]$, then $q=\frac{p}{p-1}\ge 2$.
 	In this case, $p=\min\{p,q\}$ and $C_{\text{\rm NJ}}(\vertiii{\cdot}_{q})=2^{1-\frac{2}{q}}=2^{\frac{2}{p}-1}=C_{\text{\rm NJ}}(\vertiii{\cdot}_p)$.
 	If $p\in(2,\infty)$, then $q=\frac{p}{p-1}\in(1,2)$.
 	In this case, $q=\min\{p,q\}$ and $C_{\text{\rm NJ}}(\vertiii{\cdot}_{q})=2^{\frac{2}{q}-1}=2^{1-\frac{2}{p}}=C_{\text{\rm NJ}}(\vertiii{\cdot}_p).$
 	The proof is complete.
 \end{proof}	
 \begin{remark}
 	When $X$ is a Lebesgue space, Corollary~\ref{C7.3} recaptures \cite[Theorem, p. 114]{Cla37}. 
 	In this case,  $C_{\text{\rm NJ}}(\vertiii{\cdot}_{2})=1$.
 \end{remark}	

\section{Conclusions}\label{S8}
A class of sign-symmetric (strictly convex) norms on product spaces is studied.
Correspondences between a class of  sign-symmetric norms (strictly convex)  and 
a family of (strictly) convex continuous functions  are established.
Explicit formulas for the dual norm and the convex subdifferential of a primal sign-symmetric norm are provided.
It is demonstrated that these norms are well-suited for formulating and proving the extremal principle and other generalized separation results.
As an application, the von Neumann–Jordan constant of norms on product spaces is investigated, and a classical result of Clarkson is revisited.

The following topics will be studied in the future.
\begin{enumerate}[\rm (i)] 
	\item
	Study uniform convexity relations between the class of sign-symmetric norms and the corresponding class of convex continuous functions.
	\item 
	Corollary~\ref{C7.3} shows that the von Neumann–Jordan constants of a $p$-norm and its dual are equal (under the Clarkson inequality assumption).
	By Corollary~\ref{C7.6}\;\eqref{C7.6-3}, the constants of a given norm of the form \eqref{T3.10-1} and its dual are also equal when $n = 2$. 
	Does the equality hold for any norm of the form \eqref{T3.10-1}  and its dual when $n\ge 3$?
	\item
	Study sign-symmetric norms on product spaces where the component normed spaces are distinct.
\end{enumerate}

\section*{Acknowledgments}
	The author wishes to thank Professor Alexander Kruger for  his comments and the anonymous referee for suggestions which helped us improve the manuscript.

\section*{Funding}
This work is supported by the Ministry of Education and Training of Vietnam under Grant No. B2024-CTT-03.

\section*{Disclosure statement}
The author reports there are no competing interests to declare.

\section*{Data availability statement}
Data sharing is not applicable to this article as no new data were created or analyzed in this study.

\section*{ORCID}
Nguyen Duy Cuong http://orcid.org/0000-0003-2579-3601

\end{document}